\numberwithin{equation}{section}
\theoremstyle{plain}
\newtheorem{thm}{Theorem}[section]
\newtheorem{prop}[thm]{Proposition}
\newtheorem{lem}[thm]{Lemma}
\newtheorem{cor}[thm]{Corollary}
\theoremstyle{definition}
\newtheorem{defn}[thm]{Definition}
\theoremstyle{remark}
\newtheorem{rk}[thm]{Remark}
\newcommand{\ve}{\varepsilon}
\newcommand{\mc}[1]{\mathcal{#1}}
\newcommand{\Diff}{\operatorname{Diff}}
\newcommand{\tr}{\operatorname{tr}}
\newcommand{\grd}{\operatorname{grad}}
\newcommand{\Ric}{\mathrm{Ric}}
\newcommand{\dist}{\mathrm{dist}}
\newcommand{\loc}{\mathrm{loc}}
\newcommand{\Rm}{\mathrm{Rm}}
\newcommand{\Div}{\operatorname{div}}
\newcommand{\csum}{\mathbin{\#}}
\newcommand{\current}[1]{\llbracket #1\rrbracket}
\DeclareFontFamily{U}{MnSymbolC}{}
\DeclareSymbolFont{MnSyC}{U}{MnSymbolC}{m}{n}
\DeclareFontShape{U}{MnSymbolC}{m}{n}{
	<-6>  MnSymbolC5
	<6-7>  MnSymbolC6
	<7-8>  MnSymbolC7
	<8-9>  MnSymbolC8
	<9-10> MnSymbolC9
	<10-12> MnSymbolC10
	<12->   MnSymbolC12}{}
\DeclareMathSymbol{\intprod}{\mathbin}{MnSyC}{'270}
\begin{document}
\title[The Positive Mass Theorem with Arbitrary Ends]
{The Positive Mass Theorem with Arbitrary Ends}
\author{Martin Lesourd, Ryan Unger, and Shing-Tung Yau}

\address{Black Hole Initiative, Harvard University, Cambridge, MA 02138}
\email{mlesourd@fas.harvard.edu}

\address{Department of Mathematics, Princeton University, Princeton, NJ 08544}
\email{runger@math.princeton.edu}

\address{Department of Mathematics, Harvard University, Cambridge, MA 02138}
\email{yau@math.harvard.edu}
\maketitle

\begin{abstract}
We prove a Riemannian positive mass theorem for manifolds with a single asymptotically flat end, but otherwise arbitrary other ends, which can be incomplete and contain negative scalar curvature. The incompleteness and negativity is compensated for by large positive scalar curvature on an annulus, in a quantitative fashion. In the complete noncompact case with nonnegative scalar curvature, we have no extra assumption and hence prove a long-standing conjecture of Schoen and Yau. 

\end{abstract}

\tableofcontents

\section{Introduction}
%An $*$-asymptotically flat manifold $(M^{n+1},g)$ is a complete manifold with a collection of `ends' $\{\mathcal{E}_i\}$, each  diffeomorphic to $\mathbb{R}^{n+1}\backslash B$ and each endowed with a chart such that $g$ approaches the flat metric at a rate specified by $*$. The arguments encountered in the literature are sensitive to the specific decay rate $*$ and we will be explicit about this below. \\ \indent
The Riemannian positive mass theorem is a landmark result in geometric analysis and the study of scalar curvature.

\begin{thm}[Schoen--Yau \cite{SY79PMT}]\label{pmt}
    Let $(M^3,g)$ be a complete asymptotically Schwarzschild manifold with finitely many ends which are all asymptotically Schwarzschild. Then the mass of each end is nonnegative.
\end{thm}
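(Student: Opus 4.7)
The plan is to follow the classical Schoen--Yau strategy by contradiction: assume the mass $m < 0$ on some end. The overall shape of the argument is to construct a complete, properly embedded, oriented, two-sided stable minimal surface $\Sigma^2 \hookrightarrow (M,g)$ that is asymptotic to a coordinate plane in the negative-mass end, and then derive a contradiction from the stability inequality combined with Gauss--Bonnet.

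First I would perform a small conformal perturbation to arrange $R_g \geq 0$ with strict inequality outside a compact set, while changing the mass so little that it remains negative. Next, I would solve a Plateau problem inside a large coordinate cylinder $\{|x^1|,|x^2| \leq R,\, |x^3| \leq R\}$ in the negative-mass chart, prescribing boundary data on the sides so that any area-minimizer is forced to look like the graph of a function over $\{x^3 = 0\}$. The crucial input is that under the asymptotically Schwarzschild assumption with $m<0$, the mean curvature of large coordinate planes $\{x^3 = c\}$ has a definite sign, providing barriers that prevent minimizers from slipping off the top or bottom of the cylinder. Letting $R \to \infty$ and extracting a subsequential limit using uniform curvature and area estimates yields a complete, properly embedded, area-minimizing surface $\Sigma$ whose asymptotic geometry is that of a Euclidean plane in the chart at infinity.

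The second major step is to exploit stability and topology. Because $\Sigma$ is asymptotic to a plane in an asymptotically Schwarzschild metric, $\Sigma$ has finite topology and finite total curvature, and the Cohn--Vossen-type estimate gives $\int_\Sigma K_\Sigma \leq 2\pi$. Plugging $\phi \equiv 1$ (in the limiting sense, via cutoffs) into the stability inequality and using the Gauss equation to rewrite $|A|^2 + \Ric(\nu,\nu) = \tfrac{1}{2}R_g + \tfrac{1}{2}|A|^2 - K_\Sigma$ gives
\[
\int_\Sigma \Bigl( \tfrac{1}{2}R_g + \tfrac{1}{2}|A|^2 \Bigr)\phi^2 \;\leq\; \int_\Sigma |\nabla \phi|^2 \;+\; \int_\Sigma K_\Sigma \,\phi^2.
\]
As $\phi \to 1$, the gradient term vanishes (since $\Sigma$ has quadratic area growth) and, combined with $R_g \geq 0$ and Cohn--Vossen, one finds that $\Sigma$ must be totally geodesic, flat, and isometric to $\R^2$, and that the ambient metric restricted to $\Sigma$ is flat. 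Finally, a direct computation shows that the asymptotic deficit in the Gauss--Bonnet formula at the planar end is proportional to $m$; since $m<0$, this contradicts $\int_\Sigma K_\Sigma \leq 2\pi$ with equality forced by the previous step.

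The main obstacle is the barrier construction and compactness in the first step: one must genuinely produce a complete limit surface $\Sigma$ asymptotic to a plane, rather than one that collapses into the compact part of $M$ or develops singularities in the limit. This requires using the sign of the mass in the Schwarzschild expansion to get effective barriers, together with uniform area and curvature estimates for stable minimal surfaces to pass to the limit. The conformal reduction to strict positive scalar curvature and the careful Gauss--Bonnet bookkeeping at the planar end are technically necessary but more routine.
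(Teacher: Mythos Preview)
The paper does not prove this statement; it is cited as the classical Schoen--Yau theorem. Your overall strategy---conformal perturbation to $R_g>0$ outside a compact set, Plateau problem with $m<0$ barriers from the hyperplanes $\{x^n=\pm a_0\}$, extraction of a complete stable minimal surface asymptotic to a plane, then stability plus Gauss--Bonnet---is exactly the classical argument, and is also what the paper carries out (with $\mu$-bubble modifications irrelevant here) for its more general Proposition~\ref{basicallyfinal} in the case $n=3$.

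Your endgame, however, is garbled. The mass $m$ does \emph{not} appear as an ``asymptotic deficit'' in the Gauss--Bonnet formula; the geodesic curvature of $\partial D_\sigma$ is $\sigma^{-1}+O(\sigma^{-2})$, so the boundary contribution tends to $2\pi$ independently of $m$. The negative mass enters only through the barrier construction, not the final accounting. The correct contradiction is direct: the logarithmic cutoff in the stability inequality gives
\[
\int_\Sigma K \;\ge\; \int_\Sigma \tfrac{1}{2}\bigl(R_g+|A|^2\bigr) \;>\;0
\]
strictly (since $R_g>0$ far out on $\Sigma$), while Gauss--Bonnet on $D_\sigma$ with $\chi(D_\sigma)\le 1$ and $\int_{\partial D_\sigma}k_g\to 2\pi$ forces $\int_\Sigma K\le 0$. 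The intermediate rigidity step you invoke (``$\Sigma$ must be totally geodesic, flat, isometric to $\R^2$'') does not follow from the inequalities $\int(\tfrac12 R_g+\tfrac12|A|^2)\le \int K$ and $\int K\le 2\pi$ as you state them, and even if it did, it would not produce a quantity proportional to $m$.
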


The positive mass theorem also involves a rigidity statement when the mass is zero, but we will not concern ourselves with rigidity questions in this paper. This theorem was generalized to the (now commonplace) general asymptotically flat setting in \cite{SY81ii} via the \emph{density theorem}, to dimensions $\le 7$ by Schoen in \cite{SchoenPMT} by studying the \emph{strong stability inequality}, and finally to all dimensions in \cite{SY17} using $k$-slicings and the convenient compactification technique of Lohkamp \cite{L99}.\\ \indent 
This theorem was furthermore generalized to initial data sets in \cite{SY81i} and to dimensions $\le 7$ by Eichmair--Huang--Lee--Schoen \cite{EHLS}. There are numerous other contributions to this field, particularly the case of spin manifolds, which was handled by Witten \cite{Witten}. We note that an alternative approach to the higher dimensional case has been proposed by Lohkamp \cite{L16, L17}.

In this paper, we prove a generalization of this theorem to the setting of manifolds with a distinguished asymptotically flat end and otherwise arbitrary and possibly incomplete other ends. For precision, see Definition \ref{maindef}.

\begin{thm}\label{thm1}
Let $(M^n,g)$, $3\le n\le 7$, be a complete non-compact manifold with nonnegative scalar curvature and suppose that $(M^n,g)$ has at least one asymptotically Schwarzschild end. Then the mass of the distinguished end is nonnegative. 
\end{thm}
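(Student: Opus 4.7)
The plan is to argue by contradiction, assuming the ADM mass $m$ of the distinguished end is negative, and to extract a geometric contradiction by adapting the Schoen--Yau minimal-hypersurface method to accommodate the arbitrary other ends. Recall that in the classical proof of Theorem~\ref{pmt}, large coordinate spheres of radius $r$ in an asymptotically Schwarzschild end have mean curvature $H\approx \frac{n-1}{r}-\frac{(n-2)m}{r^{n-1}}+\cdots$, so that $m<0$ makes sufficiently large coordinate spheres strict outward barriers for area-minimization in the AF end. Paired with an opposite barrier this provides, via Plateau-type minimization, a complete stable minimal hypersurface on which the Schoen--Yau stability inequality, combined with $R_g\ge0$, gives a contradiction. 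The new obstruction here is that the arbitrary ends of $(M,g)$ carry no geometric control, so area-minimizing sequences can a priori escape into them.

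To contain the minimizers, I would use $\mu$-bubbles \`a la Gromov. Fix a smooth compact exhaustion $K_1\subset K_2\subset\cdots\subset M$ such that $\partial K_j$ contains a large coordinate sphere $S_{R_j}$ of the distinguished end with $R_j\to\infty$, and such that $\Gamma_j:=\partial K_j\setminus S_{R_j}$ lies in the union of the arbitrary ends. On $K_j$, consider the variational problem of minimizing
\[
\mathcal{A}_j(\Omega)=\mathcal{H}^{n-1}(\partial^*\Omega\cap\Int K_j)-\int_\Omega h_j\,dV
\]
among Caccioppoli sets $\Omega\subset K_j$ with $S_{R_j}\subset\partial\Omega$, where $h_j$ is a smooth weight engineered so that (i) the mean curvature of $S_{R_j}$ strictly exceeds $h_j$ (possible by negativity of $m$), and (ii) $h_j\to+\infty$ as one approaches $\Gamma_j$, which forces $\Gamma_j$ to be a strict inner barrier irrespective of its geometry. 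Standard existence and regularity theory for prescribed-mean-curvature problems then yields a stable $\mu$-bubble $\Sigma_j\subset\Int K_j$, smooth off a singular set of Hausdorff dimension $\le n-8$, separating $S_{R_j}$ from $\Gamma_j$.

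With $\Sigma_j$ in hand, the stability inequality for $\mu$-bubbles, combined with $R_g\ge0$, yields a weighted Schoen--Yau inequality on $\Sigma_j$. Schoen--Simon curvature estimates give subsequential smooth convergence of $\Sigma_j$ in the AF region to a complete stable limit hypersurface $\Sigma_\infty$ of $(M,g)$, on which the classical Schoen--Yau endgame applies: Gauss--Bonnet in dimension three, and the iterated slicing of Schoen \cite{SchoenPMT} or the $k$-slicing of \cite{SY17} in dimensions four through seven. In each case, negative mass contradicts the constraints imposed by stability and nonnegative scalar curvature.

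The main obstacle, and where the bulk of the technical work certainly lies, is the second step: engineering $h_j$ and $K_j$ so that (a) a nontrivial minimizer exists at each stage, (b) $\Sigma_j$ stays uniformly away from $\Gamma_j$ as $j\to\infty$, and (c) the blow-up of $h_j$ near $\Gamma_j$ does not degrade the weighted stability inequality in the AF region where the contradiction is extracted. Since the arbitrary ends admit no a priori geometric inner barrier, one relies entirely on the divergence of $h_j$ near $\Gamma_j$; the delicate balance is to let $h_j$ blow up only in a thin collar of $\Gamma_j$ so that away from that collar $\Sigma_j$ behaves as a genuine minimal hypersurface and the negative-mass contradiction can be executed there.
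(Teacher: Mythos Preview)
Your proposal has the right high-level instinct---use a $\mu$-bubble weight to keep minimizers out of the arbitrary ends---but the variational problem you set up is the wrong shape, and the endgame you invoke does not apply to its output.

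The paper (following Schoen \cite{SchoenPMT}) does \emph{not} minimize among sphere-like separating hypersurfaces. It solves a Plateau-type Dirichlet problem: the prescribed boundary is a round $(n-2)$-sphere $\Gamma_{\sigma,a}=\{|x'|=\sigma,\ x^n=a\}$ sitting in a coordinate hyperplane of the asymptotically Schwarzschild end. The negativity of $m$ is used not to make coordinate spheres into barriers (large coordinate spheres are mean convex regardless of the sign of $m$), but to make the coordinate \emph{hyperplanes} $\{x^n=\pm a_0\}$ strictly mean convex (Lemma~\ref{sandwich}), trapping the Plateau solutions in a horizontal slab. The $\mu$-bubble weight $h$ is identically zero in the asymptotic region and blows down to $-\infty$ only in a fixed collar near the core, so in $\mathcal E$ the solutions are genuinely area-minimizing. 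A height-picking argument (Lemma~\ref{lem:heights}) then yields \emph{strong} stability. Sending $\sigma\to\infty$ produces a complete hypersurface $\Sigma_\infty$ that is asymptotically planar---a graph $x^n=u(x')$ over the hyperplane---and hence carries an asymptotically flat induced metric with zero mass (Corollary~\ref{SigmaAF}). Only then does the Schoen--Yau endgame apply: Gauss--Bonnet for $n=3$, and for $4\le n\le 7$ a conformal change on $\Sigma_\infty$ to produce an asymptotically flat $(n-1)$-manifold with strictly negative mass, contradicting the inductive hypothesis.

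Your $\Sigma_j$, by contrast, is a closed hypersurface separating a coordinate sphere $S_{R_j}$ from the bad ends. Even granting existence and convergence (neither of which is clear---there is no mechanism preventing $\Sigma_j$ from drifting to infinity or collapsing), the limit would be closed or at best compact-with-boundary, not asymptotically planar. The Schoen--Yau contradiction machinery requires an \emph{open} hypersurface extending to infinity in the AF end so that one can speak of its ADM mass (or, in dimension three, compute the boundary geodesic-curvature integral). A closed stable minimal hypersurface in a manifold with $R_g\ge 0$ is not a contradiction. You also have a sign error: with your convention $-\int_\Omega h_j$, taking $h_j\to+\infty$ near $\Gamma_j$ rewards including that region and pushes $\partial\Omega$ toward $\Gamma_j$ rather than away from it. Finally, the ``iterated slicing'' and ``$k$-slicing'' you mention are not what is used here; the actual descent is a single conformal change on $\Sigma_\infty$ invoking the $(n-1)$-dimensional positive mass theorem.
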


This theorem was long conjectured by Schoen and the third author \cite{SY88, SYbook, LUY20}, where it was shown to imply \cite{SY88} the following \emph{Liouville theorem}.
\begin{thm}\label{Liouville}
    Let $(M^n,g)$, $n\geq 3$, be a complete, locally conformally flat manifold with nonnegative scalar curvature. If $\Phi:M^n\to S^n$ a conformal map, then $\Phi$ is injective and $\partial\Phi(M)$ has zero Newtonian capacity.
\end{thm}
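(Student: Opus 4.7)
The plan is to derive a contradiction with Theorem \ref{thm1} under the failure of injectivity of $\Phi$ or positivity of the Newtonian capacity of $\partial\Phi(M)$. The strategy is to conformally modify $g$ via a Green's function of the conformal Laplacian to produce an asymptotically Schwarzschild end of strictly negative mass.

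Write $\Phi^*g_{S^n}=\phi^{4/(n-2)}g$ for a positive conformal factor $\phi$, and pick $p_0\in M$ which has a neighborhood mapped diffeomorphically by $\Phi$ onto a neighborhood of $q_0:=\Phi(p_0)$. Let $G_{q_0}:S^n\setminus\{q_0\}\to(0,\infty)$ be the positive Green's function of the conformal Laplacian $L_{g_{S^n}}$ with pole at $q_0$; because $G_{q_0}^{4/(n-2)}g_{S^n}$ is isometric to flat $\R^n$, the associated ADM mass is zero. By conformal covariance of the conformal Laplacian, the function $w_0:=\phi\cdot\Phi^*G_{q_0}$ satisfies $L_g w_0=0$ on $M\setminus\Phi^{-1}(q_0)$. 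In parallel, let $w:=G^M_{p_0}$ be the minimal positive Green's function of $L_g$ on $M$ with pole at $p_0$, obtained as the monotone limit of Dirichlet Green's functions along a smooth exhaustion; this exists and is positive since $R_g\ge0$, and satisfies $w\le w_0$ by direct comparison on each exhaustion level.

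The difference $w_0-w$ is a nonnegative $L_g$-harmonic function on $M$ (the singularities at $p_0$ cancel). In $g$-normal coordinates around $p_0$ both admit expansions of the form $c_n|x|^{-(n-2)}+A+O(|x|)$ sharing the same singular coefficient, and the ADM mass of $\widetilde g := w^{4/(n-2)}g$ at $p_0$ is a positive multiple of the constant term $A$. For $w_0$ this constant term is zero by the identification with flat $\R^n$ above, so $A=-(w_0-w)(p_0)\le 0$. Under the contradiction hypothesis, the inequality is strict: if $\Phi$ has an additional preimage of $q_0$, then $w_0$ is strictly larger than the minimal Green's function because of the extra pole; if $\partial\Phi(M)$ has positive Newtonian capacity, then $w$ can decay along additional ends of $M$ inaccessible to $w_0$. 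In either case $w_0-w>0$ on a nonempty open set of $M$, so by the strong maximum principle $(w_0-w)(p_0)>0$, giving strict negativity $A<0$.

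It remains to apply Theorem \ref{thm1} to $(M\setminus\{p_0\},\widetilde g)$, which is scalar flat everywhere on $M\setminus\{p_0\}$, asymptotically Schwarzschild at $p_0$, and complete — near $p_0$ by the AF asymptotics, and away from $p_0$ because $w$ is locally bounded below by positivity of the Green's function combined with completeness of $g$. Theorem \ref{thm1} then yields $m(\widetilde g)\ge 0$, contradicting $A<0$. The principal obstacle is the strict inequality $(w_0-w)(p_0)>0$, which is essentially a capacitary comparison between the Green's functions on $M$ and on the reference sphere; this and the verification of completeness of $\widetilde g$ globally both hinge on careful potential-theoretic analysis of the exhaustion construction of $w$.
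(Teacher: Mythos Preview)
The paper does not supply its own proof of this statement; it is recorded as a corollary of Theorem~\ref{thm1}, with the reduction attributed to \cite{SY88}. Your outline is exactly that Schoen--Yau reduction: blow up the minimal Green's function of $L_g$ at a point $p_0$, compare with the pullback of the spherical Green's function to detect a strictly negative constant term (hence negative ADM mass), and invoke Theorem~\ref{thm1} on the resulting scalar-flat manifold. So the overall route matches what the paper relies on by citation.

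Two of the justifications you give, however, are not correct as stated. First, completeness of $(M\setminus\{p_0\},\widetilde g)$: saying ``$w$ is locally bounded below by positivity combined with completeness of $g$'' is not an argument. A strictly positive conformal factor on a complete manifold can certainly yield an incomplete metric once it decays at infinity, and the minimal Green's function does tend to zero along non-parabolic ends. In \cite{SY88} completeness is extracted from the locally conformally flat structure itself, by transporting the problem through $\Phi$ to $\Phi(M)\subset S^n$ and controlling the Green's function near $\partial\Phi(M)$; it is not a soft consequence of positivity. Second, in the positive-capacity branch, the phrase ``$w$ can decay along additional ends of $M$ inaccessible to $w_0$'' misidentifies the mechanism: both $w$ and $w_0$ live on all of $M$. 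The strict inequality $w<w_0$ comes instead from pushing forward via $\Phi$ and conformal covariance to compare the Dirichlet Green's function of $L_{g_{S^n}}$ on $\Phi(M)$ against the full spherical Green's function $G_{q_0}$, which differ precisely when $S^n\setminus\Phi(M)$ has positive capacity. You do flag both points as needing ``careful potential-theoretic analysis,'' which is fair, but the heuristics you offer for them are the wrong ones. Finally, note that Theorem~\ref{thm1} is proved here only for $3\le n\le 7$, so this route inherits that dimensional restriction; the paper's statement for all $n\ge 3$ leans on the alternative reduction in \cite{LUY20}.
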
 

Recently, the present authors \cite{LUY20} carried out Lohkamp's idea in this setting, making use of good estimates on the Green's function to simplify the asymptotics, to show that Theorem \ref{Liouville} actually follows from the impossibility of $R>0$ on $T^n\csum X^n$ for arbitrary $X^n$. The present paper gives a new proof completely in the spirit of what is outlined in \cite{SY88}.

\begin{rk}
Manifolds with complicated ends satisfying the hypotheses of Theorem \ref{thm1} can be constructed using connected sums \cite{GLcsum}. 
\end{rk}

The difficulty in proving Theorem \ref{thm1} is that some of the standard approaches to Theorem \ref{pmt} do not seem to apply. The usual procedure of using density theorems to obtain harmonic asymptotics is hampered by the lack of control of the PDEs on the arbitrary non-compact ends. That being said, it might be possible to produce harmonic asymptotics on the asymptotically flat end via a similar argument to the density theorem. That this can be done in a complete way is not at all clear and it seems like the geometry on the arbitrary ends can be altered significantly in this process. So this approach might work for Theorem \ref{thm1}, but it is doubtful that it can yield our quantitative Theorem \ref{thm2}. 

The reduction of Theorem \ref{thm1} to a geometric statement about $R>0$ via a Lokhamp-type compactification seems to be unavailable for the same reason. This is somewhat tantalizing because the analog of the geometric statement that proves Theorem \ref{pmt} is known: $T^n\#X^n$ admits no complete metric with $R>0$ when $X^n$ is arbitrary by \cite{CL20}; cf.\ \cite{LUY20} for a similar result that applies to a topologically wider class of spaces under an additional geometric assumption for $4\leq n \leq 7$. Finally, spinor based proofs also fall short because they involve controlling boundary integrals in the arbitrary ends, about which nothing is assumed. 

In view of these obstacles, the only viable approach seems to be the original proof \cite{SY79PMT, SchoenPMT}, which constructs a limiting hypersurface from solutions to the Plateau problem in the asymptotically flat end on which the mass is assumed negative. This method cannot be naively implemented however, because nothing seems to stop the solutions from venturing out into the arbitrary ends, which would ruin the asymptotic flatness and completeness of the limiting hypersurface and conformal metric that is crucial for the rest of the argument.   

\begin{rk}\label{Rick}
When $n=3$, Richard Schoen has pointed out to us that Theorem \ref{thm1} follows from essentially the original argument in \cite{SY79PMT} as follows:  The complete minimal surface constructed must be conformally equivalent to $\Bbb C$ by \cite{FCS80, SY82} and the fact that the scalar curvature can be taken strictly positive on the asymptotically Schwarzschild end (see Section \ref{sec:thm1} below). It follows that since it is asymptotically planar, it can only enter the core a bounded amount. At this point, the rest of the argument goes through unchanged. This does not give our stronger Theorem \ref{thm2} below, however. 
\end{rk}

To overcome the issues in dimensions $4\le n\le 7$, we minimize a version of Gromov's $\mu$-bubble functional \cite{G18, GromovFour}, which we construct to prevent solutions from exploring the other ends. The $\mu$-bubble functional, which has recently found great success in noncompact scalar curvature problems \cite{CL20, G20, Richard, Z20, Z21}, is defined on Caccioppoli sets $\Omega$ by 
\begin{equation}
\mathcal A(\Omega)=\mathcal H^{n-1}(\partial^*\Omega)-\int_M(\chi_\Omega -\chi_{\Omega_0}) h\,d\mathcal H^n,
\end{equation}
where $h$ is a function tending to $\pm\infty$ at finite distance from the region of interest and $\Omega_0$ is a reference set. This allows for a trivial construction of barrier surfaces and stable $\mu$-bubbles (which have prescribed mean curvature $h$) will be Yamabe positive as long as the following condition is satisfied:
\begin{equation}\label{mu}R_g+h^2-2|\nabla_gh|>0.\end{equation}
So to use $\mu$-bubbles effectively, one has to construct $h$ which blows up in the right way for the geometry to work out and yet still satisfy the inequality \eqref{mu}. In our case, the function $h$ is identically zero in the asymptotic region.

Based on this approach, we can actually prove something stronger and more surprising than Theorem \ref{thm1}. 

\begin{thm}\label{thm2}
Let $(M^n,g)$, $3\le n\le 7$, be an asymptotically Schwarzschild manifold, not assumed to be complete or have nonnegative scalar curvature everywhere. Let $U_1$ and $U_2$ be neighborhoods of infinity with $\overline{U_2}\subset U_1$ and $D>0$. If furthermore
\begin{enumerate}
    \item $g$ has no points of incompleteness in the $D$-neighborhood of $U_1$,
    
    \item $R_g\ge 0$ in the $D$-neighborhood of $U_1$, and 
    
    \item the scalar curvature satisfies the largeness assumption 
    \begin{equation}\label{largeness}R_g> \frac{32}{D}\left(\frac{8}{D}+\frac{1}{\dist_g(U_2,\partial U_1)}\right)\end{equation}
    on $\overline{U_1}\setminus U_2$,
\end{enumerate}
then $m\ge 0$. 
\end{thm}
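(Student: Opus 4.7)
Assume $m<0$ for contradiction. My plan is to execute the Schoen--Yau/Schoen proof of the PMT along the lines of \cite{SY79PMT,SchoenPMT}, but replacing the minimal-surface step by a $\mu$-bubble as flagged in the excerpt, so the surface is confined to the $D$-neighborhood of $U_1$ where the hypotheses actually give us control.

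First I would build the prescribed mean curvature function $h$. On the annulus $\overline{U_1}\setminus U_2$ let $\rho$ be a smoothing of $\dist_g(\cdot,\partial U_1)$, ranging over $[0,L]$ with $L=\dist_g(U_2,\partial U_1)$. I take $h=h(\rho)$ with $h(L)=0$, $h(\rho)\to-\infty$ as $\rho\to 0^+$, and extend by $0$ off the annulus; the pointwise target is $R_g+h^2-2|h'|>0$ throughout. A concrete choice is $h(\rho)\sim -c/\rho$ on a boundary layer of width $\sim D/8$ with $c\ge 2$ (so $h^2-2|h'|\ge 0$ there automatically), then smoothly interpolated down to $0$ at $\rho=L$. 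The two pieces contribute two scales for $|h'|$---proportional to $1/D$ and $1/L$---and tuning the constants makes the quantitative hypothesis \eqref{largeness} exactly sufficient for $R_g+h^2-2|\nabla h|>0$ on the annulus, using only (2) and (3). With $h$ in hand, I set up the $\mu$-bubble problem as in \cite{CL20}: pick a reference Caccioppoli set $\Omega_0$ whose boundary is a coordinate plane in the AF end (extended arbitrarily through $M\setminus U_1$) and minimize
\[\mathcal A(\Omega)=\mathcal H^{n-1}(\partial^*\Omega)-\int_M(\chi_\Omega-\chi_{\Omega_0})h\,d\mathcal H^n\]
over $\Omega$ differing from $\Omega_0$ only inside a bounded set. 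The blow-up of $-h$ at $\partial U_1$ prevents $\Omega$ from crossing into $M\setminus U_1$, while $m<0$ together with a density-type perturbation making $R_g>0$ on a compact slice of the AF end supplies the outer barrier and a strict-positivity cushion. Standard GMT for $n\le 7$ then produces a smooth stable minimizer $\Sigma=\partial^*\Omega$, which is a minimal hypersurface in $U_2$ (where $h=0$) with asymptotic behavior matching the coordinate plane $\partial\Omega_0$.

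At this point I close the argument. For $n=3$ I would appeal directly to Remark \ref{Rick}: $\Sigma$ is a complete asymptotically planar stable minimal surface, conformally $\mathbb{C}$ by \cite{FCS80,SY82}, and the original Schoen--Yau integration-by-parts on $\Sigma$ contradicts $m<0$. For $4\le n\le 7$ I would invoke Schoen's descent \cite{SchoenPMT}, iterating the construction: inside $\Sigma^{n-1}$ build a second-generation stable $\mu$-bubble $\Sigma^{n-2}$ using an $h$ inherited from the induced geometry and supported away from the end of $\Sigma$, and continue down to a $2$-manifold where Gauss--Bonnet finishes the job. The main obstacle I anticipate is this iteration: verifying that the quantitative hypotheses---completeness, $R\ge 0$, and the largeness condition on an annulus---propagate through the Gauss equation to the induced geometry of each successive $\Sigma^{n-k}$, so the descent has enough structure at each stage. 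Controlling how the pointwise inequality $R+h^2-2|\nabla h|>0$ passes to the next layer is where I would expect the bookkeeping to be most delicate.
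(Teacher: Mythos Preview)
Your overall strategy matches the paper's---use a $\mu$-bubble weight to confine the surface, then a Schoen-type argument---but two key steps diverge and create genuine gaps.

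The first is the weight and the variational setup. Your $h$ blows up at $\partial U_1$, whereas the paper's $h$ (Proposition~\ref{prop:h}) is roughly $-8/D$ on the annulus and blows up at distance $\sim D/2$ \emph{outside} $U_1$; this is why hypotheses (1)--(2) refer to the $D$-neighborhood and why both scales $D$ and $\dist_g(U_2,\partial U_1)$ appear in \eqref{largeness}. Your ``boundary layer of width $\sim D/8$'' inside an annulus of width $L$ does not make sense when $L<D/8$, and you do not recover the stated constant. More importantly, your direct minimization over $\Omega$ agreeing with a half-space at infinity gives only ordinary stability (compactly supported variations). The paper instead solves the Dirichlet problem with boundary $\Gamma_{\sigma,a}=\{|x'|=\sigma,\ x^n=a\}$, then minimizes over the height $a$ (Lemma~\ref{lem:heights}), and only then sends $\sigma\to\infty$. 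This height picking is precisely what upgrades stability to test functions asymptotic to a nonzero constant (Theorem~\ref{strongstab}), and that upgrade is indispensable below.

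The second gap is your $4\le n\le 7$ descent. You propose to iterate the $\mu$-bubble construction inside $\Sigma^{n-1}$, $\Sigma^{n-2}$, etc., and you correctly flag the obstacle: there is no mechanism to propagate the annulus and largeness hypotheses through the Gauss equation to the induced geometry at each stage. The paper does not attempt this. Instead, strong stability (Corollary~\ref{FINALLY}) shows that the conformal Laplacian on $(\Sigma_\infty,\gamma)$ is an isomorphism on the relevant weighted Sobolev space; one solves $L_\gamma v=-R_{\Sigma_\infty}$ to produce a scalar-flat, asymptotically flat metric on $\Sigma_\infty$ with strictly \emph{negative} ADM mass, contradicting the ordinary positive mass theorem in dimension $n-1$ (where $\Sigma_\infty$ is complete with a single AF end and no exotic ends remain). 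For $n=3$ you likewise cannot invoke Remark~\ref{Rick}: that remark concerns Theorem~\ref{thm1} and a genuine minimal surface, whereas your $\Sigma$ has mean curvature $h\ne 0$ on the annulus; the paper closes instead by plugging a logarithmic cutoff into the $\mu$-bubble stability inequality to get $\int_{\Sigma_\infty}K>0$ and then applying Gauss--Bonnet.
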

%caption:Theorem \ref{thm2} applies to arbitrary non-compact manifolds with a single distinguished asymptotically Schwarzschild end. Here, positivity of scalar curvature in annulus compensates for negative scalar curvature elsewhere and possible incompleteness.%
\begin{figure}
    \centering
    \includegraphics[width=9cm]{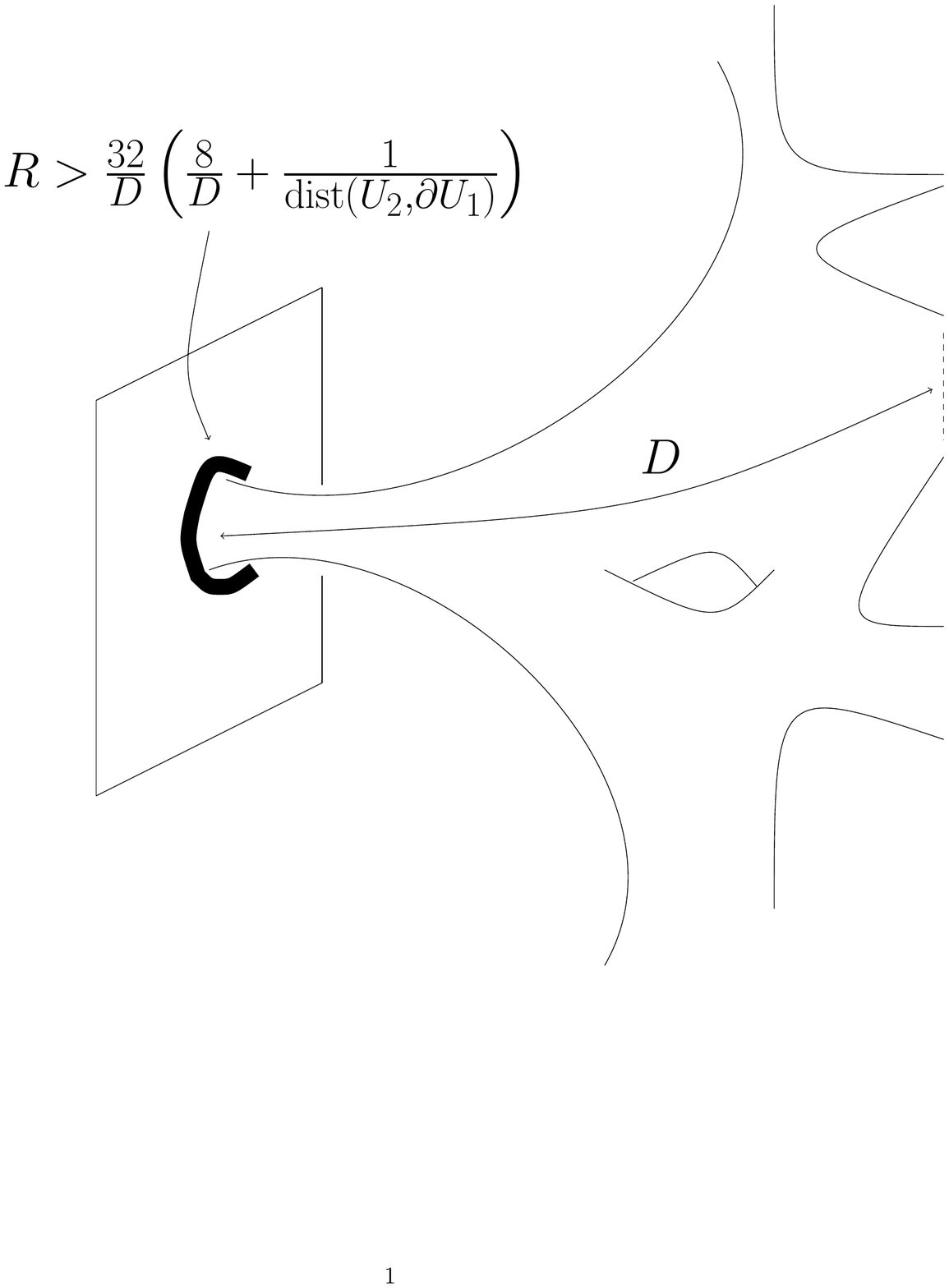}
    \caption{Theorem \ref{thm2}.}
    \label{Fig.2}
\end{figure}
This theorem is proved in Section \ref{sec:thm2} under the additional assumption that $R_g>0$ sufficiently far out on the end. This assumption is removed in Section \ref{sec:thm1} and Theorem \ref{thm1} is then obtained as a corollary.
 
\begin{rk}\label{rkbig}
(1) This theorem allows for incompleteness and negative scalar curvature as long as it is far enough away from the region in which the largeness condition \eqref{largeness} holds. 

(2) The fact that scalar curvature is only assumed to be nonnegative away from the incompleteness is particularly surprising in view of the aforementioned approaches to Theorem \ref{pmt}, which require nonnegative scalar curvature everywhere.

(3) The condition \eqref{largeness} is only needed to enforce the condition \eqref{mu} in our construction. 

(4) The prototypical $U_i$ is the region in $\mathcal E$ with $|x|> r_i$ and $r_1<r_2$. In this case, the largeness assumption \eqref{largeness} can be replaced by a lower bound involving $D$, $r_2-r_1$, and the Lipschitz constant of $|x|$ with respect to $g$, which in turn can be estimated using 
\begin{equation}|\nabla_g|x||^2\le 1+\sum_{ij}|g^{ij}-\delta^{ij}|.\end{equation}
See Remark \ref{rk:distcomp}.

(5) Theorem \ref{thm2} is interesting to consider in light of the negative mass Schwarzschild solution, which is incomplete and scalar flat and hence falls short of Theorem \ref{thm2} because of condition (3). But our theorem shows that any perturbation which increases scalar curvature (and preserves the asymptotically Schwarzschild nature) cannot ``push" the incompleteness away, so that condition (1) is not satisfied.   
\end{rk}

\textbf{Definitions and conventions.} We assume all manifolds are smooth and oriented. When discussing minimal hypersurfaces, the dimensional restriction $n\le 7$ coming from the regularity theory will be implicit. 

We use the notation $A\lesssim B$ to denote $A\le CB$, where the constant $C$ does not depend on $A$ or $B$. 

\begin{defn}\label{incomplete}
	Let $(X,d)$ be a metric space and $(\overline X,d)$ be its completion. For example, $\overline X$ can be constructed by taking appropriate equivalence classes of Cauchy sequences. A point in $\overline X\setminus X$ is called a called a \emph{point of incompleteness} for $X$. A set $S\subset X$ is said to be \emph{complete} if its closure in $X$ remains closed under the inclusion $X\to\overline X$.   
\end{defn}

\begin{defn}\label{maindef}
Let $M^n$ be a noncompact manifold with distinguished end $\mathcal E$. We say that $(M,g)$ possesses a \emph{structure of infinity} along $\mathcal E$ if $\mathcal E$ possesses no points of incompleteness and there exists a diffeomorphism 
\[\Phi:\mathcal E\to \Bbb R^n\setminus \text{ball}\]
and the coordinate norm $|x|=((x^1)^2+\cdots +(x^n)^2)^{1/2}$ diverges as we go out along the end. The set $\mathring M=M\setminus \mathcal E$ is called the \emph{core}. Note that in our definition, the core is not assumed to be compact. 

We say that $(M^n,g,\mathcal E)$ is \emph{asymptotically Schwarzschild} of \emph{mass} $m\in\Bbb R$ if in the coordinates $x^i$,
\[g_{ij}-\left(1+\frac{m}{2|x|^{n-2}}\right)^{\frac{4}{n-2}}\delta_{ij}\in C^{2,\alpha}_{n-1}(\mathcal E)\]
for some $\alpha\in (0,1)$. For the spaces $C^{k,\alpha}_\tau$, see Definition \ref{weightedhol}.

We say that $(M^n,g,\mathcal E)$ is \emph{asymptotically flat} of \emph{order} $\tau$ if 
\[g_{ij}-\delta_{ij}\in W^{2,p}_{\tau}(\mathcal E)\]
for some $p>n$ and $R_g\in L^1(\mathcal E)$. For the spaces $W^{k,p}_\tau$, see Definition \ref{weightedsob}. We will always assume $\tau>\frac{n-2}{2}$. In this case, the \emph{ADM mass} is defined by 
\begin{equation}
\label{ADM}
m=\lim_{\rho\to\infty}\frac{1}{2(n-1)|S^{n-1}|}\int_{|x|=\rho}(\partial_i g_{ij}-\partial_j g_{ii})\frac{x^j}{|x|}\,d\mathcal H^{n-1}.\end{equation}
\end{defn}

\vspace{3mm}

\textbf{Acknowledgments.} M.L.\ thanks the Gordon and Betty Moore and the John Templeton foundations for supporting the research carried out at Harvard's Black Hole Initiative. R.U.\ thanks Prof.\ T.\ Bourni, Prof.\ G.\ De Philippis, and V.\ Giri for discussions regarding Lemma \ref{curvatureestimate} and Appendix \ref{app:A} and Prof.\ R.\ Schoen for Remark \ref{Rick}. S.-T.Y.\ acknowledges the support of NSF Grant DMS-1607871.

\section{The Dirichlet problem for $\mu$-bubbles}

We first prove a general existence theorem for $\mu$-bubbles but in the setting of the Dirichlet problem. A similar situation was already considered by Yau in \cite{Y01}.

 We always assume the dimension of the ambient manifold lies between $2$ and $7$ so that we may apply the regularity theory for almost-minimizing hypersurfaces \cite{DS02, Maggi}. 
	
	\begin{thm}\label{existence}
			Let $(M^{n},g)$ be a compact orientable Riemannian manifold with boundary $\partial M=\partial_0M\amalg \partial_-M$, where $\partial_0M$ and $\partial_-M$ are closed hypersurfaces. Let $\Gamma^{n-2}\subset \partial_0M$ be a smooth hypersurface which bounds a compact set $P^n\subset\partial_0M$. Let $h$ be a real-valued smooth function on $M^\circ$, extending smoothly to $\partial_0M$, such that $h<H_{\partial_0M}$ along $\partial_0M$ and $h\to -\infty$ along $\partial_-M$. Then there exists a hypersurface $\Sigma^{n-1}\subset M$ with the following properties:
			
\begin{enumerate}
	\item $\partial \Sigma=\Gamma,$

	\item The interior of $\Sigma$ is smooth and contained in the interior of $M$, and
	
	\item The mean curvature of $\Sigma$ is equal to $h$ with respect to the outward normal in Lemma \ref{topological} below.
\end{enumerate}
	\end{thm}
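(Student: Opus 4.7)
The plan is to realize $\Sigma$ as the free boundary $\partial^*\Omega \cap M^\circ$ of a minimizer $\Omega$ of a Dirichlet-type $\mu$-bubble functional. Fix a smooth reference Caccioppoli set $\Omega_0 \subset M$ whose trace on $\partial_0 M$ equals $P$ and which is disjoint from a neighborhood of $\partial_-M$, and consider the variational class $\mathcal{C}$ of Caccioppoli sets $\Omega \subset M$ with trace $P$ on $\partial_0 M$ and with $\Omega \symD \Omega_0$ compactly contained in $M \setminus \partial_-M$. On $\mathcal{C}$ I would minimize
\[
\mathcal{A}(\Omega) = \mathcal{H}^{n-1}(\partial^*\Omega \cap M^\circ) - \int_M (\chi_\Omega - \chi_{\Omega_0})\, h\, d\mathcal{H}^n,
\]
which is finite because $h$ is smooth on every compact subset of $M \setminus \partial_-M$ and the support of $\chi_\Omega - \chi_{\Omega_0}$ is such a compact subset. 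By design, any competitor has trace $P$ on $\partial_0 M$, so $\partial^*\Omega$ meets $\partial_0 M$ along $\Gamma$ and condition (1) is built in.

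The next step is to produce a minimizer via the direct method. First I would observe that minimizing sequences stay uniformly away from $\partial_-M$: since $h \to -\infty$ along $\partial_-M$, including any additional region near $\partial_-M$ in $\Omega$ (beyond what $\Omega_0$ contains, namely none) yields a contribution to $-\int(\chi_\Omega - \chi_{\Omega_0}) h$ that swamps any possible perimeter savings, so for $K$ large enough $\Omega \cap \{h < -K\} = \emptyset$ for every minimizer in a minimizing sequence. After restricting to the fixed compact region $\{h \ge -K\} \cap M$, BV compactness yields a subsequence converging in $L^1_{\loc}$ to some $\Omega_\infty \in \mathcal{C}$, and lower semicontinuity of perimeter combined with dominated convergence on the bulk term shows that $\Omega_\infty$ achieves the infimum.

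Because $h$ is bounded on the support of admissible variations, $\Omega_\infty$ is a $(\Lambda, r_0)$-almost minimizer of perimeter in $M^\circ$, so the regularity theory for almost minimizers in dimensions $n \le 7$ \cite{DS02, Maggi} gives that $\Sigma := \partial^*\Omega_\infty \cap M^\circ$ is smooth. The first variation against smooth vector fields compactly supported in $M^\circ$ then yields the prescribed mean curvature equation $H_\Sigma = h$ with respect to the outward normal of $\Omega_\infty$, giving (2) and (3). The barrier condition $h < H_{\partial_0 M}$ is used, via a standard push-in competitor, to rule out accumulation of $\overline\Sigma$ at $\partial_0 M$ outside of $\Gamma$: any such accumulation would permit sliding $\Omega_\infty$ slightly inward along $\partial_0 M$, and a Taylor expansion of the area functional identifies $H_{\partial_0 M}$ as the leading coefficient, strictly dominated by $h$, making the perturbation area-decreasing.

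The main obstacle I anticipate is exactly this boundary analysis along $\partial_0 M$: verifying that the BV limit preserves the trace constraint and that $\overline\Sigma \cap \partial_0 M = \Gamma$ rather than a larger subset. A clean way to bypass a direct boundary-regularity argument, if it proves technically inconvenient, is to extend $(M,g)$ slightly across $\partial_0 M$ to an ambient manifold $\tilde M$ and extend $h$ there so that it becomes very negative on the extension outside a neighborhood of $P$ and very positive on the extension inside a neighborhood of $P$; then the unconstrained $\mu$-bubble minimization on $\tilde M$ has trace exactly $P$ on $\partial_0 M$ by the same barrier argument used for $\partial_-M$, and one recovers the desired $\Omega_\infty$ by restriction to $M$.
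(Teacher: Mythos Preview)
Your approach is essentially the same as the paper's: minimize the $\mu$-bubble functional over a suitable class, use barrier/replacement arguments at both boundary components to confine a minimizing sequence to a compact region where $h$ is bounded, apply BV compactness, and invoke almost-minimizer regularity. The paper phrases the variational class in terms of hypersurfaces $\Sigma$ homologous to $P$ (using Lemma~\ref{topological} to assign the enclosed region $U_\Sigma$) rather than Caccioppoli sets with a trace constraint, but this is only cosmetic.

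One point worth sharpening: your barrier argument near $\partial_-M$ is stated heuristically (``swamps any possible perimeter savings''), whereas the paper makes this a genuine replacement. It foliates a collar of $\partial_-M$ by hypersurfaces $S_-^\tau=\partial\Omega_-^\tau$ whose inward normal $\eta$ satisfies $h<-\Div\eta$, and then shows directly (as in Chodosh--Li) that replacing $U_\Sigma$ by $U_\Sigma\setminus\Omega_-^\tau$ strictly decreases $\mathcal F$. The same mechanism, with the hypothesis $h<H_{\partial_0M}$, handles $\partial_0M$; this is how the paper avoids the boundary-trace issue you flag, without any extension across $\partial_0M$.
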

	
	We view $P$ as a top-dimensional chain in $\partial_0M$ and as a codimension 1 chain in $M$.
	 Let $\Sigma^n\subset M$ be a smooth hypersurface with oriented boundary equal to $\Gamma$.
	
	\begin{lem}\label{topological}
	With the notation as above, if $\Sigma$ is homologous to $P$, then  $\Sigma\cup P=\partial U$, where $U\subset M$ is a unique open set. 
	\end{lem}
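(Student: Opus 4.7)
The plan is to translate the statement into a claim about integer-valued BV functions on $M$, exploiting the orientability of $M$.

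Since $\partial \Sigma = \Gamma = \partial P$ as oriented $(n-2)$-chains in $\partial_0 M$, the chain $[\Sigma] - [P]$ is a closed $(n-1)$-cycle in $M$. The hypothesis that $\Sigma$ is homologous to $P$ provides an integral top-dimensional current $T$ in $M$ with $\partial T = [\Sigma] - [P]$. Because $M$ is an oriented $n$-manifold, $T$ is represented by an integer-valued function $f: M \to \Bbb Z$ of bounded variation, unique up to an additive constant, with $Df = [\Sigma] - [P]$ as Radon measures. In particular, $f$ is locally constant on $M \setminus (\Sigma \cup P)$ and jumps by $\pm 1$ across each of $\Sigma$ and $P$, with signs dictated by the given orientations.

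The next step is to show that, after adjusting by a constant, $f$ takes only the values $0$ and $1$. Equivalently, the piecewise-smooth embedded hypersurface $\Sigma \cup P$, which is closed as a cycle since $\partial \Sigma$ and $\partial P$ cancel along $\Gamma$, separates $M$ into exactly two open pieces. I would argue this by analyzing the level sets $E_k := \{f \ge k\}$: each has finite perimeter with reduced boundary inside $\Sigma \cup P$. Since $P \subset \partial M$, the portion of the topological boundary of $E_k$ lying in the interior $M^\circ$ must be contained in $\Sigma$; combined with the fact that $Df$ has multiplicity exactly one on each of $\Sigma$ and $P$, and that an open subset of $M^\circ$ with empty interior boundary is trivial by connectedness of $M$, this forces all but one of the level sets $\{f = k\}$ to be empty. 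After shifting, $f \in \{0, 1\}$ and $U := \{f = 1\}$ is an open set whose topological boundary in $M$ is $\Sigma \cup P$.

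For uniqueness, if $U'$ is another such open set whose boundary realizes $\Sigma$ and $P$ with the given orientations as the outward normal to $U'$, then $\chi_U - \chi_{U'}$ has vanishing distributional gradient on the connected $M$ and is therefore constant, and being $\{-1, 0, 1\}$-valued is identically zero. The main obstacle I anticipate is the separation argument in the middle paragraph, particularly the interaction between the corner along $\Gamma$ and the fact that $P$ lies in $\partial M$ rather than $M^\circ$; the level-set analysis sketched above handles both issues automatically, and everything else reduces to standard facts about BV functions on oriented manifolds.
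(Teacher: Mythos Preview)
Your approach via integer-valued BV functions is correct and takes a genuinely different route from the paper. The paper argues purely topologically: the cycle $A=[\Sigma]-[P]$ is null-homologous, a Mayer--Vietoris computation (using that $\Sigma\cup P$ is a connected two-sided hypersurface-with-corner) bounds the rank of $H_0(M\setminus A)$ by $2$, and rank exactly $1$ is excluded because one could then build a closed curve meeting $A$ transversally in a single point, contradicting that intersection number with a null-homologous cycle vanishes. Your argument instead realizes the null-homology concretely as a multiplicity function $f$ and reads off $U=\{f=1\}$ directly.

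What each approach buys: the paper's proof is shorter and needs no analysis, but leaves the identification of \emph{which} component is $U$ implicit in the orientation conventions. Your argument is more constructive and dovetails naturally with the Caccioppoli-set framework used immediately afterward in the existence proof; in particular your uniqueness argument via $\chi_U-\chi_{U'}$ having vanishing distributional gradient is cleaner than appealing to orientations. The level-set step where you force $f\in\{0,1\}$ is the crux and is the analytic counterpart of the paper's ``rank at most $2$'' step; both implicitly use that $\Sigma\cup P$ is connected (they share $\Gamma$) and two-sided. One small sharpening worth making explicit: the $[P]$ contribution to $\partial T$ forces the trace of $f$ on $\partial_0 M$ to be $\chi_P$ (hence already $\{0,1\}$-valued), which pins down the additive constant and, combined with the multiplicity-one jump across $\Sigma$ in $M^\circ$, rules out a third interior value without further connectedness hypotheses on $\Sigma$ itself.
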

We denote this assignment by $\Sigma\mapsto U_\Sigma$.
\begin{proof}
The chain	$A=\current{\Sigma}-\current{P}$ is a cycle in $M$. Using the Mayer--Vietoris sequence, it is easy to see that $H_0(M\setminus A)$ has rank at most 2, and we want to rule out the case when it has rank 1. In this case it would be possible to find a curve intersecting $A$ exactly once. (Take a short segment orthogonal to $\Sigma$ and since $M\setminus A$ is connected, the endpoints can be joined up without further intersections with $A$.) But $A$ is homologous to zero and intersection number is a homological invariant.
\end{proof}

\begin{proof}[Proof of Theorem \ref{existence}]
	Let $\mathcal C$ be the class of smooth hypersurfaces considered in Lemma \ref{topological} and define
	\begin{equation}\label{F}\mathcal F(\Sigma)=\mathcal H^{n-1}(\Sigma)-\int_{U_\Sigma}h\,d\mathcal H^{n}.\end{equation}
	 Let $\{\Sigma_i\}\subset\mathcal C$ be a sequence of hypersurfaces such that
	 \[\lim_i\mathcal F(\Sigma_i)= \inf_\mathcal C\mathcal F.\]
	 
	 We show using a replacement argument that we can assume $\Sigma_i$ avoids a collar neighborhood of $\partial_-M$. We can foliate a collar of $\partial_-M$ by hypersurfaces $S^\tau_-=\partial_\mathrm{top} \Omega^\tau_-$ whose normal $\eta$ (pointing into the bulk, out of $\Omega^\tau_-$) satisfies $h< - \Div \eta$ for $0<\tau\le\tau_0$.  If $\Sigma_i$ enters this region, we perform a replacement and define $\Sigma'_i=\partial(U_\Sigma\setminus \Omega_-^\tau)$. Then as in Chodosh--Li \cite{CL20}, $\mathcal F(\Sigma'_i)<\mathcal F(\Sigma_i)$. So we may assume that $\Sigma_i$ avoids a collar of $\partial_-M$ and hence we may assume $|h|\lesssim 1$.  
	 
	 The condition $h<H_{\partial_0M}$ allows for a similar replacement/barrier argument near $\partial_0M$. See Yau \cite{Y01}.
	 
	 Since $M$ is compact and $h$ is bounded in the region of interest, $\mathcal F(\Sigma_i)\gtrsim -1$. We obviously have $\mathbf M(\Sigma_i)+\mathbf M(P)\lesssim 1$ by previous observations. Also, $\mathbf M(U_{\Sigma_i}) \lesssim 1.$ So by the BV compactness theorem, there exists a subsequence (still denoted by $i$) of the $U_{\Sigma_i}$'s converging in the sense of Caccioppoli sets. The limiting set will have smooth boundary away from the corner along $P$ and will have mean curvature $h$. It also satisfies a stability inequality as shown below.
\end{proof}

\begin{prop}\label{prop:2ndvar}
If $h=0$ near $\partial_0M$, then the following second variation formula holds for the functional $\mathcal F$ defined in \eqref{F}. Let $F:M\times(-\ve,\ve)\to M$ be a one-parameter family of diffeomorphisms with variation vector field $X=\partial_t F|_{t=0}$ on $\Sigma$ and acceleration $Z=\nabla_{\partial_t}\partial_t F|_{t=0}$ on $\Sigma$. Then 
\begin{equation}\label{secondvariation}\delta^2_{X,Z} \mathcal F(\Sigma)=\int_\Sigma a_{X,Z}\,d\mathcal H^{n-1}+\int_{\Sigma}d_{X,Z}\,d\mathcal H^{n-1},\end{equation}
where 
\begin{align}
a_{X,Z}&= \sum_{i=1}^{n-1}\left[|(\nabla_{e_i}X)^\perp|^2-\Rm(X,e_i,X,e_i)\right]-\sum_{i,j=1}^{n-1}\langle e_i,\nabla_{e_j}X\rangle\langle \nabla_{e_i}X,e_j\rangle\label{a}\\ &\quad +\Div_\Sigma Z+(\Div_\Sigma X)^2,\quad\text{and}\nonumber \\
\label{d}d_{X,Z} &= -h^2- X(h)\langle X, \nu\rangle- \langle Z,\nu\rangle h - \langle X,\delta_X\nu\rangle h,\end{align} where $\{e_i\}_{i=1}^{n-1}$ is an orthonormal frame on $T\Sigma$. The first term satisfies the following integration by parts formula: If $D\subset \Sigma$ is a subdomain and $X=\varphi\nu +\hat X$ and $Z=\zeta\nu+\hat Z$, then \begin{equation}\label{ibps}
   \int_D a_{X,Z}\,d\mathcal H^{n-1}=\int_D b_{X,Z}\,d\mathcal H^{n-1} + \int_{\partial D} c_{X,Z}\,d\mathcal H^{n-2},
\end{equation}
where 
\begin{align}
\label{b}b_{X,Z}&= |\nabla_\Sigma\varphi|^2-(\Ric(\nu,\nu)+|A|^2-h^2)\varphi^2+h(\zeta -2\nabla_{ \hat X}\varphi +A(\hat X ,\hat X)),\quad\text{and}  \\
\label{c}c_{X,Z}&=\langle (\Div_{\Sigma} \hat{X})\hat{X}-2\varphi S(\hat{X}) -\nabla_{\hat{X}}\hat{X}+\hat{Z},\eta\rangle, \end{align}
where $S$ is the shape operator and $\eta$ is the conormal in $\Sigma$. 

\end{prop}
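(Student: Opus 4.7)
The plan is to differentiate the two summands of $\mathcal F$ along the family $F_t$ separately and then combine them. The first-variation identities
\[
\tfrac{d}{dt}\big|_{t=0}\mathcal H^{n-1}(F_t(\Sigma))=\int_\Sigma \Div_\Sigma X\,d\mathcal H^{n-1},\qquad \tfrac{d}{dt}\big|_{t=0}\int_{U_{F_t(\Sigma)}}h\,d\mathcal H^n=\int_\Sigma h\langle X,\nu\rangle\,d\mathcal H^{n-1}
\]
are classical, and subtracting them recovers $\delta_X\mathcal F$, whose vanishing yields the $\mu$-bubble equation $H=h$ with respect to the outward normal.

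For the second variation of area I would pull back to $\Sigma$, writing $\mathcal H^{n-1}(F_t(\Sigma))=\int_\Sigma J_t\,d\mathcal H^{n-1}$, where $J_t$ denotes the tangential Jacobian of $F_t|_\Sigma$. Taylor expanding $J_t$ in $t$ and reading off the coefficient of $t^2/2$ produces the five pieces of $a_{X,Z}$: the terms $|(\nabla_{e_i}X)^\perp|^2$ and $\Rm(X,e_i,X,e_i)$ arise from differentiating the induced metric via Gauss's formula together with the commutator $[\nabla_t,\nabla_{e_i}]$, the bilinear term $\sum_{ij}\langle e_i,\nabla_{e_j}X\rangle\langle\nabla_{e_i}X,e_j\rangle$ is the second-order expansion of the determinant, and $\Div_\Sigma Z+(\Div_\Sigma X)^2$ collects the contribution of the covariant acceleration and the square of the first-order Jacobian change. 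This is the classical second-variation-of-area formula for general (not necessarily normal) ambient diffeomorphism variations.

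For the second variation of the bulk term I would start from the first-variation identity $V'(t)=\int_{F_t(\Sigma)}h\langle V_t,\nu_t\rangle\,d\mathcal H^{n-1}$ with $V_t:=\partial_tF\circ F_t^{-1}$, pull the integral back to $\Sigma$ via $F_t$, and apply the product rule at $t=0$ to the four $t$-dependent factors $h\circ F_t$, $\partial_tF$, $\nu_t\circ F_t$, and $J_t$. These yield, respectively, $X(h)\langle X,\nu\rangle$, $h\langle Z,\nu\rangle$ (where the Christoffel correction that upgrades the coordinate second derivative is absorbed into the covariant acceleration $Z$), $h\langle X,\delta_X\nu\rangle$, and $h\langle X,\nu\rangle\,\Div_\Sigma X$. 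The last of these decomposes via $\Div_\Sigma X=\varphi H+\Div_\Sigma\hat X$, so at the critical point $H=h$ it accounts for the $-h^2$ contribution. Negating and collecting produces $d_{X,Z}$.

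For the integration-by-parts formula (\ref{ibps}), I would substitute $X=\varphi\nu+\hat X$ and $Z=\zeta\nu+\hat Z$ into each summand of $a_{X,Z}$: using $(\nabla_{e_i}(\varphi\nu))^\perp=e_i(\varphi)\nu$ and $(\nabla_{e_i}\hat X)^\perp=A(e_i,\hat X)\nu$ to split normal and tangential parts; invoking the Gauss equation to rewrite $\Rm(X,e_i,X,e_i)$ in terms of $\Ric(\nu,\nu)\varphi^2$, $|A|^2\varphi^2$, and intrinsic curvature of $\Sigma$; and integrating the resulting tangential divergences by parts on $D\subset\Sigma$. The interior remainder assembles into $b_{X,Z}$ while the boundary flux produces $c_{X,Z}$. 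The main technical obstacle is this last reduction: after the $\varphi$/$\hat X$ split, the bilinear term $\sum_{ij}\langle e_i,\nabla_{e_j}X\rangle\langle\nabla_{e_i}X,e_j\rangle$ and the square $(\Div_\Sigma X)^2$ generate many cross terms involving $A(\hat X,\cdot)$ and $\nabla_{\hat X}\varphi$, which must be bookkept carefully and regrouped using the shape operator and Codazzi identities to arrive at the clean forms (\ref{b})--(\ref{c}).
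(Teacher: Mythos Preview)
Your approach is essentially the same as the paper's: the paper cites \cite{Simon, Lee} for the second variation of area yielding $a_{X,Z}$, differentiates the first variation $\int_\Sigma h\langle X,\nu\rangle$ of the bulk term by the product rule to obtain the four summands of $d_{X,Z}$ (invoking $h=0$ near $\partial\Sigma$ to kill any boundary contribution), and cites \cite{Lee} for the integration-by-parts identity \eqref{ibps}. Your outline simply unpacks these citations, and your bookkeeping for the Jacobian term $h\langle X,\nu\rangle\,\Div_\Sigma X$ (extracting $h^2\varphi^2$ via $H=h$) matches what the paper records as $-h\langle X,\nu\rangle\langle\mathbf H,X\rangle$ up to the tangential divergence of $\hat X$, which is absorbed in the subsequent rewriting.
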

\begin{proof}
It is well known \cite{Simon, Lee} that the second variation of the area functional alone is given by 
\[\delta_{X,Z}^2\mathcal H^{n-1}(\Sigma)=\int_\Sigma a_{X,Z} \,d\mathcal H^{n-1}.\]
The first variation of the bulk term is given by \cite{Maggi}
\[\delta_X\int_{U_\Sigma} h\, d\mathcal H^n=\int_\Sigma h\langle X,\nu\rangle\,d\mathcal H^{n-1}. \]
This shows that $\Sigma$ has mean curvature $h$. Varying this gives 
\[\int_\Sigma X(h)\langle X,\nu\rangle+ h\langle Z,\nu\rangle +h\langle X,\delta_X\nu\rangle  -h\langle X,\nu\rangle \langle \mathbf H,X\rangle \, d\mathcal H^{n-1}. \]
Since $h=0$ near $\partial \Sigma$, there is no boundary term involving $h$. The integration by parts formula is also standard; a careful proof can be found in \cite{Lee}.
\end{proof}

We also recall

\begin{lem}
At points where $\hat X =0$, $\delta_X\nu =-\nabla_\Sigma\varphi$. 
\end{lem}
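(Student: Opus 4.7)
The plan is to compute $\delta_X \nu$ by exploiting the two defining properties of a unit normal: $|\nu_t|^2 \equiv 1$ and $\nu_t \perp T\Sigma_t$, where $\Sigma_t = F_t(\Sigma)$ and $\nu_t$ is the unit normal along $\Sigma_t$ consistent with the given orientation. The general formula (valid whether or not $\hat X = 0$) will contain an extra shape-operator term $A(\cdot,\hat X)$, which drops out precisely at points where $\hat X = 0$.

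First I would differentiate $|\nu_t|^2 \equiv 1$ along the curve $t\mapsto F_t(p)$, using the ambient Levi-Civita connection, to conclude that $\langle \delta_X \nu,\nu\rangle = 0$. Hence $\delta_X \nu$ is tangent to $\Sigma$, and it suffices to identify its components against a local tangent frame $\{e_i\}_{i=1}^{n-1}$.

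Next, I would extend this frame along $F_t$ by setting $e_i(t) = dF_t(e_i)$, so that $\nabla_{\partial_t} e_i(t)|_{t=0} = \nabla_{e_i} X$ by torsion-freeness. Differentiating the orthogonality relation $\langle \nu_t, e_i(t)\rangle = 0$ at $t=0$ yields
\begin{equation*}
\langle \delta_X \nu, e_i\rangle = -\langle \nu, \nabla_{e_i} X\rangle.
\end{equation*}
Decomposing $X = \varphi \nu + \hat X$ and using $|\nu|^2=1$ together with $\hat X \in T\Sigma$, the right-hand side becomes
\begin{equation*}
-\langle \nu, (\nabla_{e_i}\varphi)\nu + \varphi \nabla_{e_i}\nu + \nabla_{e_i}\hat X\rangle = -\nabla_{e_i}\varphi + A(e_i, \hat X),
\end{equation*}
where I used $\langle\nu,\nabla_{e_i}\nu\rangle = 0$ and $\langle \nu,\nabla_{e_i}\hat X\rangle = -\langle\nabla_{e_i}\nu,\hat X\rangle = A(e_i,\hat X)$.

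The only step requiring care is keeping track of signs in the shape-operator identification, but nothing here is more than a routine application of the compatibility of $\nabla$ with the metric. At a point where $\hat X = 0$, the second term vanishes and one obtains $\langle \delta_X \nu, e_i\rangle = -\langle \nabla_\Sigma \varphi, e_i\rangle$ for each $i$, which since $\delta_X\nu$ is tangent to $\Sigma$ gives the claimed identity $\delta_X \nu = -\nabla_\Sigma \varphi$.
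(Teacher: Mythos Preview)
Your proof is correct and follows essentially the same route as the paper: both arguments first use $|\nu|^2=1$ to see that $\delta_X\nu$ is tangent to $\Sigma$, and then differentiate the orthogonality $\langle\nu_t,dF_t(e_i)\rangle=0$ (using torsion-freeness to swap $\nabla_{\partial_t}$ and $\nabla_{e_i}$) to identify the tangential components as $-\nabla_i\varphi$. The only cosmetic difference is that you carry along the extra $A(e_i,\hat X)$ term before setting $\hat X=0$, whereas the paper specializes immediately.
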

\begin{proof}
Let $\delta_X\nu =\nabla_{\partial_t}\nu |_{t=0}$ be the variation. Using $\langle \nu,\nu\rangle =1$ along the flow, it is easy to see that $\langle\delta_X\nu ,\nu\rangle=0$. A basis to the tangent space of $\Sigma_t$ is given by $\partial_{x^i}F,$ where $x^i$ is a coordinate system on $\Sigma$. We then compute, using the equality of mixed second partials,
\begin{align*}
    \langle \partial_{x^i}F,\nabla_{\partial_t}\nu\rangle |_{t=0}&= -\langle \nabla_{\partial_t}\partial_{x^i}F,\nu\rangle|_{t=0}\\
    &=-\langle \nabla_{\partial_{x^i}}\partial_t F,\nu\rangle|_{t=0}\\
    &=-\langle \nabla_i X,\nu\rangle\\
    &= -\nabla_i\varphi.\qedhere
\end{align*}
\end{proof}
For us, it is important that merely $\delta_X\nu \perp \Sigma$.

\section{Construction of the weight}

In this section, we construct a general class of $\mu$-bubble weights in any manifold admitting a structure of infinity. Let $U_1$ and $U_2$ be neighborhoods of the distinguished infinity with compact closures in the natural one-point compactification of $\mathcal E$ and such that $\overline{U_2}\subset U_1$. We set $\rho=\dist(U_2,\partial U_1)$, which is finite. 

A model situation for this is the following: Let $\mathcal S_{r}\subset\mathcal E$ be the coordinate sphere $|x|=r$. Consider two radii $r_1<r_2$ and let $U_i$ be the set $\{x\in \mathcal E:|x|>r_i\}$.  Then $U_1\setminus\overline{U_2}$ is the annulus $\{r_1<|x|<r_2\}$. In this case, $\rho$ is not necessarily $r_2-r_1$, but see Remark \ref{rk:distcomp}.

We now state the main result of this section.

\begin{prop}\label{prop:h}
Let $(M^n,g,\mathcal E)$ possess a structure of infinity, let $U_i$ be as above, and let $D>0$. For any $\ve>0$ there exists a function $h\in C^0(M,\overline {\Bbb R})$, smooth where it is finite, with the following properties:
\begin{enumerate}
    \item $h=0$ in $U_2$,
    
    \item $h=-\infty$ outside the metric $D$-neighborhood of $U_1$, and
    
    \item $h$ satisfies the estimates
    \[h^2-2|\nabla_gh|\ge 0\quad\text{in $U_2\cup (M\setminus U_1)$},\]
    and
    \[h^2-2|\nabla_gh|\ge -\sigma(\rho,D)+ \eta(\ve) \quad\text{in $U_1\setminus U_2$},\]
    where
    \[\sigma(\rho,D)=\frac{32}{D}\left(\frac{8}{D}+\frac{1}{\rho}\right)\]
\end{enumerate}
\end{prop}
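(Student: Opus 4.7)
The plan is to build $h=f\circ r_\ve$, where $r_\ve:M\to\Bbb R$ is a smoothed signed distance function to $\partial U_1$ and $f:(-D,\infty)\to[-\infty,0]$ is a smooth monotone profile, so that the inequality $h^2-2|\nabla_g h|\ge \cdots$ reduces to a one-dimensional ODE inequality for $f$. Concretely, I first set
\[r(x)=\begin{cases}\dist_g(x,\partial U_1)&x\in\overline{U_1},\\-\dist_g(x,\partial U_1)&x\in M\setminus U_1,\end{cases}\]
which is $1$-Lipschitz on its domain of definition and satisfies $r\ge\rho$ on $U_2$ (by the defining property of $\rho$), $r=0$ on $\partial U_1$, and $\{r>-D\}$ is exactly the open metric $D$-neighborhood of $U_1$. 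I would then regularize $r$ to a smooth $r_\ve$ with $|\nabla_g r_\ve|\le 1+\eta_1(\ve)$ pointwise by a standard convolution/partition-of-unity or Greene--Wu smoothing argument, with $\eta_1(\ve)\to 0$ as $\ve\to 0$.

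For the profile I would take a template of the form $f(s)=-c/(D+s)$ on $(-D,0]$, which gives $f^2-2|f'|=c(c-2)/(D+s)^2\ge 0$ once $c\ge 2$ and $f\to -\infty$ at $s=-D$. On $[0,\rho]$ I interpolate linearly from $f(0)=-c/D$ up to $f(\rho)=0$, extend by $f\equiv 0$ on $[\rho,\infty)$, and mollify near the corners $s=0$ and $s=\rho$ so that $f$ is globally smooth while perturbing its values and derivatives by an amount $\eta_2(\ve)$. Setting $h=f(r_\ve)$ (and $h=-\infty$ where $r_\ve\le -D$) immediately yields (1) and (2). For (3), the chain rule gives $|\nabla_g h|\le (1+\eta_1(\ve))\,|f'(r_\ve)|$, so the ODE bound propagates to nonnegativity on the outer region $\{-D<r_\ve\le 0\}\supset M\setminus U_1$, while on $U_2$ we have $h\equiv 0$ and the inequality is trivial. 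On the transition annulus $U_1\setminus U_2$ (where $r_\ve\in[0,\rho]$) the worst-case bounds $|f'|\lesssim c/(D\rho)$ and $|f|\lesssim c/D$ yield a lower bound of the form
\[ h^2-2|\nabla_g h|\ \ge\ -\frac{C_1}{D^2}-\frac{C_2}{D\rho}+\eta(\ve),\]
where the constants $C_1,C_2$ are explicit in $c$ and can be tuned to match $\sigma(\rho,D)$ as stated.

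The main obstacle is the combined bookkeeping of three distinct smoothing errors: (i) regularizing the distance function $r$ with near-unit Lipschitz constant, (ii) mollifying the two corners of $f$ (at $s=0$ where the hyperbolic and linear regimes meet with mismatched slopes $c/D^2$ versus $c/(D\rho)$, and at $s=\rho$ where $f$ becomes identically zero), and (iii) maintaining the strict ODE inequality $f^2-2|f'|\ge 0$ on $[-D,0]$ with enough buffer to absorb the $\eta_1$ loss from (i). Each step is standard individually, and the $\eta(\ve)$ on the right-hand side of the conclusion is precisely the sum of these controllable errors, which can be made arbitrarily small by shrinking the regularization and mollification scales.
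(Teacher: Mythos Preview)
Your approach is correct and closely parallels the paper's, but with a different decomposition that is worth comparing. You build $h=f\circ r_\ve$ as a single profile (hyperbolic on $(-D,0]$, linear on $[0,\rho]$, zero beyond) composed with a smoothed \emph{signed} distance to $\partial U_1$. The paper instead writes $h=\vartheta\cdot h_1$ as a \emph{product}: here $h_1=4/(\rho_1-D/2)$ is the hyperbolic profile of a one-sided smoothed distance $\rho_1$ which is set identically to zero throughout $U_1$ (so $h_1\equiv -8/D$ there, up to $\eta(\ve)$), and $\vartheta$ is a separate cutoff equal to $1$ on $M\setminus U_1$, equal to $0$ on $U_2$, with $|\nabla\vartheta|\le 2/\rho$. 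This product structure cleanly separates the two verifications: on $M\setminus U_1$ one has $h=h_1$ and the ODE inequality $2|\nabla h_1|\le h_1^2$ gives (3) there directly; on $U_2$ one has $h=0$ trivially; and on $U_1\setminus U_2$ the product rule localizes all the loss to the $|h_1|\,|\nabla\vartheta|$ term. Your composition approach works too, but the bookkeeping you flag is genuine: you must place the mollification of the corner at $s=0$ entirely in $\{s>0\}$ (or arrange $r_\ve\le 0$ on $M\setminus U_1$) so that no point of $M\setminus U_1$ lands in the linear regime after smoothing $r$, and likewise shift the blow-up of $f$ slightly inward (e.g.\ to $s=-D+2\ve$) so that property (2) survives the $C^0$ error in $r_\ve$. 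Once this is arranged your constants actually come out sharper than the stated $\sigma(\rho,D)$ (with $c$ just above $2$ one gets roughly $4/D^2+4/(D\rho)$), so the proposition follows a fortiori.
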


In this context, we use the notation $\eta(\ve)$ to simply mean a quantity which tends to $0$ as $\ve\to 0$. 

\begin{rk}\label{rk:distcomp}
In an asymptotically flat manifold, the metric distance $\rho$ can be estimated in terms of the coordinate distance $|x|$ using the pointwise closeness of $g_{ij}$ to $\delta_{ij}$. It is easy to see then that as $r_i\to\infty$, $\rho\to r_2-r_1$. Alternatively, we can define the cutoff function below in terms of $|x|$, and then $\sigma$ will have to involve an estimate for $|\nabla_g|x||$. 
\end{rk}

To prove Proposition \ref{prop:h} first recall the following elementary approximation result. 

\begin{lem}\label{LipApprox}
Let $(M,g)$ be a compact Riemannian manifold and $f:M\to\Bbb R$ a Lipschitz continuous function. For any $\ve>0$ there exists a smooth function $\overline f$ on $M$ such that $\|f-\overline f\|_{W^{1,\infty}(M)}<\ve.$
If $f$ is smooth on some closed set $C$, we can further demand that $f=\overline f$ on $C$.
\end{lem}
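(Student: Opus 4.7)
The plan is to approximate $f$ by convolution, carried out locally through a finite atlas. Fix a finite atlas $\{(U_\alpha,\phi_\alpha)\}_{\alpha=1}^N$ of the compact manifold $M$ with $\phi_\alpha:U_\alpha\to\R^n$, together with a smooth subordinate partition of unity $\{\chi_\alpha\}$. In each chart, the pushforward $f\circ\phi_\alpha^{-1}$ is Lipschitz on $\phi_\alpha(U_\alpha)\subset\R^n$ and extends to all of $\R^n$ with the same Lipschitz constant (McShane). Convolve the extension with a standard mollifier $\rho_\delta$ at scale $\delta>0$, pull back to $U_\alpha$, and set
$\overline f:=\sum_\alpha\chi_\alpha\cdot\bigl((f\circ\phi_\alpha^{-1})*\rho_\delta\bigr)\circ\phi_\alpha$, which is smooth on $M$.

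The pointwise bound $|f*\rho_\delta-f|\le\Lip(f)\,\delta$ in each chart, combined with $\sum_\alpha\chi_\alpha\equiv 1$, gives $\|f-\overline f\|_{L^\infty(M)}<\ve$ as soon as $\delta$ is small. Convolution preserves Lipschitz constants, so each local mollification has Lipschitz constant $\le\Lip(f)$; after gluing, using $\sum_\alpha\nabla\chi_\alpha=0$ one gets $|\nabla\overline f|\le\Lip(f)+C\|\nabla\chi\|_\infty\|f-\overline f\|_{L^\infty}$, hence $\Lip(\overline f)\le\Lip(f)+\eta(\ve)$ with constants depending only on the fixed data. To additionally enforce $\overline f=f$ on a closed set $C$ where $f$ is smooth, pick a smooth cutoff $\psi$ equal to $1$ on a neighborhood of $C$ and supported inside an open set on which $f$ admits a smooth extension, and replace $\overline f$ by $\psi f+(1-\psi)\overline f$; this is smooth, agrees with $f$ on $C$, and inherits the same bounds up to a factor depending on $\|\nabla\psi\|_{L^\infty}$.

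The principal obstacle is the interpretation of the $W^{1,\infty}$-smallness conclusion. Smooth functions are \emph{not} dense in $W^{1,\infty}$: uniform convergence of derivatives would force $f$ to be essentially $C^1$, which already fails for the prototype $f(x)=|x|$. The statement is therefore to be read as delivering simultaneously $\|f-\overline f\|_{L^\infty(M)}<\ve$ together with $\Lip(\overline f)\le\Lip(f)+\eta(\ve)$—and this is exactly what the application in Proposition~\ref{prop:h} needs, since the strict inequality $h^2-2|\nabla h|\ge\cdots$ is stable under such an approximation: the $L^\infty$-closeness controls $h^2$ to order $\ve$ while the Lipschitz bound controls $|\nabla\overline h|$ pointwise from above.
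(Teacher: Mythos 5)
Your construction is fine, and in fact the paper offers no proof of this lemma at all (it is recalled as an elementary fact and then applied, through Corollary \ref{cutoffdecay} and Proposition \ref{prop:h}, to distance functions), so there is no in-text argument to compare against; the mollification-in-charts route you take (McShane extension, convolution at scale $\delta$, gluing with a partition of unity using $\sum_\alpha\nabla\chi_\alpha=0$, and the patch $\psi f+(1-\psi)\overline f$ near $C$) is the standard one. Your interpretive remark is also correct and worth making explicit: if $\|f-\overline f\|_{W^{1,\infty}}<\ve$ held for every $\ve$ with $\overline f$ smooth, then $\nabla f$ would be an a.e.\ uniform limit of continuous gradients and $f$ would be $C^1$, which fails exactly for the distance functions to which the lemma is applied. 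What the paper actually uses downstream is the weaker pair of conclusions you prove, namely $\|f-\overline f\|_{L^\infty}<\ve$ together with $\Lip(\overline f)\le\Lip(f)+\eta(\ve)$: both Corollary \ref{cutoffdecay} and the proof of Proposition \ref{prop:h} take a smoothing of a $1$-Lipschitz distance function with Lipschitz constant $2$, i.e.\ they only ever invoke this weaker form.

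Two points to tighten. First, convolution preserves the \emph{Euclidean} Lipschitz constant of $f\circ\phi_\alpha^{-1}$ in the chart, not the $g$-Lipschitz constant; converting back costs a factor $1+\omega(\delta)$, where $\omega$ is a modulus of continuity of the $g_{ij}$ on the finitely many charts (the averaged Euclidean gradients are measured with the metric at nearby points). This is harmless as $\delta\to 0$, but it, together with the $\nabla\chi_\alpha$ term you already estimate, is where $\eta(\ve)$ really comes from; the intermediate claim ``Lipschitz constant $\le\Lip(f)$'' is not quite exact. Second, ``$f$ smooth on the closed set $C$'' should be read as smooth on a neighborhood of $C$ (or handled by Whitney extension), and in the patching step the extra gradient term $\|\nabla\psi\|_{L^\infty}\|f-\overline f\|_{L^\infty}$ must be absorbed by choosing $\delta$ small \emph{after} $\psi$ is fixed; in the actual application $C$ lies inside the open set where $f\equiv 0$, at distance of order $\ve$ from its boundary, so $\|\nabla\psi\|_{L^\infty}\lesssim\ve^{-1}$ and this is again harmless.
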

\begin{cor}\label{cutoffdecay}
Let $U_1$ and $U_2$ be open sets in a Riemannian manifold $(M,g)$ with compact closure and $\overline{U_2}\subset U_1$. Then there exists a smooth function $\vartheta$, taking values in $[0,1]$, such that $\vartheta=0$ on $U_2$, $\vartheta=1$ on $U_1^c$, and \[|\nabla_g\vartheta|\le \frac{2}{\dist_g(U_2,\partial U_1).}\]   
\end{cor}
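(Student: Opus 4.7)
The plan is to smooth a Lipschitz ``distance ramp'' between $U_2$ and $M\setminus U_1$.

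First, set $d(x):=\dist_g(x,\overline{U_2})$. This function is $1$-Lipschitz, vanishes on $\overline{U_2}$, and satisfies $d\ge\rho$ on $M\setminus U_1$, because every path from a point of $M\setminus U_1$ to $\overline{U_2}$ must cross $\partial U_1$, where $d\ge\rho$. Consequently, $\tilde\vartheta:=\min(d/\rho,1)$ is $\rho^{-1}$-Lipschitz, takes values in $[0,1]$, equals $0$ on $\overline{U_2}$, and equals $1$ on $M\setminus U_1$.

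Second, apply Lemma~\ref{LipApprox} to $\tilde\vartheta$ on a compact piece of $M$ (obtained by passing to the double of a relatively compact neighborhood $V$ of $\overline{U_1}$ with smooth boundary, extending $\tilde\vartheta$ by $1$ on the other sheet) with closed set $C:=\overline{U_2}\cup(M\setminus U_1)$---on which $\tilde\vartheta$ is locally constant, hence smooth. For any $\delta>0$ this yields a smooth $\vartheta_0$ with $\vartheta_0|_C=\tilde\vartheta|_C$ and $|\nabla\vartheta_0|<\rho^{-1}+\delta$. Since $\vartheta_0\equiv 1$ near $\partial V$, extending by $1$ outside $V$ produces a global smooth function on $M$.

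Third, to enforce the range $[0,1]$---which the $W^{1,\infty}$-approximation may violate slightly inside $U_1\setminus\overline{U_2}$---post-compose with a smooth nondecreasing $\psi:\Bbb R\to[0,1]$ satisfying $\psi\equiv 0$ on $(-\infty,0]$, $\psi\equiv 1$ on $[1,\infty)$, and $\|\psi'\|_\infty$ arbitrarily close to $1$ (achievable with any $\psi$ that is nearly linear on the interior of $[0,1]$ with tiny smooth transitions near the endpoints). Then $\vartheta:=\psi\circ\vartheta_0$ still agrees with $\tilde\vartheta$ on $C$ (so $\vartheta=0$ on $U_2$ and $\vartheta=1$ on $U_1^c$) and satisfies
\[|\nabla\vartheta|\le \|\psi'\|_\infty(\rho^{-1}+\delta)<\frac{2}{\rho}\]
for $\delta$ sufficiently small. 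The principal (mild) obstacle is precisely this bookkeeping: absorbing the $W^{1,\infty}$-approximation error and the composition with $\psi$ into the generous factor $2$ in the gradient bound. Everything else is a routine application of distance functions and the smooth approximation lemma.
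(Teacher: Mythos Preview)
Your proof is correct and follows the same idea as the paper's: build a $\rho^{-1}$-Lipschitz ramp out of a distance function and invoke Lemma~\ref{LipApprox} to smooth it, with the factor of $2$ absorbing the approximation errors. The paper's one-line proof uses the distance to $\partial U_1$ rather than to $\overline{U_2}$ and leaves the range-fixing and compactness issues implicit, whereas you spell them out via the doubling trick and the $\psi$-composition.
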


\begin{proof}
Apply the previous lemma to the distance function to $\partial U_1$ and rescale. 
\end{proof}

\begin{proof}[Proof of Proposition \ref{prop:h}]
Our construction is motivated by consideration of solutions to the ODE $2h'=h^2$. Define 
\[\rho_0= \begin{cases} 
0 & \text{in $U_1$} \\
\dist_g(x,\partial U_1) & \text{in $M\setminus U_1$}
\end{cases}.
\]
Let $\rho_1$ be a smoothing of this function with Lipschitz constant $2$. By Lemma \ref{LipApprox}, we may assume $\rho_1$ to vanish in $U_1\setminus N_\ve (\partial U_1)$, where $N_\ve(\cdot)$ denotes the metric $\ve$-neighborhood. On $N_\ve(\partial U_1),$ $\rho_1=\eta(\ve).$  We also assume $\sup_M|\rho_0-\rho_1|<\frac 12$.

 Now define 
\[h_1=\frac{4}{\rho_1-\frac{D}{2}}.\]
Note first that $h_1=-8/D$ in $U_1\setminus N_\ve(\partial U_1)$ and $|h_1+8/D|=o(\ve)$ in $N_\ve(\partial U_1)$. Secondly, note that $h_1=-\infty$ in the region of distance $D$ away from $U_1$ in $M\setminus U_1$. Thirdly, we compute
\[|\nabla h_1|=\left|\frac{-4}{(\rho_1-\frac D2)^2}\nabla\rho_1\right|\le \frac{8}{(\rho_1-\frac D2)^2}=\frac{h^2_1}{2}.\]
Next, we define $ h=\vartheta h_1$, where $\vartheta$ is as in Corollary \ref{cutoffdecay} and compute (in the region where $\vartheta$ is variable, i.e. $U_1\setminus U_2$)
\begin{align*}
 h^2-2|\nabla h|	&=\vartheta^2h^2_1-2|\vartheta\nabla h_1+h_1\nabla\vartheta|\\
	&\ge -2\vartheta|\nabla h_1|- 2|h_1||\nabla\vartheta| \\
	&\ge -4\left(\frac{8}{D}+\eta(\ve)\right)^2-2\left(\frac{8}{D}+\eta(\ve)\right)\frac{2}{\rho}\\
	&=-\sigma(\rho,D)+\eta(\ve). \qedhere
\end{align*}
\end{proof}

\begin{rk}
By making a small perturbation of $h$, leaving the necessary inequalities satisfied, we can ensure $\partial\{|h|<\infty\}$ is a compact smooth hypersurface without boundary. We make this assumption. 
\end{rk}

We see that if $R_g>0$ everywhere and $R_g>\sigma(\rho,D)$ on $\overline{U_1}\setminus U_2$, then 
\begin{equation}
    R_g+h^2-2|\nabla_gh|>0
\end{equation}
for $\ve$ sufficiently small, which we call the \emph{(strict) $\mu$-bubble condition} for $h$. If merely
\begin{equation}
    R_g+h^2-2|\nabla_gh|\ge 0,
\end{equation}
then $h$ satisfies the \emph{weak $\mu$-bubble condition}.

\section{Proof of Theorem \ref{thm2} with an extra assumption}\label{sec:thm2}

\subsection{Construction of $\Sigma_\infty$} We single out a coordinate $x^n$ on $\mathcal E$, split the coordinates as $x=(x',x^n)$, and define 
\begin{align*}
    Z_\sigma&= \{x:|x'|\le \sigma\}\\
  C_\sigma  &=\partial(Z_\sigma\cap\{x:|x^n|\le a_0\})\\
   \Gamma_{\sigma,a} &=\{ x: |x'|=\sigma, x^n=a\},
\end{align*}
where $a_0>0$ is a constant that will be chosen momentarily. Clearly $Z_\sigma$ is mean convex for $\sigma$ sufficiently large and it is well known that the coordinate ``hyperplanes" $\{x^n=a_0\}$ are strictly mean convex for sufficiently large $a_0$ under the assumption $m<0$. We prove this in Lemma \ref{sandwich} below. We choose $a_0$ so that this is true. It follows that $C_\sigma$ is a mean convex hypersurface (with corners that can be smoothed out in a mean convex fashion).

Furthermore, we choose $P_{\sigma,a}$ to be the bottom ``hemisphere" of $C_\sigma\setminus \Gamma_{\sigma,a}$. Let $h$ be the function constructed in Proposition \ref{prop:h}. By construction, $h|_{C_\sigma} =0$. Let $\Sigma_{\sigma,a}$ be the hypersurface given by Theorem \ref{existence} with these choices. Let $\mathcal F_{\sigma}$ be the functional defined by \eqref{F}.

\begin{lem}
Fix $\sigma$. Then $a\mapsto \mathcal F_\sigma(\Sigma_{\sigma,a})$ is Lipschitz continuous for $|a|\le a_0$. 
\end{lem}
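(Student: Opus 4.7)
The plan is to build a competitor for the Dirichlet problem at $a_2$ out of the minimizer at $a_1$ by bridging the boundary through a thin strip of the coordinate cylinder $\{|x'|=\sigma\}$, observe that the enclosed open set is unchanged, and then invoke the minimality of both $\Sigma_{\sigma,a_1}$ and $\Sigma_{\sigma,a_2}$ to obtain Lipschitz bounds in both directions.

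Concretely, fix $-a_0\le a_1<a_2\le a_0$ and consider the annular strip
\[A=A_{a_1,a_2}=\{x\in\mathcal E:|x'|=\sigma,\ a_1\le x^n\le a_2\},\]
so that $\partial A=\Gamma_{\sigma,a_2}-\Gamma_{\sigma,a_1}$. Because $g$ is asymptotically Schwarzschild, the induced metric on $\{|x'|=\sigma\}$ is uniformly comparable to the flat one, which yields the crude bound $\mathcal H^{n-1}(A)\le C(\sigma)|a_2-a_1|$. Now set $\tilde\Sigma=\Sigma_{\sigma,a_1}\cup A$; after smoothing the corner along $\Gamma_{\sigma,a_1}$ by an arbitrarily small perturbation, $\tilde\Sigma$ is a smooth hypersurface with $\partial\tilde\Sigma=\Gamma_{\sigma,a_2}$.

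I next verify that $\tilde\Sigma$ is admissible for the problem at $a_2$ and that $U_{\tilde\Sigma}=U_{\Sigma_{\sigma,a_1}}$. As chains in $\partial_0 M$ we have $P_{\sigma,a_2}=P_{\sigma,a_1}+A$, and inside $\tilde\Sigma+P_{\sigma,a_2}$ the two copies of $A$ appear with opposite induced orientations (once from $\tilde\Sigma$, once from $P_{\sigma,a_2}$) and therefore cancel. Hence $\tilde\Sigma+P_{\sigma,a_2}$ is homologous to $\Sigma_{\sigma,a_1}+P_{\sigma,a_1}$, and the uniqueness clause of Lemma \ref{topological} identifies $U_{\tilde\Sigma}=U_{\Sigma_{\sigma,a_1}}$. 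Since the bulk term depends only on the enclosed open set, it is unchanged and
\[\mathcal F_\sigma(\tilde\Sigma)=\mathcal F_\sigma(\Sigma_{\sigma,a_1})+\mathcal H^{n-1}(A)\le \mathcal F_\sigma(\Sigma_{\sigma,a_1})+C(\sigma)|a_2-a_1|.\]
By minimality of $\Sigma_{\sigma,a_2}$ for the functional $\mathcal F_\sigma$ with boundary data $\Gamma_{\sigma,a_2}$, we conclude
\[\mathcal F_\sigma(\Sigma_{\sigma,a_2})\le \mathcal F_\sigma(\Sigma_{\sigma,a_1})+C(\sigma)|a_2-a_1|,\]
and the symmetric construction, using $\Sigma_{\sigma,a_2}\cup A$ as a competitor for the $a_1$ problem, yields the reverse inequality.

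The only genuinely subtle step is the orientation bookkeeping in the identification $U_{\tilde\Sigma}=U_{\Sigma_{\sigma,a_1}}$: one must rule out the a priori possibility that $\tilde\Sigma\cup P_{\sigma,a_2}$ bounds a region picking up extra volume on the far side of the cylinder $\{|x'|=\sigma\}$. This is precisely what the uniqueness assertion of Lemma \ref{topological} excludes, since $\tilde\Sigma+P_{\sigma,a_2}$ has a unique bounded complement and the complement of $\Sigma_{\sigma,a_1}+P_{\sigma,a_1}$ trivially works. Granted that step, the constant $C(\sigma)$ depends only on $\sigma$ and the asymptotic geometry, not on $a_1,a_2\in[-a_0,a_0]$, giving the claimed Lipschitz continuity.
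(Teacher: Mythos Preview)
Your proof is correct and takes essentially the same approach as the paper: build a competitor for one height from the minimizer at the other by gluing on the cylindrical strip $A=\{|x'|=\sigma,\ a_1\le x^n\le a_2\}$, then use minimality and swap roles. The only cosmetic difference is that you attach $A$ to $\Sigma_{\sigma,a_1}$ to compete at height $a_2$, whereas the paper attaches it to $\Sigma_{\sigma,a_2}$ to compete at height $a_1$; your version is in fact slightly more careful in verifying $U_{\tilde\Sigma}=U_{\Sigma_{\sigma,a_1}}$ via Lemma~\ref{topological}, which the paper leaves implicit.
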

\begin{proof}
Let $a_1<a_2$ and denote $\Sigma_i=\Sigma_{\sigma,a_i}$. Define a competitor surface by 
\[\tilde \Sigma = \Sigma_2\cup \bigcup_{s=a_1}^{a_2} \Gamma_{\sigma,s}.\]
(To be rigorous, one should push this in slightly so it lies in the interior, and this only adds an $\ve$ to the following inequalities that can be sent to zero in the end, so we will ignore it.) Since $\Sigma_1$ is minimizing and $\partial \tilde\Sigma =\Gamma_{\sigma,a_1}$,
\[\mathcal F_\sigma(\Sigma_1)\le \mathcal F_\sigma(\tilde\Sigma)= \mathcal F_\sigma(\Sigma_2)+\mathcal H^{n-1} \left(\bigcup_{h=a_1}^{a_2} \Gamma_{\sigma,h}\right).\]
From asymptotic flatness, 
\[\mathcal H^{n-1}\left(\bigcup_{s=a_1}^{a_2} \Gamma_{\sigma,s}\right)\lesssim |a_2-a_1|.\]
Interchanging $a_1$ and $a_2$ and running the same argument gives altogether
\[|\mathcal F_\sigma(\Sigma_{1})-\mathcal F_\sigma(\Sigma_2)|\lesssim |a_2-a_1|.\qedhere\]
\end{proof}

\begin{figure}
    \centering
    \includegraphics[width=8.5cm]{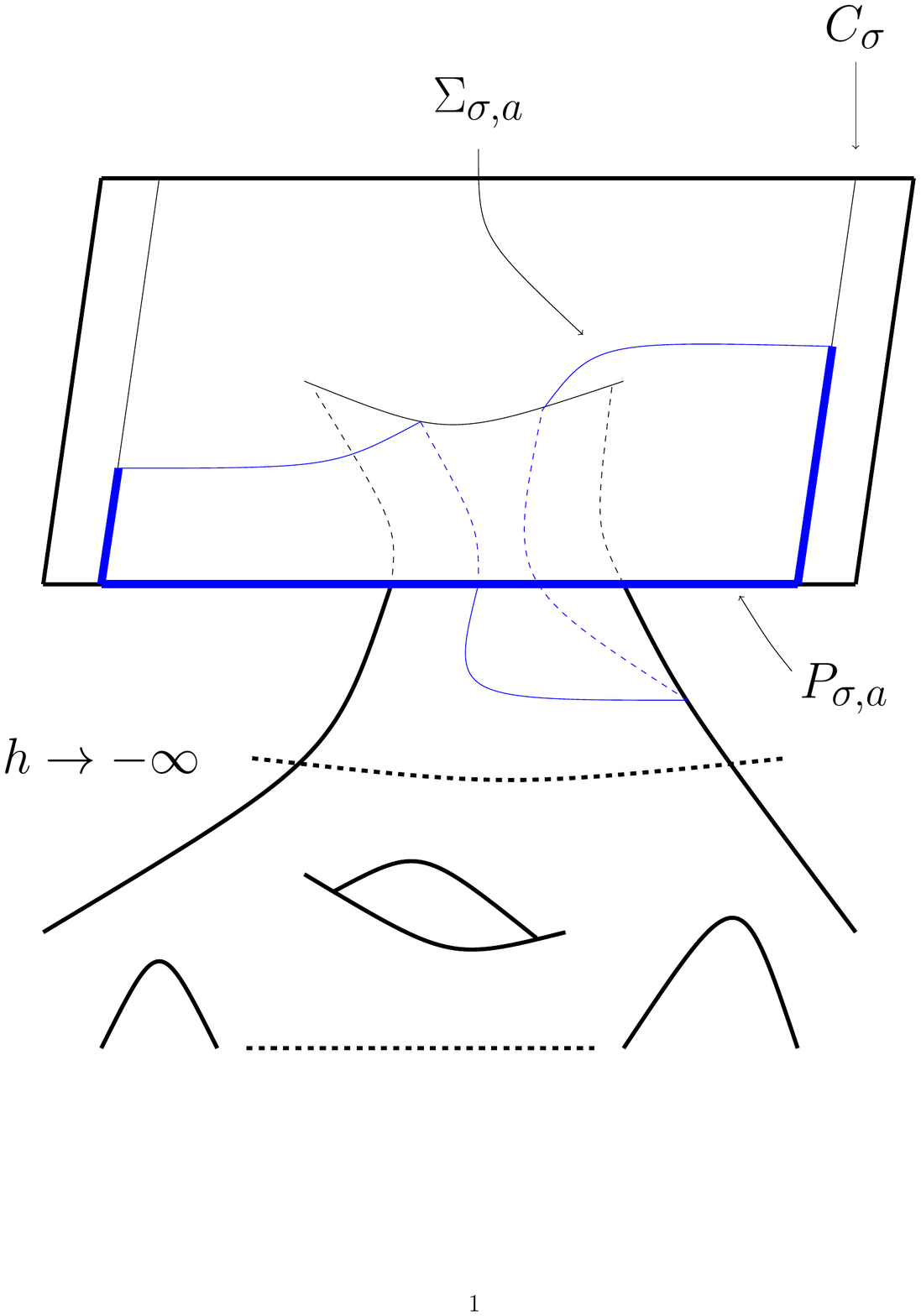}
    \caption{}
    \label{Fig.2}
\end{figure} 

\begin{lem}\label{lem:heights}
$F(\sigma)=\min\{F_{\sigma}(\Sigma_{\sigma,a}):|a|\le a_0\}$ is achieved by an $a\ne \pm a_0$.
\end{lem}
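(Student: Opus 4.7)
The plan is to argue by contradiction. Suppose $F(\sigma)$ is attained at $a_\ast=a_0$ (the case $a_\ast=-a_0$ is symmetric). Let $\Psi_\delta$ be an ambient isotopy of $M$ supported in a thin collar of $\Gamma_{\sigma,a_0}$ which slides $\Gamma_{\sigma,a_0}$ down the side cylinder $\{|x'|=\sigma\}$ by $\delta$ (in the $-\partial_n$ direction), and set $\Sigma^\delta:=\Psi_\delta(\Sigma_{\sigma,a_0})$; this is an admissible competitor for the $a=a_0-\delta$ problem. Since the slab $Z_\sigma\cap\{a_0-\delta\le x^n\le a_0\}$ lies in $U_2$ for $\sigma$ large, $h\equiv 0$ there; combined with $H_{\Sigma_{\sigma,a_0}}=h=0$ in the collar, the first variation formula reduces to a pure boundary term:
\[
\mathcal F_\sigma(\Sigma^\delta)-\mathcal F_\sigma(\Sigma_{\sigma,a_0})
=-\delta\int_{\Gamma_{\sigma,a_0}}\langle\partial_n,\eta\rangle\,d\mathcal H^{n-2}+O(\delta^2),
\]
where $\eta$ is the outward conormal of $\Sigma_{\sigma,a_0}$ along $\Gamma_{\sigma,a_0}$. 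If the strict positivity
\[
\int_{\Gamma_{\sigma,a_0}}\langle\partial_n,\eta\rangle\,d\mathcal H^{n-2}>0
\]
holds, then $\mathcal F_\sigma(\Sigma_{\sigma,a_0-\delta})\le\mathcal F_\sigma(\Sigma^\delta)<\mathcal F_\sigma(\Sigma_{\sigma,a_0})$ for $\delta$ small, contradicting the supposed minimality at $a_0$.

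To establish this strict positivity, I invoke Lemma \ref{sandwich}: under $m<0$ and the stipulated choice of $a_0$, the horizontal disk $T_{a_0}=\{x^n=a_0\}\cap Z_\sigma$ is strictly mean convex with $H_{T_{a_0}}\le -c<0$ relative to $+\partial_n$. Since $\Sigma_{\sigma,a_0}$ has mean curvature $0>-c$ and shares its boundary with $T_{a_0}$, the strong maximum principle forces the interior of $\Sigma_{\sigma,a_0}$ to lie strictly below $T_{a_0}$. Representing $\Sigma_{\sigma,a_0}$ as a graph $x^n=u(x')$ over $T_{a_0}$ in a collar of $\Gamma_{\sigma,a_0}$ (so $u\le a_0$ with equality on $\Gamma_{\sigma,a_0}$), the function $v=a_0-u$ is nonnegative, vanishes on $\Gamma_{\sigma,a_0}$, and satisfies a uniformly elliptic inequality $Lv\ge c>0$ obtained by comparing the minimal surface equation for $\Sigma_{\sigma,a_0}$ with the mean curvature of $T_{a_0}$. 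The Hopf boundary lemma then delivers $\partial_r v<0$ strictly along $\Gamma_{\sigma,a_0}$, equivalently $\partial_r u>0$, which is precisely $\langle\partial_n,\eta\rangle>0$.

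The main technical obstacle is the boundary regularity of $\Sigma_{\sigma,a_0}$ needed to justify the graph representation and the sharp first variation formula at the boundary; this is supplied by the standard free-boundary regularity theory for almost-minimizers in the dimensions $n\le 7$. In the degenerate case that $\Sigma_{\sigma,a_0}$ meets the side cylinder tangentially along $\Gamma_{\sigma,a_0}$, the outward conormal $\eta$ is parallel to $\partial_n$ and the strict positivity is immediate from orientation alone.
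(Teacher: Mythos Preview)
Your argument is essentially the same as the paper's (which simply cites Schoen \cite{SchoenPMT}): show that a minimizer with boundary at height $\pm a_0$ can have its value strictly decreased by sliding the boundary inward, using the mean convexity of the extreme hyperplanes. You have simply unpacked Schoen's argument in detail, supplying the first variation computation and the Hopf lemma step that yields the strict sign on $\int_\Gamma\langle\partial_n,\eta\rangle$. The overall logic is correct.

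A few minor points: (i) Your sign for $H_{T_{a_0}}$ is opposite to the paper's (Lemma~\ref{sandwich} gives $H_g>0$ with respect to the upward normal, not $\le -c$); this is a convention mismatch and your comparison argument still goes through once the signs are aligned. (ii) The phrase ``free-boundary regularity'' is not quite apt---this is a fixed (Dirichlet) boundary problem, so the relevant input is Allard's boundary regularity / Hardt--Simon. (iii) Your claim that ``the slab $Z_\sigma\cap\{a_0-\delta\le x^n\le a_0\}$ lies in $U_2$'' is false as stated (it contains points with $|x'|$ small), but what you actually need---that $h\equiv 0$ on the thin collar of $\Gamma_{\sigma,a_0}$ where the isotopy is supported---is true for $\sigma$ large. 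None of these affect the validity of the proof.
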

\begin{proof}
Any surface with boundary $\Gamma_{\sigma,\pm a_0}$ can be pushed up or down slightly to decrease its area, see the original proof of Schoen \cite{SchoenPMT}. Since our functionals $\mathcal F_\sigma$ are just the area functional in a neighborhood of $C_\sigma$, this argument carries through to decrease $\mathcal F_\sigma$. 
\end{proof}

If $a(\sigma)$ is the minimizing $a$ produced by the previous lemma, we set  $\Sigma_\sigma=\Sigma_{\sigma,a(\sigma)}.$ We now study the limit $\sigma\to \infty$. 

It is now convenient to introduce some more notation. Let 
$H=\{x^n=0\}$ be a hyperplane in $\Bbb R^n$ and $p:\Bbb R^n\to H$ the orthogonal projection. For the rest of this section we delete from $M$ everything behind one of the surfaces $S_-^\tau$ in the proof of Theorem \ref{existence}. The point is that each $\Sigma_{\sigma,a}$ we consider lies in this region, which is compact if we one-point compactify $\mathcal E$. Let $R_0$ be a radius so that $h=0$ for any $x\in \mathcal E$ with $|x|\ge R_0$.  We define $\mathcal E'=\mathcal E\cap \{|x'|\ge R_0\}$, so $\mathcal E'$ is the end minus a cylindrical region containing the set where $h$ is variable. We can clearly define $p:\mathcal E'\to H':=H\setminus B_{R_0}(0)$, where we now view $H'$ as a submanifold of $M$.  

\begin{lem}\label{surjective}
The orthogonal projection $p:\Sigma\cap\{R_0<|x'|<\sigma\}\to H'\cap \{R_0<|x'|<\sigma\}$ is surjective.
\end{lem}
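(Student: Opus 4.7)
The approach is a vertical-line/intermediate-value argument leveraging Lemma \ref{topological}. Fix $y=(y',0)$ with $R_0<|y'|<\sigma$ and consider the coordinate line $\ell_y=\{(y',t):t\in\Bbb R\}\subset\mathcal E'$. Since $|y'|<\sigma$, the point $(y',-a_0)$ lies in the bottom disk of the solid cylinder $Z_\sigma\cap\{|x^n|\le a_0\}$, which is part of $P_{\sigma,a(\sigma)}$ and hence of $\partial U_\Sigma$. The plan is to show that $\ell_y$ enters $U_\Sigma$ just above $(y',-a_0)$, then to track $\ell_y$ upward until it first leaves $U_\Sigma$, and finally to argue that the exit point must sit on $\Sigma$.

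From the definition of $P$ as the bottom hemisphere of $C_\sigma\setminus\Gamma_{\sigma,a(\sigma)}$, the homology class $\Sigma\sim P$ built into Theorem \ref{existence}, and the mean convexity of $C_\sigma$, I expect $U_\Sigma$ to be the chunk of the solid cylinder trapped between $P$ below and $\Sigma$ above. Granting this, $\ell_y$ enters $U_\Sigma$ at $t=-a_0^+$, and since $\ell_y$ leaves the solid cylinder by $t=a_0$, there is a smallest $t_\ast\in(-a_0,a_0]$ with $(y',t_\ast)\in\partial U_\Sigma=\Sigma\cup P$. This point cannot lie on $P$: the bottom disk is at $t=-a_0$, already behind, and the lateral piece of $P$ lies in $\{|x'|=\sigma\}$, which $\ell_y$ avoids since $|y'|<\sigma$. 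Therefore $(y',t_\ast)\in\Sigma$, and since $\partial\Sigma=\Gamma_{\sigma,a(\sigma)}\subset\{|x'|=\sigma\}$, it is an interior point of $\Sigma$ with $|x'|=|y'|\in(R_0,\sigma)$, so $y=p(y',t_\ast)$ lies in the desired image.

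The main obstacle is confirming the orientation claim that $\{(y',t):t>-a_0\}$ initially lies inside $U_\Sigma$, equivalently that $U_\Sigma$ sits inside the closed solid cylinder on the ``above $P$'' side. This rests on the specific homology constraint $\Sigma\sim P$ defining the bubble problem---which forces $\Sigma$ to span the tube rather than escape into another end---together with the mean-convex barrier provided by $C_\sigma$, which prevents $U_\Sigma$ from containing points with $x^n<-a_0$. Once this orientation is pinned down, the remainder is routine connectedness combined with the compactness of $\overline{U_\Sigma}$.
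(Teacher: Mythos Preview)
Your approach is genuinely different from the paper's. The paper argues with currents and the constancy theorem: it caps $\Sigma'=\Sigma\cap\mathcal E'$ by the flat piece $T=\{|x'|\ge\sigma,\;x^n=a(\sigma)\}$, notes that $T+\Sigma'$ is a cycle in $\mathcal E'$, pushes forward by $p$, and invokes constancy to get $p_\#(T+\Sigma')=c\,\current{H'}$; the region $|x'|>\sigma$ forces $c=1$, whence every fiber $p^{-1}(y')$ with $R_0<|y'|<\sigma$ must meet $\Sigma$. This is a degree-one statement and sidesteps any discussion of $U_\Sigma$.

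Your vertical-line idea is natural, but the gap you flag is real and your proposed fix does not close it. The assertion that ``$U_\Sigma$ sits inside the closed solid cylinder'' is not justified and is generally false: nothing prevents $\Sigma$ (and hence $U_\Sigma$) from extending into the core---indeed the whole point of the $\mu$-bubble construction is only to keep $\Sigma$ away from $\partial_-M$, not to confine it to the cylinder. So the sentence ``since $\ell_y$ leaves the solid cylinder by $t=a_0$, there is a smallest $t_*\ldots$'' does not follow. The mean-convex barrier for $C_\sigma$ only tells you $\Sigma$ stays in $M$; it says nothing about which of the two components of $M\setminus(\Sigma\cup P)$ contains the top disk. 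What you actually need is that the top hemisphere of $C_\sigma$ and the region just above the bottom disk lie in \emph{different} components of $M\setminus(\Sigma\cup P)$. This can be extracted from the orientation convention implicit in Lemma~\ref{topological} (that $\partial\current{U_\Sigma}=\current{\Sigma}-\current{P}$ as oriented currents, so the normal to $P\subset\partial M$ points \emph{into} $U_\Sigma$ while the top hemisphere, not lying in $\partial U_\Sigma$, must sit in the complement), but that is precisely the kind of current-theoretic bookkeeping you were hoping to avoid, and it is not what your ``mean-convex barrier'' remark provides. Without it the bad scenario---$\Sigma$ dipping through the core so that the vertical segment over some $y'$ misses $\Sigma$ entirely while remaining in a single component---is not ruled out by your argument.
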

\begin{proof}
Delete from $\Sigma_\sigma$ everything outside of $\mathcal E'$; denote the resulting current by $\Sigma'$. Note that $\partial\Sigma'=\Gamma_{\sigma,a(\sigma)}$ as currents in $\mathcal E'$. Let $T$ be the multiplicity 1 current in $\mathcal E'$ whose underlying set is given by $\{(x',x^n):|x'|\ge \sigma, x^n=a(\sigma)\}.$ Then $\partial(T+\Sigma')=0$ and \[\partial p_\#(T+\Sigma')=p_\#\partial (T+\Sigma')=0.\] By the constancy theorem \cite[Theorem 26.27]{Simon}, there exists $c\in \Bbb R$ such that \[p_\# (T+\Sigma')=c\current{H'}.\]
Looking at radii larger than $\sigma$ implies $c=1$. Now it is easy to see that if $\Sigma'$ cannot miss some fiber $p^{-1}(x')$ with $|x'|<\sigma$. 
\end{proof}

\subsection{Asymptotics of $\Sigma_\infty$}\label{sec:asymptotics}

We work with the following functional spaces. 

\begin{defn}[Weighted H\"older spaces]\label{weightedhol}
Let $\mathcal E$ be a Euclidean end of a manifold $(M^n,g)$ admitting a structure of infinity as per Definition \ref{maindef}. For $k\in \Bbb N_0$, $\alpha\in (0,1)$, and $\tau \in\Bbb R$, we define the \emph{weighted H\"older space} $C^{k,\alpha}_\tau(\mathcal E)$ as the set of all functions $u\in C^{k,\alpha}_\loc$ for which the norm 
\[\|u\|_{C^{k,\alpha}_\tau}=\sum_{i=0}^k\sup_{x\in\mathcal E}||x|^{i+\tau}\partial^i u|+\sup_{x\in\mathcal E}|x|^{k+\alpha+\tau}[\partial^ku]_{C^\alpha(B_1(x))}\] is finite. Note that with our notation, $\tau>0$ corresponds to \emph{decay}, which is opposite from \cite{Lee}. Under pointwise multiplication, $C^{2,\alpha}_{\mu}\cdot C^{2,\alpha}_\nu \subset C^{2,\alpha}_{\mu+\nu}$, continuously. We will frequently use the fact that $|x|^{-\tau}\in C^{k,\alpha}_{\tau}$ for any $k$ and $\alpha$. By definition, $\partial(C^{k,\alpha}_\tau)\subset C^{k-1,\alpha}_{\tau +1}$ continuously. 
\end{defn}

Finally, we use the notation $A*B$ to denote contractions and $\Bbb R$-linear combinations of quantities. The decay rates can be easily read off from this notation. 

Now we study the convergence of the $\Sigma_\sigma$'s and the asymptotic behavior of the limiting hypersurface. 

\begin{thm}\label{maindecay}
There exists a sequence $\sigma_j\to\infty$ such that $\Sigma_{\sigma_j}$ converges in $C^\infty_\loc$ to a complete, smooth hypersurface $\Sigma_\infty$ with mean curvature $h$. Furthermore, there exist constants $a_\infty, c_\infty$ such that for any $\ve \in (0,1)$, $\Sigma_\infty\cap \mathcal E'$ can be written as a Euclidean graph $x^n=u(x')$ for some smooth function $u$ satisfying 
\begin{align*}
    u(x')&-a_\infty \in C^{3,\alpha}_{\ve}(H')\quad\text{$n=3$},\\
    u(x')&-\left(a_\infty+\frac{c_\infty}{|x'|^{n-3}}\right)\in C^{3,\alpha}_{n-3+\ve}(H') \quad\text{$n\ge 4$},
\end{align*}
where now $\alpha$ is the minimum of the $\alpha$ in Definition \ref{maindef} and $\frac 1n$.
\end{thm}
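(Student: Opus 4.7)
The plan is to extract $\Sigma_\infty$ by compactness, identify it as a smooth $\mu$-bubble, upgrade it to a Euclidean graph in the end via barrier and curvature estimates, and then analyze the asymptotics of the graphing function by bootstrapping the minimal surface equation in weighted H\"older spaces.

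First I would establish compactness. Each $\Sigma_\sigma$ is a minimizer of $\mathcal F_\sigma$ in the admissible class of Theorem \ref{existence}, and the collar replacement in its proof confines $\Sigma_\sigma$ to a region where $|h|\lesssim 1$. Using the coordinate slab $\{x^n=a(\sigma)\}$ as a competitor yields uniform mass bounds $\mathbf M(\Sigma_\sigma\cap K)\lesssim 1$ on every compact $K$ avoiding $\{h=-\infty\}$, so the BV compactness theorem produces a subsequential Caccioppoli limit $\Sigma_\infty$, which the interior regularity theory for almost-minimizers in dimensions $n\le 7$ promotes to $C^\infty_\loc$ convergence on the interior. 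On the region where $h=0$, namely the coordinate exterior $|x'|\ge R_0$, $\Sigma_\infty$ is a stable minimal hypersurface; elsewhere it has prescribed mean curvature $h$.

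Next I would upgrade the limit to a graph over $H'$. By Lemma \ref{surjective} applied to each $\Sigma_{\sigma_j}$, the projection $p\colon\Sigma_{\sigma_j}\cap\{R_0<|x'|<\sigma_j\}\to H'\cap\{R_0<|x'|<\sigma_j\}$ is surjective, and this passes to $\Sigma_\infty$. I would sandwich $\Sigma_\infty\cap\mathcal E'$ between mean-convex coordinate hyperplanes $\{x^n=\pm a_0\}$, whose strict mean convexity (Lemma \ref{sandwich} below, relying on $m<0$) acts as a maximum principle barrier for the $\Sigma_\sigma$ and hence for $\Sigma_\infty$. Combined with the Schoen--Simon curvature estimate for stable minimal hypersurfaces (valid on the end since the ambient $|\Rm|$ is bounded and $\Sigma_\infty$ is minimizing there), this yields uniform two-sided gradient bounds for $p$ at every preimage point, which rules out multi-sheeting and makes $\Sigma_\infty\cap\mathcal E'$ a Lipschitz graph $x^n=u(x')$. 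Smoothness of $u$ follows from elliptic regularity for the minimal surface equation.

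For the asymptotic expansion, $u$ satisfies a quasilinear elliptic equation which we write schematically as
\begin{equation}\label{planMSE}
\Delta_\delta u = (g-\delta)*\partial^2 u + \partial g * (1 + \partial u * \partial u),
\end{equation}
where the coefficients are smooth in $u$ and $\nabla u$ and inherit the Schwarzschild decay of $g$. Since $g-\delta\in C^{2,\alpha}_{n-2}$, and once $|u|\le a_0$ and $|\nabla u|$ are uniformly small, the right hand side of \eqref{planMSE} lies in $C^{0,\alpha}_{\mu+2}$ whenever $\partial^2 u\in C^{0,\alpha}_{\mu+2}$ for $\mu$ below the first indicial root. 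The indicial roots of $\Delta_\delta$ on the exterior domain $H'\subset\Bbb R^{n-1}$ are $0$ and $-(n-3)$, so a standard weighted Schauder/Fredholm bootstrap on exterior domains gives: $u-a_\infty$ decays at any rate below $n-3$, which is the $n=3$ statement; for $n\ge 4$ one further peels off the harmonic mode $c_\infty|x'|^{-(n-3)}$, with $c_\infty$ determined by pairing the right hand side against the Green's function, leaving a remainder in $C^{3,\alpha}_{n-3+\ve}(H')$. The loss $\ve$ and the restriction $\alpha\le 1/n$ come from how products of decaying factors in \eqref{planMSE} marginally degrade the weight and H\"older exponent.

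The main obstacle is the graph step: surjectivity of $p$ from Lemma \ref{surjective} is cheap, but ruling out multi-sheeting and producing uniform $C^1$ control of $u$ at infinity requires the stability of $\Sigma_\sigma$ (inherited from the minimizing property and the second variation formula in Proposition \ref{prop:2ndvar}), Schoen--Simon curvature estimates, and checking that these pass cleanly through the $C^\infty_\loc$ limit. A secondary subtlety is that $a_\infty$ and $c_\infty$ must be genuine asymptotic invariants of $\Sigma_\infty$ rather than subsequential accidents; this is handled by the uniqueness of the leading terms in the weighted Fredholm decomposition.
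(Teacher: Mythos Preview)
Your compactness and graphicality sketch is broadly aligned with the paper, though the paper obtains the graph via Allard's theorem (applied after comparing $|A_g|$ and $|A_\delta|$ to get $|H_\delta|=O(|x'|^{-(n-1)})$) rather than by directly ruling out multi-sheeting from curvature bounds. That route is also where the restriction $\alpha\le\tfrac1n$ actually originates: Allard with $p=n$ gives $C^{1,1/n}$ control of $u$, not the product-degradation mechanism you describe.

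The real gap is in your asymptotic step. Your schematic equation has forcing $\partial g*(1+\partial u*\partial u)$, and with $g-\delta\in C^{2,\alpha}_{n-2}$ the constant piece $\partial g$ decays at rate exactly $n-1$. Meyers' theorem on the $(n-1)$-dimensional manifold $H'$ requires forcing in $C^{k,\alpha}_{n-1+\ve}$, i.e.\ strictly better than $|x'|^{-(n-1)}$. Your forcing sits precisely on the borderline, and no amount of bootstrapping in $\partial^2 u$ improves the constant term. This is the point the paper singles out explicitly: ``Naively, we only get a decay rate of $n-1$.''

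The paper's resolution is the conformal decomposition $g=\psi^{4/(n-2)}\overline g$ with $\overline g-\delta\in C^{2,\alpha}_{n-1}$ (Proposition~\ref{table}), which buys one extra order: $\Gamma_{\overline g}\in C^{1,\alpha}_n$. After conformally transforming the mean curvature equation, the only surviving term at rate $n-1$ is $\overline g^{n\nu}\partial_\nu\log\psi$, and the paper observes that on the graph $\partial_n\psi= c\,u/|x|^n$, so restriction picks up an extra factor of $u$ (bounded, then decaying) and pushes this term to rate $n$. Without isolating the Schwarzschild conformal factor from the higher-order correction $\overline g$, your forcing remains critical and the weighted Fredholm argument does not close.
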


The proof occupies the rest of this section. The locally smooth convergence of the hypersurfaces $\Sigma_{\sigma_j}$ for some sequence $\sigma_j\to\infty$ follows from standard theory \cite{DS02, Maggi, Simon} as soon as we note that a local mass bound,  follows from the fact that it minimizes a prescribed mean curvature-type functional in a compact subset of the core and the area functional in the asymptotic region. Strictly speaking, we must also show that the mass is locally bounded below so that $\Sigma_\infty\ne 0$. This follows straightforwardly from Lemma \ref{surjective}.

The following is the closest we can get to harmonic asymptotics without a density theorem. It says that we still have conformal flatness modulo strictly lower order terms, which is crucial for our analysis. 

\begin{prop}\label{table}
Any asymptotically Schwarzschild metric $g$ can be written in the form 
\[g=\psi^\frac{4}{n-2}\overline g, \]
where 
\[\psi(x)=1+\frac{m}{2|x|^{n-2}}\]
is the conformal factor of the Schwarzschild metric and 
\[\overline g_{ij}-\delta_{ij}\in C^{2,\alpha}_{n-1}.\]
With respect to these metrics, we have the following decay rates for components of various quantities in the asymptotic coordinate system:
\begin{table}[h]\label{decays}

\begin{tabular}{l|l|l|}
\cline{2-3}
                                    & $g$ & $\overline g$ \\ \hline
 \multicolumn{1}{|l|}{$\Delta \psi$} &  $C^{1,\alpha}_{2n-2}$   &  $C^{1,\alpha}_{2n-1}$             \\ \hline
\multicolumn{1}{|l|}{$\Gamma$}      & $C^{1,\alpha}_{n-1}$    &  $C^{1,\alpha}_n$             \\ \hline
\multicolumn{1}{|l|}{$\mathrm{Rm}$} & $C^{0,\alpha}_{n}$    &    $C^{0,\alpha}_{n+1}$            \\ \hline
\multicolumn{1}{|l|}{$\mathrm{Rc}$} & $C^{0,\alpha}_{n}$     &    $C^{0,\alpha}_{n+1}$            \\ \hline
\multicolumn{1}{|l|}{$R$}           & $C^{0,\alpha}_{n+1}$    &    $C^{0,\alpha}_{n+1}$           \\ \hline
\end{tabular}
\end{table}
\end{prop}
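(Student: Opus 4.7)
The plan is to first define $\overline g := \psi^{-4/(n-2)} g$, which makes the claimed decomposition tautological. Since $g_{ij} - \psi^{4/(n-2)}\delta_{ij} \in C^{2,\alpha}_{n-1}$ by the definition of asymptotically Schwarzschild, and a Taylor expansion shows $\psi^{-4/(n-2)}$ differs from $1$ by a function in $C^{2,\alpha}_{n-2}$, the weighted Hölder algebra yields
\[\overline g_{ij} - \delta_{ij} = \psi^{-4/(n-2)}\bigl(g_{ij} - \psi^{4/(n-2)}\delta_{ij}\bigr) \in C^{2,\alpha}_{n-1},\]
and also $g_{ij} - \delta_{ij} \in C^{2,\alpha}_{n-2}$, since the $(\psi^{4/(n-2)}-1)\delta_{ij}$ piece dominates.

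Next, I would deduce the entries for $\Gamma$, $\mathrm{Rm}$, and $\mathrm{Rc}$ directly from the standard coordinate formulas, using the differentiation rule $\partial\colon C^{k,\alpha}_\tau \to C^{k-1,\alpha}_{\tau+1}$ together with the multiplicative algebra property of the weighted spaces. Christoffel symbols of the form $g^{-1}*\partial g$ gain one order of decay over the metric perturbation, and the Riemann tensor $\partial\Gamma + \Gamma * \Gamma$ gains one more; the quadratic $\Gamma * \Gamma$ pieces land in $C^{1,\alpha}_{2(n-1)}$ and $C^{1,\alpha}_{2n}$ respectively, which are harmless, and Ricci is just a trace of Riemann.

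The interesting part is the extra cancellation for $\Delta\psi$ and $R_g$. The key observation is that the model Schwarzschild metric $\psi^{4/(n-2)}\delta$ is scalar flat, which amounts to $\Delta_\delta \psi = 0$ away from the origin (since $|x|^{-(n-2)}$ is Euclidean harmonic). Expanding
\[\Delta_{\overline g}\psi = \overline g^{ij}\partial_i\partial_j\psi - \overline g^{ij}\overline\Gamma^k_{ij}\partial_k\psi\]
and replacing $\overline g^{ij}$ by $\delta^{ij} + (\overline g^{ij}-\delta^{ij})$ kills the leading term $\Delta_\delta\psi$. Using $\partial^2\psi \in C^{0,\alpha}_n$, $\partial\psi \in C^{1,\alpha}_{n-1}$, and the decay of $\overline\Gamma$ and $\overline g^{-1}-\delta$ already established, one finds $\Delta_{\overline g}\psi \in C^{1,\alpha}_{2n-1}$; the same calculation with $g$ in place of $\overline g$ gives $\Delta_g\psi \in C^{1,\alpha}_{2n-2}$. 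For the scalar curvature, I would then apply the conformal change formula
\[R_g = \psi^{-\frac{n+2}{n-2}}\!\left(-\frac{4(n-1)}{n-2}\Delta_{\overline g}\psi + R_{\overline g}\psi\right),\]
and use $R_{\overline g}\in C^{0,\alpha}_{n+1}$ (from the $\overline g$-row already proved) together with the strictly faster decay of $\Delta_{\overline g}\psi$ to conclude $R_g \in C^{0,\alpha}_{n+1}$.

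The main obstacle is not a single step but the bookkeeping of these cancellations: the naïve decay for $R_g$ would be that of $\mathrm{Rc}$, namely $C^{0,\alpha}_n$, and the extra order is recovered only by exploiting the exact vanishing of the Schwarzschild scalar curvature. This additional order of decay is what ultimately makes the ADM integral \eqref{ADM} converge and is what drives the graphical asymptotic analysis of $\Sigma_\infty$ in the remainder of Section \ref{sec:asymptotics}.
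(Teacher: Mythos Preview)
Your proposal is correct and follows essentially the same route as the paper: define $\overline g$ tautologically, read off $\Gamma$, $\Rm$, $\Ric$ from the schematic formulas, and recover the extra order for $R_g$ via the conformal Laplacian formula together with the improved decay of $\Delta_{\overline g}\psi$. The only difference worth noting is that the paper computes $\Delta_g|x|^{-(n-2)}$ explicitly in divergence form to isolate the leading term $m(n-2)^2|x|^{-(2n-2)}$ (needed later in Section~\ref{sec:thm1}), whereas your argument via $\Delta_\delta\psi=0$ gives the decay rate but not this coefficient; also, for the $C^{1,\alpha}$ conclusion you should record $\partial^2\psi\in C^{1,\alpha}_n$ rather than $C^{0,\alpha}_n$, which is fine since $\psi$ is smooth.
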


\begin{rk}
$R_g\in C^{0,\alpha}_{n+1}$ implies $R_g\in L^1(\mathcal E).$ This improved decay is not true for general asymptotically flat metrics of order $n-2$. 
\end{rk}

\begin{proof}
By assumption, $g_{ij}-\psi^\frac{4}{n-2}\delta_{ij}\in C^{2,\alpha}_{n-1},$ so
\[\overline g_{ij}-\delta_{ij}=\psi^{-\frac{4}{n-2}}g_{ij}-\delta_{ij} \in \psi^{-\frac{4}{n-2}}\cdot C^{2,\alpha}_{n-1}\subset C^{2,\alpha}_{n-1}.\]

To compute $\Delta \psi$, we compute $\Delta|x|^{-(n-2)}$ explicitly to leading order. The sign of this leading term will be particularly useful in Section \ref{sec:thm1}. Using elementary matrix expansions, we have 
\begin{align*}
   g_{ij}&-\delta_{ij}-\frac{2m}{n-2}|x|^{-(n-2)}\delta_{ij}\in C^{2,\alpha}_{n-1},\\
   g^{ij} &- \delta^{ij}+\frac{2m}{n-2}|x|^{-(n-2)}\delta^{ij}\in C^{2,\alpha}_{n-1}, \\
   \det({g_{ij}}) &- 1-\frac{2nm}{n-2}|x|^{-(n-2)}\in C^{2,\alpha}_{n-1}, \\
   \sqrt{\det(g_{ij})} &- 1-\frac{nm}{n-2}|x|^{-(n-2)} \in C^{2,\alpha}_{n-1}.
\end{align*}
Multiplying the second and fourth lines gives
\[\sqrt{\det (g_{ij})}g^{ij}-\delta_{ij}-m|x|^{-(n-2)}\delta_{ij}\in C^{2,\alpha}_{n-1}.\]
By the useful formula $\partial_j |x|^{-p}= -p |x|^{-(p+2)}x^j$, we compute
\begin{align}\label{step1}
\Delta_g |x|^{-(n-2)}    &=\frac{1}{\sqrt{\det g}}\sum_{ij}\partial_i \left(\sqrt{\det g}g^{ij}\partial_j |x|^{-(n-2)}\right)\\
&=\frac{-(n-2)}{\sqrt{\det g}}\sum_{ij}\partial_i \left(|x|^{-n}x^i+m|x|^{-(2n-2)}x^i+C^{2,\alpha}_{2n-2}\right) \nonumber\\
&= m(n-2)^2 |x|^{-(2n-2)}+C^{1,\alpha}_{2n-1}.\nonumber 
\end{align}
The same calculation for $\overline g$ gives $\Delta_{\overline g}|x|^{-(n-2)}\in C^{1,\alpha}_{2n-1}$. 

The decay rates of the Christoffel symbols can be computed from the schematic equation $\Gamma = g^{-1}*\partial g$ and for the Riemann and Ricci curvatures, from $ \mathrm{Rm}\sim \mathrm{Rc} = \partial \Gamma +\Gamma *\Gamma $. This also yields a decay rate $R_g\in C^{0,\alpha}_{n}$ and $R_{\overline g}\in C^{0,\alpha}_{n+1}$. We now improve the decay rate for $R_g$ using the conformal structure. Using the standard formula for scalar curvature under a conformal change (where $L$ is the conformal Laplacian),
\begin{align*}
    R_g&=\psi^{-\frac{n+2}{n-2}}L_{\overline g}\psi\\
    &= C^{2,\alpha}_0\cdot \left(-4\frac{n-1}{n-2} \Delta_{\overline g}\psi+R_{\overline g}\psi\right)\\
    &=C^{2,\alpha}_0\cdot (C^{1,\alpha}_{2n-1}+C^{0,\alpha}_{n+1}\cdot C^{0,\alpha}_0)\in C_{n+1}^{0,\alpha}.\qedhere
\end{align*}
\end{proof}

We have an initial curvature estimate for $\Sigma_\infty$. This will ultimately be improved in Lemma \ref{improved} below. 

\begin{lem}\label{curvatureestimate}
The asymptotically minimal hypersurface $\Sigma_\infty$ enjoys the curvature decay
\[|A|= O(|x'|^{-1})\] with respect to the metric $g$.
\end{lem}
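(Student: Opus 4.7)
The strategy is a standard blow-up argument against the Bernstein-type rigidity theorem for stable minimal hypersurfaces in low-dimensional Euclidean space. First, I would verify that $\Sigma_\infty\cap \mathcal E'$ is a stable minimal hypersurface. Minimality is immediate because $h\equiv 0$ on $\mathcal E'$ by the construction in Proposition \ref{prop:h}. Stability follows by passing to the limit in the second-variation inequality for the minimizers $\Sigma_\sigma$: each $\Sigma_\sigma$ minimizes the functional $\mathcal F_\sigma$ so is stable, and compactly supported test functions on $\Sigma_\infty\cap\mathcal E'$ can be transferred to $\Sigma_\sigma$ for large $\sigma$ using the local smooth convergence from Theorem \ref{maindecay}. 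Because $h=0$ in this region, the $a_{X,Z}$-based formula from Proposition \ref{prop:2ndvar} reduces to the classical minimal-hypersurface stability inequality.

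Next I would argue by contradiction. Suppose there is a sequence $p_k\in \Sigma_\infty\cap\mathcal E'$ with $r_k:=|x'(p_k)|\to\infty$ and $r_k|A|(p_k)\to\infty$. A standard point-picking lemma (applied to the function $q\mapsto \dist_g(q,\partial \mathcal E')\cdot |A|(q)$ on $\Sigma_\infty\cap\mathcal E'$) lets me assume, after modifying $p_k$, that $|A|(p_k)=:\lambda_k\to\infty$, that $\lambda_k r_k\to\infty$, and that $|A|\le 2\lambda_k$ on the intrinsic ball of radius $\lambda_k^{-1}\cdot \omega_k$ about $p_k$ in $\Sigma_\infty$, for some $\omega_k\to\infty$. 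Rescale the ambient metric by $\lambda_k^2$ and translate so that $p_k$ becomes the origin of $\Bbb R^n$; call the resulting ambient manifolds $(\widetilde M_k,\tilde g_k)$ and the rescaled hypersurfaces $\widetilde\Sigma_k$.

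By asymptotic flatness (Definition \ref{maindef}) and $r_k\to\infty$, the metrics $\tilde g_k$ converge in $C^{2,\alpha}_{\loc}(\Bbb R^n)$ to the Euclidean metric $\delta$. The surfaces $\widetilde\Sigma_k$ are still stable minimal, they satisfy $|\widetilde A|(0)=1$ and $|\widetilde A|\le 2$ on expanding intrinsic balls about $0$, and they inherit local area bounds from the fact that each $\Sigma_\sigma$ is almost-minimizing with uniformly bounded mass density in balls (a consequence of the structure of the $\mu$-bubble functional together with Lemma \ref{surjective}, which prevents vanishing limits). Hence Allard's regularity theorem together with the curvature bound gives subsequential $C^{2,\alpha}_{\loc}$ convergence to a complete, properly embedded, stable minimal hypersurface $\Sigma_\infty^\sharp\subset \Bbb R^n$ with $|A_{\Sigma_\infty^\sharp}|(0)=1$.

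Finally, since $3\le n\le 7$, the Schoen--Simon--Yau/Schoen--Simon Bernstein theorem (the same fact that is used in the regularity theory invoked already in Section 2) implies that every complete two-sided stable minimal hypersurface in $\Bbb R^n$ is a flat hyperplane. This contradicts $|A_{\Sigma_\infty^\sharp}|(0)=1$, completing the argument and yielding $|A|(p)\lesssim |x'(p)|^{-1}$ on $\Sigma_\infty\cap \mathcal E'$.

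The main technical point I expect to be delicate is the upgrade from Caccioppoli/varifold convergence of the rescaled minimizers to genuine $C^{2,\alpha}_{\loc}$ convergence, which is needed both to make sense of the pointwise curvature bound in the limit and to ensure that the limiting hypersurface is nontrivial through the origin. Obtaining uniform local mass bounds for $\widetilde\Sigma_k$ (so that Allard applies) is where one has to use that $\Sigma_\infty$ inherits good mass estimates from the $\mu$-bubble minimizers $\Sigma_\sigma$, uniformly in $\sigma$; the projection argument of Lemma \ref{surjective} is the key input there.
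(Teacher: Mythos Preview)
Your proposal is correct and is essentially the paper's main approach: after rescaling by $|x'|$ the ambient metric is uniformly close to Euclidean, and one invokes the Schoen--Simon curvature estimates for stable minimal hypersurfaces. The paper simply cites \cite{SSstable, SSelliptic} as a black box (together with the remark that harmonic coordinates are needed to verify their hypotheses), while you unwind this into an explicit blow-up against the stable Bernstein theorem in $\Bbb R^n$---which is precisely the mechanism behind those estimates. One caution: for $n=7$ the stable Bernstein theorem requires Euclidean area growth (Schoen--Simon), so the local mass bounds you flag as ``delicate'' are genuinely essential, not optional.

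The paper also offers a second, genuinely different proof in Appendix~\ref{app:A}: rather than using stability, it exploits that $\Sigma_\infty$ is area \emph{minimizing} in the asymptotic region and proves a universal curvature estimate for minimizers in homogeneously regular manifolds via an Allard-type theorem plus a universal density bound, both obtained by compactness/contradiction. That route avoids stability altogether, trading it for the minimizing property.
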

\begin{proof}
After rescaling, such a curvature estimate follows from work of Schoen--Simon, see either \cite{SSstable} or \cite{SSelliptic}. To apply \cite{SSstable}, we note that the injectivity radius is controlled from below by \cite{CGT} and the closeness of the metric to flat. In general, it is not possible to obtain anything better than $C^{0,\alpha}$ control on the metric components in normal coordinates given a $C^{2,\alpha}$ bound \cite{DTK}. However, we can introduce $C^{2,\alpha}$ harmonic coordinates about every point of interest and these will satisfy the assumptions (1.2)-(1.6) in \cite{SSstable}. 

We give another proof of this result in Appendix \ref{app:A} via a ``universal" curvature estimate for area minimizing hypersurfaces in manifolds with bounded sectional curvature.
\end{proof}

\begin{lem}[Comparison lemma {\cite[Lemma 2.4]{EichmairMetzger}}]\label{comparison}
In an asymptotically Schwarzschild manifold, there exists a constant $C=C(g)$ with the following property. For any hypersurface $\Sigma$, the second fundamental forms relative to $g$ and $\delta$ satisfy
\[|A_g-A_\delta|_g\le  C|x|^{-(n-1)}(|x||A_g|_g+1).\]
For the mean curvature, we have the comparison
\[|H_g-H_\delta|\le C|x|^{-(n-1)}( |x||A_g|_g+1).\]
\end{lem}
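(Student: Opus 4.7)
The plan is to work directly in the asymptotic coordinates and express both second fundamental forms in a common frame, exploiting the weighted decay table of Proposition \ref{table}. Recall that $g_{ij}-\delta_{ij}\in C^{2,\alpha}_{n-2}$ (from $\psi^{4/(n-2)}-1\in C^{2,\alpha}_{n-2}$ and the table) and that the Christoffel symbols satisfy $\Gamma^k_{ij}\in C^{1,\alpha}_{n-1}$. The unit normals $\nu^g$ and $\nu^\delta$ are parallel along $\Sigma$, so I can write $\nu^g=\lambda\nu^\delta$ with $\lambda=|\nu^\delta|_g^{-1}=1+O(|x|^{-(n-2)})$ and with $X(\lambda)=O(|x|^{-(n-1)})$ for any tangent $X$.

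Using $A^{\bullet}(X,Y)=-\langle\nabla^{\bullet}_X\nu^{\bullet},Y\rangle_{\bullet}$ for $X,Y$ tangent to $\Sigma$, I would write
\[\nabla^g_X\nu^g=\nabla^\delta_X\nu^g + \Gamma_g*X*\nu^g = \lambda\,\nabla^\delta_X\nu^\delta + X(\lambda)\nu^\delta + O(|x|^{-(n-1)}),\]
and then expand the inner product via $g=\delta+(g-\delta)$. The leading term produces $A^\delta(X,Y)$ up to a factor $\lambda$; the correction $(g-\delta)(\nabla^\delta_X\nu^\delta,Y)$ contributes $O(|x|^{-(n-2)})|A^\delta|_g$; the $X(\lambda)\nu^\delta$ term pairs with the tangent $Y$ through $g-\delta$ (since $\delta(\nu^\delta,Y)=0$), yielding a much smaller error $O(|x|^{-(2n-3)})$; and the Christoffel term contributes $O(|x|^{-(n-1)})$. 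Collecting everything,
\[|A^g-A^\delta|_g\lesssim |x|^{-(n-2)}|A^\delta|_g + |x|^{-(n-1)}.\]

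To get the bound in terms of $|A^g|_g$, I would use the triangle inequality $|A^\delta|_g\le|A^g|_g+|A^g-A^\delta|_g$. For $|x|$ large enough that $C|x|^{-(n-2)}\le 1/2$, the $|A^g-A^\delta|_g$ piece can be absorbed into the left-hand side, giving
\[|A^g-A^\delta|_g\lesssim |x|^{-(n-2)}|A^g|_g + |x|^{-(n-1)} = |x|^{-(n-1)}(|x||A^g|_g+1),\]
which is the desired estimate (for small $|x|$ on the end the constant $C$ can simply be inflated). The mean curvature comparison follows by tracing: $H^g-H^\delta=g^{ij}(A^g_{ij}-A^\delta_{ij})+(g^{ij}-\delta^{ij})A^\delta_{ij}$, and both terms are controlled by the same quantity, again after trading $|A^\delta|_g$ for $|A^g|_g$.

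The main obstacle is the careful bookkeeping of which errors are linear in the curvature $|A|_g$ and thus decay only like $|x|^{-(n-2)}$ versus which errors are curvature-independent and decay like $|x|^{-(n-1)}$; the Christoffel contribution and the scale-factor $X(\lambda)$ must land in the latter category while the metric-discrepancy $(g-\delta)$ terms contracted with $\nabla^\delta\nu^\delta$ land in the former. Once this dichotomy is tracked, the absorption argument is routine. An alternative, essentially equivalent, route is to parametrize $\Sigma$ locally as a Euclidean graph and compute $A^g$, $A^\delta$ from the standard second-derivative formulae, reading off the two scales from $\partial g=O(|x|^{-(n-1)})$ and $g-\delta=O(|x|^{-(n-2)})$.
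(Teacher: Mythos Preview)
Your argument has a gap: the claim that $\nu^g$ and $\nu^\delta$ are parallel is false. Writing $\Sigma=\{\zeta=0\}$, the $g$-unit normal points along $g^{ij}\partial_j\zeta$ while the $\delta$-unit normal points along $\partial_i\zeta$; these vectors are not proportional unless $d\zeta$ is an eigenvector of $g^{ij}$. The correct decomposition is $\nu^g=\lambda\nu^\delta+W$ with $W$ tangential (for $\delta$) of size $O(|x|^{-(n-2)})$ and $\partial W=O(|x|^{-(n-1)})$. Your expansion of $\nabla^\delta_X\nu^g$ therefore drops $\nabla^\delta_X W$, which carries a term of type $(g^{-1}-\delta)\ast\partial^2\zeta\ast X$ of order $|x|^{-(n-2)}|A|\,|X|$. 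This is the same order as the $(g-\delta)(\nabla^\delta_X\nu^\delta,Y)$ contribution you already record, so the final inequality survives once $W$ is tracked, but the argument as written is incomplete.

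The paper avoids this bookkeeping entirely by using the level-set formula $A_g(X,Y)=\nabla^2_g\zeta(X,Y)/|d\zeta|_g$ (and the analogous one for $\delta$), so that both second fundamental forms are expressed through the \emph{same} scalar $\zeta$. The difference then splits cleanly as
\[(A_g-A_\delta)(X,Y)=\Bigl(\tfrac{1}{|d\zeta|_g}-\tfrac{1}{|d\zeta|_\delta}\Bigr)\partial_{ij}\zeta\,X^iY^j-\Gamma^k_{ij}\,\tfrac{\partial_k\zeta}{|d\zeta|_g}\,X^iY^j,\]
with the first piece controlled by Lemma~\ref{norms} (yielding $|x|^{-(n-2)}|A_g|$) and the second by $|\Gamma|=O(|x|^{-(n-1)})$. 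No absorption step is needed to pass from $|A_\delta|$ to $|A_g|$: one simply rewrites $\partial_{ij}\zeta/|d\zeta|_g$ as $A_g+\Gamma\ast\partial\zeta/|d\zeta|_g$. This is precisely the ``alternative route'' you mention at the end; it is in fact the cleaner one here.
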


For the proof, we require a simple lemma comparing the norms defined by the metrics $g$ and $\delta$. 

\begin{lem}\label{norms}
In an asymptotically Schwarzschild manifold, the norms defined by $g$ and $\delta$ satisfy
\[||X|_g-|X|_\delta|\lesssim |x|^{-(n-2)}|X|_\delta\lesssim |x|^{-(n-2)}|X|_g,\]
where $X$ is any tangent vector. 
\end{lem}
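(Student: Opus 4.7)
The plan is to reduce everything to a pointwise bound on $g_{ij}-\delta_{ij}$ coming from the asymptotically Schwarzschild condition and then work algebraically with $|X|_g^2-|X|_\delta^2$.

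First I would unpack the asymptotics. From Definition \ref{maindef} and the binomial expansion of $(1+m/(2|x|^{n-2}))^{4/(n-2)}$, one reads off
\[g_{ij}-\delta_{ij}=\tfrac{2m}{n-2}|x|^{-(n-2)}\delta_{ij}+C^{2,\alpha}_{n-1}\subset C^{2,\alpha}_{n-2},\]
so in particular $|g_{ij}(x)-\delta_{ij}|\lesssim |x|^{-(n-2)}$ pointwise on $\mathcal E$. Everything else in the proof is purely linear algebra pointwise at $x$.

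Next I would compute
\[|X|_g^2-|X|_\delta^2=(g_{ij}-\delta_{ij})X^iX^j,\]
and bound the right-hand side by $C|x|^{-(n-2)}|X|_\delta^2$ using the elementary inequality $|A_{ij}X^iX^j|\le (\max_{ij}|A_{ij}|)\cdot n\,|X|_\delta^2$. This gives
\[\bigl||X|_g^2-|X|_\delta^2\bigr|\lesssim |x|^{-(n-2)}|X|_\delta^2.\]
In particular, for $|x|$ large enough that $C|x|^{-(n-2)}\le \tfrac14$ we obtain $\tfrac34|X|_\delta^2\le |X|_g^2\le \tfrac54|X|_\delta^2$, so the two norms are comparable: $|X|_\delta\sim |X|_g$. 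On the complementary compact region of $\mathcal E$, comparability of the two norms holds trivially by continuity and compactness, with a possibly larger constant depending only on $g$.

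Finally I would factor
\[|X|_g-|X|_\delta=\frac{|X|_g^2-|X|_\delta^2}{|X|_g+|X|_\delta}.\]
Using the comparability $|X|_g+|X|_\delta\gtrsim |X|_\delta$ just established, the bound on the numerator gives
\[\bigl||X|_g-|X|_\delta\bigr|\lesssim |x|^{-(n-2)}|X|_\delta,\]
which is the first inequality. The second inequality $|x|^{-(n-2)}|X|_\delta\lesssim |x|^{-(n-2)}|X|_g$ is just the comparability $|X|_\delta\lesssim |X|_g$ proved above, multiplied through by $|x|^{-(n-2)}$. There is no real obstacle here; the only mild subtlety is separating the ``far'' region where the Schwarzschild expansion supplies smallness of $g-\delta$ from a bounded region where the constant has to absorb purely compact-set comparisons, which is handled automatically by the $\lesssim$ convention used throughout.
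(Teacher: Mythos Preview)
Your proof is correct and follows essentially the same route as the paper: both start from the $|x|^{-(n-2)}$ decay of $g_{ij}-\delta_{ij}$, bound the difference of squares $|X|_g^2-|X|_\delta^2$, then factor and use the resulting comparability $|X|_g\approx |X|_\delta$ to pass to the difference of norms. The paper phrases the factoring step as a bound on the ratio $|X|_g/|X|_\delta$ rather than on $|X|_g-|X|_\delta$ directly, but this is only a cosmetic difference.
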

\begin{proof}
From the decay of $g_{\ij}-\delta_{ij}$, 
\[\left|\frac{|X|^2_g}{|X|^2_\delta}-1\right|\lesssim |x|^{-(n-2)},\] Factoring the left-hand side and dividing through gives
\[\left|\frac{|X|_g}{|X|_\delta}-1\right|\lesssim |x|^{-(n-2)}\left(\frac{|X|_g}{|X|_\delta}+1\right)^{-1},\]
Clearly
\[\frac{|X|_g}{|X|_\delta}\approx 1,\]
so that 
\[\left|\frac{|X|_g}{|X|_\delta}-1\right|\lesssim|x|^{-(n-2)},\]
which implies the claim. 
\end{proof}

To prove Lemma \ref{comparison}, we use the formulas for a hypersurface $\Sigma$ defined as a level set $\{\zeta=0\}$ for some smooth function $\zeta$ satisfying $d\zeta\ne 0$. It is standard \cite{Petersen} that the normal with respect to $g$ is given by 
\begin{equation}\label{normal}
    \nu_g=\frac{\nabla_g\zeta}{|d\zeta|_g},
\end{equation}
the second fundamental form is given by 
\begin{equation}\label{sff}
A_g(X,Y)=\frac{\nabla^2_g\zeta(X,Y)}{|d\zeta|_g},\quad X,Y\in T\Sigma,\end{equation}
and the mean curvature is given by 
\begin{equation}\label{meancurvature}
H_g=\Pi^{ij}_g\frac{(\nabla^2_g\zeta)_{ij}}{|d\zeta|_g},\end{equation}
where 
\begin{equation}\label{projection}\Pi_g= g^{-1}-\nu_g\otimes \nu_g\end{equation}
is the projection matrix $T^*M\to T\Sigma$. We emphasize the dependence on the metric in these formulas because we deal with three different metrics in this section.

\begin{proof}[Proof of Lemma \ref{comparison}]
Consider $\Sigma$ as a level set $\{\zeta=0\}$. For $X$ and $Y$ tangent to to $\Sigma$, we have
\[(A_g-A_\delta)(X,Y)= \left(\frac{1}{|d\zeta|_g}-\frac{1}{|d\zeta|_\delta}\right)\partial_{ij}\zeta X^iY^j- \Gamma^{k}_{ij}\frac{\partial_k \zeta}{|d\zeta|_g}X^i Y^j.\]
Using Lemma \ref{norms},
\[\left|\frac{1}{|d\zeta|_g}-\frac{1}{|d\zeta|_\delta}\right|= \frac{||d\zeta|_\delta -|d\zeta|_g|}{|d\zeta|_g|d\zeta|_\delta}\lesssim \frac{|x|^{-(n-2)}}{|d\zeta|_g}.\]
Now we estimate
\begin{align*}
    |(A_g-A_\delta)(X,Y)| &\le C |x|^{-(n-2)}\left|\frac{\partial_{ij}\zeta X^i Y^j}{|d\zeta|_g}\right|+ C|\Gamma||X|_g||Y|_g\\
   & \lesssim |x|^{-(n-2)}\left|A_g(X,Y)+\Gamma^k_{ij}\frac{\partial_k\zeta}{|d\zeta|_g}X^iY^j\right|+|x|^{-(n-1)}|X|_g|Y|_g\\
   &\lesssim |x|^{-(n-2)}|A_g(X,Y)| + (|x|^{-(n-1)}+|x|^{-n(n-1)})|X|_g|Y|_g\\
   &\lesssim |x|^{-(n-1)}(|x||A_g(X,Y)|+|X|_g|Y|_g),
\end{align*}
which completes the proof. 
\end{proof}

\begin{cor}
The mean curvature of $\Sigma_\infty$ is $O(|x'|^{-(n-1)})$ relative to the Euclidean metric. 
\end{cor}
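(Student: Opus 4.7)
The plan is to combine the Comparison Lemma (Lemma \ref{comparison}), the curvature estimate (Lemma \ref{curvatureestimate}), and the key observation that the mean curvature of $\Sigma_\infty$ with respect to $g$ vanishes on the asymptotic region $\mathcal{E}'$. Recall that $\Sigma_\infty$ is a smooth hypersurface with $H_g = h$, and by construction of $h$ in Proposition \ref{prop:h} together with the choice of $R_0$ at the start of Section \ref{sec:asymptotics}, we have $h \equiv 0$ on $\mathcal{E} \cap \{|x'| \ge R_0\}$. In particular, $\Sigma_\infty \cap \mathcal{E}'$ is (strictly) minimal with respect to $g$, so $H_g = 0$ there.

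Given this, Lemma \ref{comparison} immediately yields
\[
|H_\delta| = |H_g - H_\delta| \le C|x|^{-(n-1)}\bigl(|x||A_g|_g + 1\bigr)
\]
pointwise on $\Sigma_\infty \cap \mathcal{E}'$. By Lemma \ref{curvatureestimate}, $|A_g|_g = O(|x'|^{-1})$ on $\Sigma_\infty$. The remaining task is to replace the bound in terms of $|x|$ by one in terms of $|x'|$, which is where the asymptotic planarity enters.

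To close this up, I would invoke the fact that $\Sigma_\infty$ is trapped between the strictly mean convex coordinate hyperplanes $\{x^n = \pm a_0\}$ used as barriers in the construction of the $\Sigma_{\sigma,a}$'s, so $|x^n| \le a_0$ on all of $\Sigma_\infty$. Consequently $|x|^2 = |x'|^2 + (x^n)^2 \le |x'|^2 + a_0^2$, giving $|x| \sim |x'|$ as $|x'| \to \infty$, with the ratio between the two tending to $1$. Substituting into the previous display produces $|x||A_g|_g = O(1)$ and $|x|^{-(n-1)} = O(|x'|^{-(n-1)})$, hence $|H_\delta| = O(|x'|^{-(n-1)})$ as claimed.

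I expect no serious obstacle here: the content of the corollary is purely a translation between the $g$-data of $\Sigma_\infty$ (where the mean curvature is exactly $0$ in the asymptotic region and $|A|$ decays like $|x'|^{-1}$) and the $\delta$-data, using the comparison estimate already established. The only mild subtlety is confirming the height bound $|x^n| \le a_0$ on $\Sigma_\infty$, which is inherited from the $\sigma$-level surfaces via $C^\infty_{\mathrm{loc}}$ convergence and the barrier property, together with noting that $\mathcal{E}'$ was defined precisely so that $h$ vanishes there and Lemma \ref{comparison} applies verbatim.
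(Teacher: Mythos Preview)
Your proof is correct and follows the same approach as the paper: use $H_g=0$ on $\mathcal E'$ together with the comparison estimate (Lemma \ref{comparison}) and the curvature bound $|x||A_g|_g=O(1)$ from Lemma \ref{curvatureestimate}. You spell out the height-bound step $|x|\lesssim |x'|$ explicitly, which the paper leaves implicit here (it is invoked a few lines later in the proof of Proposition \ref{AllardCorollary}).
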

\begin{proof}
$H_g=0$ for $\Sigma_\infty$ on $\mathcal E$ and $|x||A_g|_g$ is bounded.  
\end{proof}

This decay on $H_\delta$ enables the use of Allard's regularity theorem.

\begin{prop}\label{AllardCorollary}
Let $\Sigma_\infty$ be the asymptotically minimal hypersurface considered in Proposition \ref{maindecay}. Choosing $R_0$ perhaps larger, we can write $\Sigma_\infty$ as the graph of a function $u\in C^{1,\gamma}_0(H')$, where $\gamma=\frac{1}{n}$. %We call the bound $\|u\|_{C^{1,\gamma}_0}\le C$ the bootstrap assumption 1 (BA1). 
\end{prop}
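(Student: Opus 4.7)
The plan is to apply Allard's regularity theorem at Euclidean balls of radius comparable to $|x'|$ around points of $\Sigma_\infty$ far out on the end, and then patch the resulting local graphs together. Enlarge $R_0$ if necessary so that $h\equiv 0$ on $\mathcal E\cap\{|x|\ge R_0\}$; on this region $\Sigma_\infty$ is smooth and minimal relative to $g$, and by the preceding Corollary has Euclidean mean curvature $|H_\delta|\lesssim |x|^{-(n-1)}$.

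First I would verify the hypotheses of Allard's theorem at a point $x_0\in\Sigma_\infty$ with $r_0:=|x_0'|$ large, on the Euclidean ball $B_{r_0/4}(x_0)$. Taking mean-curvature integrability $L^n$ (which yields exactly the Allard Hölder exponent $1-(n-1)/n=1/n$), the natural scale-invariant norm of $H_\delta$ is bounded by $r_0^{-(n-2)}$, hence arbitrarily small. For the density hypothesis I would strengthen Lemma \ref{surjective} to the integral-current identity $p_\#\current{\Sigma_\infty}=\current{H'}$ in $\mathcal E'$ via the same constancy-theorem argument; combined with the fact that orthogonal projections do not increase area, this yields the lower bound $\mathcal H^{n-1}_\delta(\Sigma_\infty\cap B_r(x_0))\ge (1-o(1))\omega_{n-1}r^{n-1}$. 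The matching upper bound of this order follows from the quasi-minimality of $\Sigma_\infty$: against a roughly planar competitor the area excess is controlled by the bulk term $\int h\,d\mathcal H^n$, negligible compared to $r^{n-1}$ since $|h|\lesssim 1$ in the region where $\Sigma_\infty$ lives.

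Allard's theorem then produces, at each such $x_0$, a hyperplane $\Pi_{x_0}$ through $x_0$ and a $C^{1,1/n}$ function $v_{x_0}$ over $\Pi_{x_0}$ whose graph coincides with $\Sigma_\infty\cap B_{r_0/8}(x_0)$, together with a scale-invariant $C^{1,1/n}$ estimate. To conclude that $\Pi_{x_0}$ is close to $H'$ (and not some tilted plane), I would use the surjectivity of $p$: any persistent tilt would either leave a fiber of $p$ uncovered or produce multiplicity greater than one, contradicting the current identity $p_\#\current{\Sigma_\infty}=\current{H'}$. Patching these local graphs over a standard cover of $H'\cap\{|x'|\ge R_0\}$ and reading the weighted $C^{1,1/n}_0(H')$ norm off the natural $r_0$-rescaling yields the desired function $u$.

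The main difficulty I expect is bridging the purely topological content of Lemma \ref{surjective} with the quantitative density estimate demanded by Allard's theorem. The bridge uses the curvature bound $|A|_g\lesssim |x'|^{-1}$ from Lemma \ref{curvatureestimate}, which after rescaling to unit scale keeps $\Sigma_\infty$ uniformly smooth and prevents folding at the Allard scale; this guarantees that the multiplicity deduced from the current identity is the same geometric multiplicity that controls Euclidean density.
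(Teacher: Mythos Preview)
Your overall strategy---apply Allard at scale $\sim|x'|$ with $p=n$ to get H\"older exponent $1/n$---matches the paper's, and your $L^n$ mean-curvature estimate is correct. But there are two related gaps, and the paper fills both with a single ingredient you never invoke: the height bound $|x^n|\le a_0$ on $\Sigma_\infty$, inherited from the slab in which each approximator $\Sigma_\sigma$ is trapped by the mean-convex barriers of Lemma~\ref{sandwich}.

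First, your upper density bound is not sharp enough for Allard. Comparing an area minimizer against a ``roughly planar competitor'' (plus whatever lateral piece is needed to match boundaries) yields only $\mathcal H^{n-1}(\Sigma_\infty\cap B_r)\lesssim r^{n-1}$ with a dimensional constant, not the $(1+o(1))\omega_{n-1}r^{n-1}$ that Allard requires; think of a smooth point on a minimizing cone, where the density at large scales is strictly bigger than $1$. Second, your tilt argument is wrong: any nonvertical hyperplane projects onto $H'$ surjectively with multiplicity one, so the current identity $p_\#\current{\Sigma_\infty}=\current{H'}$ cannot detect tilt. The curvature bound $|A|\lesssim|x'|^{-1}$ does not rescue this either, since integrating it along a radial ray bounds the rotation of the tangent plane only by $\log|x'|$, which diverges.

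The paper's route is direct. Since $\Sigma_\infty\subset\{|x^n|\le a_0\}$, one has $|x|\lesssim|x'|$ on $\Sigma_\infty$, and after rescaling a ball of radius $\rho\sim|x'|$ to unit size the surface sits in a slab of height $O(a_0/|x'|)\to 0$ over the fixed plane $T=\{x^n=0\}$. Simon's Lemma~22.2 converts this small height (together with a crude mass bound, which your comparison argument \emph{does} give) into small excess relative to $T$; that is exactly the hypothesis of Allard's theorem in the form of Simon's Theorem~23.1, and it simultaneously pins the graphing plane to be $H'$. No separate tilt argument is needed.
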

\begin{proof}
We apply Allard's regularity theorem in the form \cite[Theorem 23.1]{Simon} with $p=n$ and $T=\{x^n=0\}\cong \Bbb R^{n-1}$. The $L^n$ norm of the mean curvature is estimated by 
\[
   \left( \int_{B_\rho(x)\cap \Sigma_\infty} |H_\delta|^n\,d\mathcal H^{n-1}\right)^\frac{1}{n} \rho^\frac{1}{n}\lesssim \rho^{-(n-2)}\le \ve_n,\]
   where $\ve_n$ is the constant appearing in Allard's theorem. Here we use $|H_\delta|\lesssim |x'|^{-(n-1)}$, which is crucial. The excess is estimated using \cite[Lemma 22.2]{Simon} and the height bound, which implies $|x|\lesssim |x'|$ on $\Sigma_\infty$. For details, see \cite[4.2.2]{Kuwert}.
  \end{proof}

Now the strategy is to write $H_g=0$ as a prescribed curvature equation in Euclidean space and to bootstrap this decay rate.  We define $\zeta(x',x^n)=u(x')-x^n$, so that $\Sigma_\infty=\{\zeta=0\}$. It is convenient to use Greek indices running from $1$ to $n$ and Latin from $1$ to $n-1$. We see immediately that $\partial_\mu\zeta=(\partial_iu,-1)$ and $\partial_{\mu\nu}\zeta$ is only nonzero when $\mu\nu=ij$, in which case it equals $\partial_{ij}u$. Note that with this definition, the normal coming from \eqref{normal} points down, but this is of no consequence since $H_g=0$ is independent of this choice. 

\begin{prop}\label{bigeqn}
 The function $u$ satisfies
 \begin{equation}\label{maineqn}\Pi_{\delta}^{ij}\partial_{ij}u= B^{ij}\partial_{ij}u +\Pi_{\overline g}*\Gamma_{\overline g}*\partial\zeta -2\frac{n-1}{n-2} \overline g^{\mu\nu}\partial_\mu\zeta\partial_\nu\log \psi,\end{equation}
 where, schematically,
\[B=p+\left(\frac{1}{|d\zeta|^2_{\overline g}}-\frac{1}{|d\zeta|_\delta^2}\right)\partial u*\partial u +\frac{p*\partial\zeta*\partial\zeta}{|d\zeta|_{\overline g}^2} +\frac{p*p*\partial\zeta*\partial\zeta}{|d\zeta|^2_{\overline g}},\]
and $p^{\mu\nu}=\overline g^{\mu\nu} -\delta^{\mu\nu}$.
\end{prop}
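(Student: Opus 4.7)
The plan is to begin with the equation $H_g=0$ on $\Sigma_\infty$, pass to the conformally related metric $\overline g$ via the decomposition of Proposition \ref{table}, and then isolate the Euclidean minimal-surface operator $\Pi_\delta^{ij}\partial_{ij}u$ from the resulting level-set expression, absorbing the remainder into the tensor $B$, the Christoffel contraction, and the conformal derivative term.

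First I would invoke the standard conformal transformation formula for mean curvature: under $g=\psi^{4/(n-2)}\overline g$,
\[H_g = \psi^{-2/(n-2)}\left(H_{\overline g} + \frac{2(n-1)}{n-2}\,\nu_{\overline g}(\log\psi)\right).\]
Setting $H_g=0$, clearing the factors $\psi^{-2/(n-2)}$ and then $|d\zeta|_{\overline g}$, and using $(\nu_{\overline g})^\mu=\overline g^{\mu\sigma}\partial_\sigma\zeta/|d\zeta|_{\overline g}$ produces the final conformal term in \eqref{maineqn}. Then I would apply the level-set formula \eqref{meancurvature} to $\overline g$:
\[H_{\overline g}\,|d\zeta|_{\overline g} = \Pi_{\overline g}^{\mu\nu}\partial_{\mu\nu}\zeta - \Pi_{\overline g}^{\mu\nu}\Gamma^{\sigma}_{\mu\nu}(\overline g)\,\partial_\sigma\zeta.\]
Since $\zeta=u-x^n$ has $\partial_{\mu\nu}\zeta=0$ unless $\mu=i$, $\nu=j$ with $i,j<n$ (in which case it equals $\partial_{ij}u$), the first summand reduces to $\Pi_{\overline g}^{ij}\partial_{ij}u$ and the second is precisely $\Pi_{\overline g}*\Gamma_{\overline g}*\partial\zeta$.

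The final step is to split $\Pi_{\overline g}^{ij} = \Pi_\delta^{ij} + B^{ij}$. Writing $\overline g^{ij}=\delta^{ij}+p^{ij}$, the piece $\overline g^{ij}-\delta^{ij}$ contributes the $p$ term in $B$, while the difference of normal-projection pieces is
\[\frac{\overline g^{ik}\overline g^{jl}\partial_k\zeta\,\partial_l\zeta}{|d\zeta|_{\overline g}^2}-\frac{\partial^i\zeta\,\partial^j\zeta}{|d\zeta|_\delta^2}.\]
Expanding $(\delta+p)(\delta+p)=\delta\delta+\delta p+p\delta+pp$ in the numerator and grouping yields the remaining three schematic summands: the $\delta\delta$ part, combined with the mismatched denominators, gives $(|d\zeta|_{\overline g}^{-2}-|d\zeta|_\delta^{-2})\,\partial u*\partial u$ (using $\partial^i\zeta=\partial_i u$ for $i<n$); the mixed $\delta p$ and $p\delta$ contributions give $p*\partial\zeta*\partial\zeta/|d\zeta|_{\overline g}^2$; and the $pp$ contribution gives $p*p*\partial\zeta*\partial\zeta/|d\zeta|_{\overline g}^2$. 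Signs and linear coefficients are absorbed into the $*$-notation, and rearranging so that $\Pi_\delta^{ij}\partial_{ij}u$ sits alone on the left yields \eqref{maineqn}.

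The main obstacle is really just algebraic bookkeeping: one must carefully track which factors of $\partial\zeta$ carry a spatial $\partial_i u$ (for $i<n$) versus the constant $\partial_n\zeta=-1$, and verify that every remainder term fits inside the four schematic classes defining $B$. Once this is done, the decay rates needed in the ensuing bootstrap can be read off directly from $p\in C^{2,\alpha}_{n-1}$ and $\Gamma_{\overline g}\in C^{1,\alpha}_n$ (Proposition \ref{table}), together with the preliminary $C^{1,\gamma}_0$ bound on $u$ from Proposition \ref{AllardCorollary}.
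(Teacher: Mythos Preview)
Your proposal is correct and follows essentially the same route as the paper: use the conformal change formula to convert $H_g=0$ into $H_{\overline g}=-\tfrac{2(n-1)}{n-2}\,\overline\nu(\log\psi)$, insert the level-set expression \eqref{meancurvature} for $H_{\overline g}$, and then expand $\Pi_{\overline g}^{ij}-\Pi_\delta^{ij}$ by writing $\overline g^{\mu\nu}=\delta^{\mu\nu}+p^{\mu\nu}$ and regrouping the $\delta\delta$, $\delta p$, and $pp$ contributions into the four schematic pieces of $B$. The only cosmetic difference is a sign in your identification $B^{ij}=\Pi_{\overline g}^{ij}-\Pi_\delta^{ij}$ (the paper effectively takes the opposite sign), but since $B$ is defined only schematically via the $*$-notation this is immaterial, as you yourself note.
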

\begin{proof}
Using the conformal transformation properties of mean curvature \cite[page 33]{Lee}, we get 
\[H_{\overline g}= -2\frac{n-1}{n-2} \,\overline \nu(\log \psi),\]
since $H_g=0.$ Using equations \eqref{normal} and \eqref{meancurvature}, we get 
\[\Pi_{\overline g}^{\mu\nu}(\partial_{\mu\nu}\zeta -\Gamma_{\overline g\,\mu\nu}^\kappa\partial_\kappa\zeta)=-2\frac{n-1}{n-2}\grd_{\overline g}\zeta(\log \psi)\]
To obtain our conclusion, we simply have to compute $\Pi_{\overline g}^{\mu\nu}\partial_{\mu\nu}\zeta-\Pi^{\mu\nu}_\delta \partial_{\mu\nu}\zeta$:

\begin{align*}
    \Pi^{ij}_{\overline g} \partial_{ij}u-\Pi_\delta^{ij}\partial_{ij} u&=\left(\delta^{ij}+p^{ij}-\frac{(\delta^{i\alpha}+p^{i\alpha})(\delta^{j\beta}+p^{j\beta})\partial_\alpha\zeta\partial_\beta\zeta}{|d\zeta|_{\overline g}^2}\right)\partial_{ij}u\\
    &\quad -\left(\delta^{ij}-\frac{\delta^{i\alpha}\delta^{j\beta}\partial_\alpha\zeta\partial_\beta\zeta}{|d\zeta|_\delta^2}\right)\partial_{ij}u\\
    &=p^{ij}\partial_{ij}u-\left(\frac{1}{|d\zeta|^2_{\overline g}}-\frac{1}{|d\zeta|_\delta^2}\right)\delta^{i\alpha}\delta^{j\beta}\partial_\alpha\zeta\partial_\beta\zeta \partial_{ij}u+\frac{p*\partial\zeta*\partial\zeta}{|d\zeta|_{\overline g}^2}*\partial^2u\\
    &\quad+\frac{p*p*\partial\zeta*\partial\zeta}{|d\zeta|^2_{\overline g}}*\partial^2 u\\
    &=\left(p+\left(\frac{1}{|d\zeta|^2_{\overline g}}-\frac{1}{|d\zeta|_\delta^2}\right)\partial u*\partial u +\frac{p*\partial\zeta*\partial\zeta}{|d\zeta|_{\overline g}^2} +\frac{p*p*\partial\zeta*\partial\zeta}{|d\zeta|^2_{\overline g}}\right)*\partial^2 u.
\end{align*}
\end{proof}

Applying these computational tricks to the function $\zeta(x)=x^n$ gives
\begin{lem}\label{sandwich}
In an asymptotically Schwarzschild manifold with negative mass, there exists an $a_0>0$ such that for any $a\ge a_0$, the ``hyperplane" $\{x^n=a\}$ is strictly mean convex with respect to the upward normal and $\{x^n=-a\}$ is strictly mean convex with respect to the downward normal.
\end{lem}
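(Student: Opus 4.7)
The plan is to apply the mean curvature formula \eqref{meancurvature} to the affine defining function $\zeta(x) = x^n - a$ of the hyperplane $\{x^n = a\}$ and extract the leading behavior of $H_g$ using the conformal decomposition $g = \psi^{4/(n-2)}\overline g$ from Proposition \ref{table}. Since $\zeta$ is affine, $\partial^2 \zeta \equiv 0$ and the Hessian reduces to $(\nabla^2_g \zeta)_{\mu\nu} = -\Gamma^n_{g,\mu\nu}$, so only a single Christoffel sum must be controlled.

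Rather than expanding $\Gamma^n_g$ directly, I would invoke the conformal transformation law for mean curvature (the same identity used to derive Proposition \ref{bigeqn}) to write
\[H_g = \psi^{-\frac{2}{n-2}}\left(H_{\overline g} + \frac{2(n-1)}{n-2}\,\overline\nu(\log\psi)\right).\]
Because $\partial^2\zeta = 0$, $H_{\overline g}$ involves only Christoffel symbols of $\overline g$, which lie in $C^{1,\alpha}_n$ by Proposition \ref{table}; hence $H_{\overline g} = O(|x|^{-n})$ is strictly subleading. Since $\overline g - \delta \in C^{2,\alpha}_{n-1}$, the unit normal $\overline\nu$ differs from $e_n$ by faster-decaying terms, so $\overline\nu(\log\psi) = \partial_n \log \psi$ at leading order, and a direct differentiation of $\psi = 1 + \frac{m}{2|x|^{n-2}}$ gives
\[\overline\nu(\log\psi) = -\frac{m(n-2)}{2}\frac{x^n}{|x|^n} + O(|x|^{-(2n-3)}).\]
Combining these produces the main identity
\[H_g = -m(n-1)\frac{x^n}{|x|^n} + O(|x|^{-n}).\]
On $\{x^n = a\}$ with $m<0$ and $a > 0$ large, the leading term $-m(n-1)\,a/|x|^n$ is strictly positive; to pass to uniform mean convexity I would compare it against the error in two regimes on the hyperplane---where $|x'| \lesssim a$ (leading $\sim a^{-(n-1)}$ vs.\ error $\sim a^{-n}$) and where $|x'| \to \infty$ (both of order $|x'|^{-n}$, but the leading term carries an extra factor $a$). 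Choosing $a_0$ larger than an absolute constant depending only on the asymptotic data then forces $H_g > 0$ throughout the hyperplane, relative to the upward normal $\nu_g = \nabla_g\zeta/|d\zeta|_g$. The case of $\{x^n = -a\}$ with the downward normal follows by taking $\zeta = -x^n - a$ instead, which reverses both the normal and the sign of $(\nabla^2_g\zeta)_{\mu\nu}$; the symmetric computation again gives $H_g > 0$.

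The main obstacle is not conceptual but careful bookkeeping: one must simultaneously track the conformal factor $\psi$, the remainder $\overline g - \delta$, and the sign conventions for the normal, and then verify that the subleading $O(|x|^{-n})$ corrections do not overwhelm the $a$-enhanced leading term in the far asymptotic region where $|x| \sim |x'|$ rather than $|x| \sim a$. This uniformity in $x'$ is precisely why a single choice of $a_0$ suffices.
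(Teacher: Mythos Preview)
Your proposal is correct and follows exactly the route the paper intends: the sentence ``applying these computational tricks to the function $\zeta(x)=x^n$'' refers precisely to the conformal mean curvature identity from the proof of Proposition~\ref{bigeqn} together with the improved decay $\Gamma_{\overline g}\in C^{1,\alpha}_n$, and your computation reproduces the paper's formula $H_g=-(n-1)m\,x^n/|x|^n+O(|x|^{-n})$. Your explicit two--regime uniformity check (noting that the leading term carries an extra factor of $a$ relative to the $O(|x|^{-n})$ error, so a single threshold $a_0\gtrsim (-m)^{-1}$ works for all $x'$) is a helpful addition that the paper leaves implicit.
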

\begin{proof}
Using the above equations, and computing relative to the upward normal, the mean curvature of a constant $x^n$ hyperplane is given by
\[H_g=-(n-1)m\frac{x^n}{|x|^n}+O(|x|^{-n}).\qedhere\]
\end{proof}

We now redefine $\alpha$ to be the minimum of the original $\alpha$ and $\frac{1}{n}$. So by Proposition \ref{AllardCorollary}, $u\in C^{1,\alpha}_0$. The following is very useful (it will be used implicitly in almost every single line later) and is easy to prove. 

\begin{lem}[\emph{Hilfsaussage} {\cite[page 82]{Kuwert}}]\label{hilfsaussage}
Assume $u\in C^{1,\alpha}_0(H')$. Given a function $f\in C^2_\tau(\mathcal E)$, its restriction to the graph of $u$, $\tilde f(x')=f(x',u(x'))$, lies in $C^{1,\alpha}_\tau(H')$. The same holds for $f\in C^{1,\alpha}_\tau(\mathcal E)$.\end{lem}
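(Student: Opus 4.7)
The plan is to apply the chain rule to $\tilde f = f \circ \Phi$, where $\Phi(x') = (x', u(x'))$ is the graph embedding, and then combine the resulting expression with the product rule for Hölder seminorms. The key preliminary observation is that because $u \in C^{1,\alpha}_0$ gives $\sup_{H'}|u| < \infty$, we have $|x'| \le |\Phi(x')| \le C|x'|$ along the graph, so the ambient weight $|x|^{-\sigma}$ on $\mathcal E$ and the intrinsic weight $|x'|^{-\sigma}$ on $H'$ are equivalent along $\Phi$. Moreover $|\partial u(x')| \lesssim |x'|^{-1}$, so $\Phi$ is globally Lipschitz with bounded derivative, and $[\partial_i u]_{C^\alpha(B_1(x'))} \lesssim |x'|^{-(1+\alpha)}$.

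With this in hand, the sup bound $|\tilde f(x')| \lesssim |x'|^{-\tau}$ is immediate, and from the chain rule
\[
\partial_i \tilde f(x') = (\partial_i f)(\Phi(x')) + (\partial_n f)(\Phi(x'))\,\partial_i u(x')
\]
we get $|\partial_i \tilde f(x')| \lesssim |x'|^{-(\tau+1)}$, since each restricted partial $\partial_\bullet f$ decays like $|x'|^{-(\tau+1)}$ along $\Phi$ and $\partial_i u$ is bounded.

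The main step is the weighted Hölder seminorm of $\partial \tilde f$. Fix $x'_0 \in H'$ with $|x'_0|=R$ large, and let $x', y' \in B_1(x'_0)$. For $f \in C^{1,\alpha}_\tau$, the Lipschitz character of $\Phi$ gives $|\Phi(x')-\Phi(y')| \lesssim |x'-y'|$, hence
\[
|(\partial_i f)(\Phi(x'))-(\partial_i f)(\Phi(y'))| \le [\partial f]_{C^\alpha(B_{C}(\Phi(x'_0)))}\,|\Phi(x')-\Phi(y')|^\alpha \lesssim R^{-(\tau+1+\alpha)}\,|x'-y'|^\alpha,
\]
and for $f \in C^2_\tau$ one uses instead the stronger Lipschitz bound $|\nabla \partial f| \lesssim R^{-(\tau+2)}$ and absorbs a power with $|x'-y'|\le 1 \Rightarrow |x'-y'|\le |x'-y'|^\alpha$. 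Combining these estimates with the Hölder product rule applied to the second term $((\partial_n f)\circ \Phi)\cdot \partial_i u$ and the bounds $\|(\partial f)\circ \Phi\|_\infty \lesssim R^{-(\tau+1)}$, $\|\partial u\|_\infty \lesssim R^{-1}$, $[\partial u]_{C^\alpha(B_1(x'_0))} \lesssim R^{-(1+\alpha)}$ yields $[\partial_i \tilde f]_{C^\alpha(B_1(x'_0))} \lesssim R^{-(\tau+1+\alpha)}$, which is the required decay for $\tilde f$ to lie in $C^{1,\alpha}_\tau(H')$.

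The only technical subtlety is in the $C^{1,\alpha}_\tau$ case, where no Lipschitz slack on $\partial f$ is available; but the uniform Lipschitz character of $\Phi$ (coming from the decay of $\partial u$) provides exactly the control needed to transfer the weighted Hölder seminorm from $\mathcal E$ to $H'$. All implicit constants depend only on $\|u\|_{C^{1,\alpha}_0}$ and $\|f\|_{C^{1,\alpha}_\tau}$ (resp.\ $\|f\|_{C^2_\tau}$).
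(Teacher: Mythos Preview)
Your argument is correct and is exactly the standard verification one would expect: compare weights via $|x'|\approx|\Phi(x')|$, apply the chain rule, and control the weighted H\"older seminorm of $\partial\tilde f$ using the Lipschitz character of $\Phi$ together with the product rule. The paper does not supply a proof of this lemma at all---it simply declares it ``easy to prove'' and cites Kuwert---so there is nothing to compare against; your sketch fills in precisely the elementary details the authors chose to omit. The one point worth making explicit is that $\Phi(B_1(x'_0))$ may sit in a ball of radius slightly larger than $1$ in $\mathcal E$, but the weighted H\"older seminorm over $B_C(\cdot)$ is controlled by the seminorm over unit balls (by chaining or the obvious covering), so this is harmless.
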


\begin{lem}\label{decays}
The assumption $u\in C^{1,\alpha}_0$ implies the following decay rates:
\begin{enumerate}
    \item $\partial\zeta\in C^{0,\alpha}_0$
    
    \item $\frac{1}{|d\zeta|^2_{\overline g}}\in C^{0,\alpha}_0$
    
    \item $\frac{1}{|d\zeta|^2_{\overline g}}-\frac{1}{|d\zeta|_\delta^2}\in C^{0,\alpha}_{n-1}$
    
    \item $\delta^{ij}-\Pi_\delta^{ij}\in C^{0,\alpha}_{2}$
    
    \item $\Pi_{\overline g}\in C^{0,\alpha}_0$
    
    \item $\Gamma_{\overline g}(x',u(x'))\in C^{1,\alpha}_n$
\end{enumerate}
\end{lem}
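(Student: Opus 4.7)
The plan is to verify each of the six decay rates by unpacking the definitions of the weighted H\"older spaces, the table in Proposition \ref{table}, and the composition rule Lemma \ref{hilfsaussage}. Since $\zeta(x',x^n) = u(x') - x^n$, we have $\partial_i \zeta = \partial_i u \in C^{0,\alpha}_1$ for $i < n$ and $\partial_n \zeta = -1$, so $\partial\zeta \in C^{0,\alpha}_0$, giving (1). For (2) and (5), the starting observation is that $|\partial_n\zeta| = 1$ and $\overline g^{\mu\nu} - \delta^{\mu\nu} \in C^{2,\alpha}_{n-1}$ is small for large $|x|$, so $|d\zeta|^2_{\overline g}$ is uniformly bounded above and away from zero in the asymptotic region; consequently $1/|d\zeta|^2_{\overline g}$ is a bounded H\"older function in $C^{0,\alpha}_0$, and $\Pi_{\overline g}^{\mu\nu} = \overline g^{\mu\nu} - \nu^\mu_{\overline g}\nu^\nu_{\overline g}$ is the sum of two bounded H\"older quantities and so also lies in $C^{0,\alpha}_0$.

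For (3), the key identity is
\[\frac{1}{|d\zeta|^2_{\overline g}} - \frac{1}{|d\zeta|^2_\delta} = \frac{(\delta^{\mu\nu} - \overline g^{\mu\nu})\,\partial_\mu\zeta\,\partial_\nu\zeta}{|d\zeta|^2_{\overline g}\,|d\zeta|^2_\delta},\]
where $\overline g^{\mu\nu} - \delta^{\mu\nu} \in C^{2,\alpha}_{n-1}(\mathcal E)$ by Proposition \ref{table}. By Lemma \ref{hilfsaussage}, its restriction to the graph lies in $C^{1,\alpha}_{n-1}(H')$, and multiplying by the bounded factor $\partial\zeta*\partial\zeta$ and dividing by the bounded denominators from (2) yields $C^{0,\alpha}_{n-1}$ decay. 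For (4), I would expand $\Pi^{ij}_\delta = \delta^{ij} - \partial_i u\,\partial_j u / |d\zeta|^2_\delta$ for Latin indices (where $\delta^{ik}\partial_k\zeta = \partial_i u$); since $\partial u \in C^{0,\alpha}_1$, the product $\partial_i u\,\partial_j u$ lies in $C^{0,\alpha}_2$, and dividing by the bounded $|d\zeta|^2_\delta$ preserves this weight.

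For (6), by Proposition \ref{table} we have $\Gamma_{\overline g} \in C^{1,\alpha}_n(\mathcal E)$, so the second half of Lemma \ref{hilfsaussage} gives $\Gamma_{\overline g}(x',u(x')) \in C^{1,\alpha}_n(H')$ componentwise. The only systematic subtlety I would flag is that comparing weights on $\mathcal E$ with weights on $H'$ through the Hilfsaussage tacitly uses $|x| \sim |x'|$ on the graph, which holds because $u$ is bounded (as $u \in C^{1,\alpha}_0$) so that $|x|^2 = |x'|^2 + u(x')^2 \sim |x'|^2$ for large $|x'|$. No single step is a genuine obstacle; the lemma is a careful but largely routine bookkeeping exercise in weighted H\"older calculus, with item (3) being the most substantive because it exploits the algebraic cancellation that harvests the full $|x|^{-(n-1)}$ gain from the asymptotic flatness of $\overline g$.
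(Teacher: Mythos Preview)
Your proposal is correct and follows essentially the same approach as the paper: the same algebraic identity for (3), the same explicit formula $\delta^{ij}-\Pi_\delta^{ij}=\partial_iu\,\partial_ju/(1+|\partial u|^2)$ for (4), and the same appeal to Proposition \ref{table} together with the \emph{Hilfsaussage} for (6). Your remark that $|x|\sim|x'|$ on the graph is a useful clarification the paper leaves implicit.
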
 
\begin{proof}
(1) follows from Proposition \ref{AllardCorollary}. (2) follows (1) and the decay of the metric. (3) follows from the simple calculation
\[\frac{1}{|d\zeta|_{\overline g}^2}-\frac{1}{|d\zeta|^2_\delta}=\frac{|d\zeta|_\delta^2-|d\zeta|^2_{\overline g}}{|d\zeta|_\delta^2|d\zeta|^2_{\overline g}}=-\frac{p(d\zeta,d\zeta)}{|d\zeta|_\delta^2|d\zeta|^2_{\overline g}} \in C^{0,\alpha}_{n-1}.\]
(4) follows from the definition,
\[\delta^{ij}-\Pi_\delta^{ij} = \frac{\partial_iu\partial_j u}{1+|\partial u|^2},\]
and $\partial u\in C^{0,\alpha}_1$. Points (5) and (6) follow using these same ideas and the \emph{Hilfsaussage}. 
\end{proof}

To obtain the desired asymptotic expansion for $u$, we use the following theorem of Meyers (for a proof, see \cite[Appendix A]{Lee}). 

\begin{thm}[Meyers \cite{Meyers}]\label{Meyers} Let $(M^{n-1},g)$ be an asymptotically flat manifold. Suppose $Lu=f$, where $u$ is a bounded smooth function and $Lu=a^{ij}\partial^2_{ij} u + b^i\partial_i u+cu$ is a linear elliptic operator satisfying the standard conditions $a^{ij}-\delta^{ij}\in C^{k,\alpha}_{\ve}$, $b^i\in C^{k,\alpha}_{1+\ve}$, $c\in C^{k,\alpha}_{n-1+\ve}$, and $f\in C^{k,\alpha}_{n-1+\ve}$ for some $\ve\in (0,1)$. Then there exist constants $h_\infty,c_\infty$ such that 
\begin{align*}
    u(x)&-a_\infty \in C^{k+2,\alpha}_{\ve}\quad\text{$n=3$},\\
    u(x)&-\left(a_\infty+\frac{c_\infty}{|x|^{n-3}}\right)\in C^{k+2,\alpha}_{n-3+\ve} \quad\text{$n\ge 4$}.
\end{align*}
\end{thm}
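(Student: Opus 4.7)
The plan is to reduce to a perturbation of the flat Laplacian on a Euclidean end, bootstrap the decay of $u$ using weighted Schauder theory, and then extract the leading asymptotic term by comparison with the Newtonian potential on $\Bbb R^{n-1}$.

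First, since the conclusion concerns only behavior near infinity, I pass to the asymptotic coordinates on the end $\mathcal E\cong\Bbb R^{n-1}\setminus B$ and rewrite the equation as
$$\Delta u = F, \quad F := f - (a^{ij}-\delta^{ij})\partial_{ij}u - b^i\partial_i u - cu.$$
A priori, $u$ is bounded, and rescaled interior Schauder estimates applied on annular shells $\{|x|\sim R\}$ yield $|\partial^j u(x)|\lesssim|x|^{-j}$ for $j\le k+2$. Substituting these crude bounds and invoking the decay hypotheses on $a^{ij}-\delta^{ij}$, $b^i$, $c$ and $f$ separately, one concludes $F\in C^{k,\alpha}_{n-1+\ve}$.

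The heart of the proof is the asymptotic analysis of the Newton potential
$$w(x)=-\int_{\Bbb R^{n-1}}G(x,y)F(y)\,dy$$
(after extending $F$ by zero past the interior boundary of $\mathcal E$), where $G$ is the Green's function of the flat Laplacian. Splitting the integration into the regions $|y|\le|x|/2$, $|y|\sim|x|$, and $|y|\ge 2|x|$ and using $F\in C^{k,\alpha}_{n-1+\ve}$, a direct computation shows that for $n\ge 4$
$$w(x)=\frac{c_\infty}{|x|^{n-3}}+r(x),\quad r\in C^{k+2,\alpha}_{n-3+\ve},$$
with $c_\infty$ proportional to $\int F$. When $n=3$, the Green's function is logarithmic; boundedness of $u$ forces the flux condition $\int F = 0$ via integration by parts on $\{R_0\le |x|\le R\}$ as $R\to\infty$ (no bounded harmonic function on an exterior 2D domain can cancel a nontrivial log term), so $w\in C^{k+2,\alpha}_{\ve}$.

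Finally, $u-w$ is bounded and harmonic outside a large ball, so by a Liouville-type argument on annuli (combining the maximum principle with the decay of $\nabla u$ obtained above), it differs from a constant $a_\infty$ by a function in $C^{k+2,\alpha}_{n-3+\ve}$ (resp.\ $C^{k+2,\alpha}_\ve$ when $n=3$). Adding back the expansion of $w$ yields the claim. The main obstacle is executing the Newton-potential integration to sharp order: distinguishing the genuine leading term $c_\infty/|x|^{n-3}$ from the remainder requires careful use of the cancellations in $G(x,y)-G(x,0)$, and in the borderline dimension $n=3$ the flux identity must be extracted without losing control on the growth of $u$ during the bootstrap.
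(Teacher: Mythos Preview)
The paper does not prove this statement; it is quoted as a classical result of Meyers, with a pointer to \cite[Appendix A]{Lee} for a proof. There is thus no in-paper argument to compare against, and I simply assess your sketch on its own terms.

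Your overall route---rewrite $Lu=f$ as $\Delta u=F$, expand the Newton potential of $F$, and dispose of the harmonic remainder by Liouville---is the standard one and is sound in outline. There is, however, a genuine gap in the line ``Substituting these crude bounds\ldots one concludes $F\in C^{k,\alpha}_{n-1+\ve}$.'' From $|\partial^j u|\lesssim|x|^{-j}$ alone, the dominant terms $(a^{ij}-\delta^{ij})\partial_{ij}u$ and $b^i\partial_i u$ decay only like $|x|^{-(2+\ve)}$, so at this stage you get merely $F\in C^{k,\alpha}_{2+\ve}$. For $n=3$ that is exactly what you need, but for $n\ge 4$ it is strictly weaker than $C^{k,\alpha}_{n-1+\ve}$. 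The remedy is the iteration you promise in your opening sentence but then omit: from $F\in C^{k,\alpha}_{2+\ve}$ your Newton-potential and Liouville steps already produce $u-a_\infty\in C^{k+2,\alpha}_{\ve}$; feeding this back improves $F$ to $C^{k,\alpha}_{2+2\ve}$, and after finitely many passes one reaches $F\in C^{k,\alpha}_{n-1+\ve}$, whereupon the final application extracts the $c_\infty|x|^{-(n-3)}$ term.

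A smaller point on the $n=3$ case: your flux argument does not actually force $\int F=0$, because the inner boundary contribution $\int_{|x|=R_0}\partial_r u$ need not vanish. This does not matter for the conclusion, though: any $\log|x|$ contribution in $w$ is exactly cancelled by the $\log|x|$ mode of the harmonic function $u-w$ (which, being of at most logarithmic growth on an exterior planar domain, has the expansion $a+b\log|x|+O(|x|^{-1})$), and boundedness of $u$ kills the net coefficient. So $u-a_\infty\in C^{k+2,\alpha}_\ve$ follows without the flux claim.
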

We emphasize that the theorem is stated here for $(n-1)$-dimensional manifolds. We can finally prove the main result of this section.

\begin{proof}[Proof of Proposition \ref{maindecay}]
We rewrite \eqref{maineqn} in the form
\begin{equation}
    (\Pi_\delta^{ij}-B^{ij})\partial_{ij}u+(\Pi_{\overline g}*\Gamma_{\overline g}+\partial\log \psi)*\partial u= \Pi_{\overline g}*\Gamma_{\overline g} +\overline g*\partial_n \log\psi, 
\end{equation}
neglecting numerical constants multiplying the various terms. Comparing with Meyers' theorem, the only thing that needs to be checked is that $\partial_n\log\psi\in C^{1,\alpha}_{n-1+\ve}$. Naively, we only get a decay rate of $n-1$, but the $x^n$ derivative enjoys extra decay when restricted to the graph:
\[\partial_n\psi =c\frac{u}{|x|^n}\in C^{1,\alpha}_{n}.\]
Applying Myers' theorem with $k=0$ gives a constant $a_\infty$ so that $v:=u-a_\infty\in C^{2,\alpha}_{n-3}$. To improve the decay further we wish to use the theorem with $k=1$, but this requires improving the decay of the coefficients first. It is easy to see that the (newfound) decay of $\partial^2 u$ now gives $C^{1,\alpha}$ decay for every quantity in Lemma \ref{decays} which involves $u$. So we may apply Meyers' theorem with $k=1$ and conclude our result.
\end{proof}

\begin{cor}\label{SigmaAF}
When $n\ge 4$, the induced metric $\gamma$ on $\Sigma_\infty$ is asymptotically flat with zero mass. More precisely, the projection $\Sigma_\infty\to H'$ induces coordinates $y^i$ on $\Sigma_\infty$ so that
\[\gamma_{ij}=\delta_{ij}+C^{2,\alpha}_{n-2}(H').\]
\end{cor}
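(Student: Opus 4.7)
By Proposition \ref{maindecay}, for $n \ge 4$ we may parametrize $\Sigma_\infty \cap \mathcal E'$ as a Euclidean graph $\{(x', u(x'))\}$ over $H'$ with $u(x') - a_\infty - c_\infty|x'|^{-(n-3)} \in C^{3,\alpha}_{n-3+\ve}(H')$; the projection $\Sigma_\infty \cap \mathcal E' \to H'$ is then a diffeomorphism yielding coordinates $y^i = x'^i$. In these coordinates, the pullback of $g$ along $x' \mapsto (x', u(x'))$ is
\[
\gamma_{ij} = g_{ij} + g_{in}\,\partial_j u + g_{jn}\,\partial_i u + g_{nn}\,\partial_i u\,\partial_j u,
\]
with all $g_{\mu\nu}$ evaluated on the graph. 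The idea is to estimate each of the four terms by combining the Schwarzschild expansion of $g$ with the decay of $u$ and $\partial u$, and to observe that the mass follows automatically from the resulting decay order.

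First I would expand the conformal factor as in Proposition \ref{table} to obtain $g_{\mu\nu} - \delta_{\mu\nu} - \tfrac{2m}{n-2}|x|^{-(n-2)}\delta_{\mu\nu} \in C^{2,\alpha}_{n-1}(\mathcal E)$. Using $|x|^2 = |x'|^2 + u(x')^2$ together with the boundedness of $u$, a Taylor expansion in $u^2/|x'|^2$ gives $|x|^{-(n-2)}\big|_{\Sigma_\infty} - |x'|^{-(n-2)} \in C^{2,\alpha}_n(H')$, and a $C^{2,\alpha}$ analogue of Lemma \ref{hilfsaussage} (valid by the chain rule since $u \in C^{3,\alpha}$) transfers all decay rates of $g_{\mu\nu}$ from $\mathcal E$ to $H'$. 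Differentiating the expansion of $u$ yields $\partial_i u \in C^{2,\alpha}_{n-2}(H')$. Substituting: the diagonal term $g_{ij}|_{\Sigma_\infty} - \delta_{ij}$ equals $\tfrac{2m}{n-2}|x'|^{-(n-2)}\delta_{ij}$ modulo $C^{2,\alpha}_{n-1}$, which lies in $C^{2,\alpha}_{n-2}$; the cross terms $g_{in}\,\partial_j u$ with $i < n$ lie in $C^{2,\alpha}_{n-1} \cdot C^{2,\alpha}_{n-2} \subset C^{2,\alpha}_{2n-3}$, since $\delta_{in} = 0$ eliminates the conformal contribution to $g_{in}$; and the quadratic term $g_{nn}\,\partial_i u\,\partial_j u$ lies in $C^{2,\alpha}_{2(n-2)}$. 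For $n \ge 4$ all three rates are at least $n-2$, so $\gamma_{ij} - \delta_{ij} \in C^{2,\alpha}_{n-2}(H')$.

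For the mass claim, apply the ADM formula \eqref{ADM} in dimension $n-1$. Since $\partial \gamma \in C^{1,\alpha}_{n-1}(H')$, the integrand $(\partial_i \gamma_{ij} - \partial_j \gamma_{ii})\,x'^j/|x'|$ on $\{|x'| = \rho\}$ is $O(\rho^{-(n-1)})$, while the surface area is $O(\rho^{n-2})$, so the boundary integral is $O(\rho^{-1}) \to 0$ as $\rho \to \infty$. Equivalently, the induced decay rate $n-2$ strictly exceeds the $(n-1)$-dimensional Schwarzschild rate $n-3$, forcing the mass to vanish. The main (minor) obstacle is careful bookkeeping of the asymptotic orders in the pullback and justification of the $C^{2,\alpha}$ Hilfsaussage, both routine given the $C^{3,\alpha}$ regularity of $u$ supplied by Proposition \ref{maindecay}.
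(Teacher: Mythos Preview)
Your proposal is correct and follows essentially the same approach as the paper: both compute $\gamma_{ij}$ in graph coordinates via $\partial_{y^i}=\partial_{x^i}+\partial_i u\,\partial_{x^n}$, estimate each term using the $C^{2,\alpha}_{n-2}$ decay of $\partial u$ from Theorem~\ref{maindecay} and the asymptotics of $g$, and conclude the mass vanishes because the ADM integrand decays like $|y|^{-(n-1)}$ against spheres of area $O(|y|^{n-2})$. The only cosmetic difference is that the paper first factors $g=\psi^{4/(n-2)}\overline g$ and uses $\overline g-\delta\in C^{2,\alpha}_{n-1}$, whereas you expand $g$ directly and handle the off-diagonal $g_{in}$ terms separately; these are equivalent bookkeepings of the same computation, and your remark about needing a $C^{2,\alpha}$ \emph{Hilfsaussage} (justified by $u\in C^{3,\alpha}$) is equally implicit in the paper's proof.
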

\begin{proof}
We use the standard graphical coordinates $y^i$ associated to $x'\mapsto (x',u(x'))$. The coordinate vector fields are given by 
\[\frac{\partial}{\partial y^i}=\frac{\partial}{\partial x^i}+\frac{\partial u}{\partial x^i}\frac{\partial}{\partial x^n},\]
so we compute 
\begin{align*}
  \gamma(\partial_{y^i},\partial_{y^j}) &= g(\partial_{y^i},\partial_{y^j})\\
    &=\psi^\frac{4}{n-2}(x',u(x'))\left(\delta_{ij}+\partial_iu\partial_ju+ (\overline g-\delta)(\partial_{y^i},\partial_{y^j})\right)\\
    &=(1+C^{2,\alpha}_{n-2})^\frac{4}{n-2}\left(\delta_{ij}+C^{2,\alpha}_{n-2}\cdot C^{2,\alpha}_{n-2}+C^{2,\alpha}_{n-1}\right)\\
    &= \delta_{ij}+C^{2,\alpha}_{n-2}. 
\end{align*}
Now the ADM mass integrand \ref{ADM} associated to $\gamma$ decays like $O(|y|^{-(n-1)})$, whereas the coordinate spheres have area growing like $O(|y|^{n-2})$ (recall $\dim \Sigma_\infty=n-1$), so the mass is zero. 
\end{proof}

We remark for later that Theorem \ref{maindecay} gives improved decay of the second fundamental form.

\begin{lem}\label{improved}
The asymptotically minimal hypersurface $\Sigma_\infty$ enjoys the curvature decay 
\[|A|=O(|x'|^{-(n-1)})\]
with respect to the metric $g$.
\end{lem}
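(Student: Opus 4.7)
The plan is to upgrade the crude estimate $|A|=O(|x'|^{-1})$ from Lemma \ref{curvatureestimate} by feeding the sharp graphical expansion of Theorem \ref{maindecay} into the Comparison Lemma \ref{comparison}. Because $\Sigma_\infty$ is now known to be a Euclidean graph $x^n=u(x')$ with an explicit leading-order asymptotic, one can compute $A_\delta$ directly from two derivatives of $u$, and then transfer the resulting decay to $A_g$ using the comparison estimates already in hand.

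First I would extract the decay of $\partial^2 u$ from Theorem \ref{maindecay}. For $n=3$ the expansion $u-a_\infty\in C^{3,\alpha}_\ve$ immediately gives $\partial^2 u\in C^{1,\alpha}_{2+\ve}$, and $2+\ve\ge n-1=2$. For $n\ge 4$ one has
\[
u-\Bigl(a_\infty+\tfrac{c_\infty}{|x'|^{n-3}}\Bigr)\in C^{3,\alpha}_{n-3+\ve},
\]
so $\partial^2$ of the remainder lies in $C^{1,\alpha}_{n-1+\ve}$, while an explicit computation using $\partial_j|x'|^{-(n-3)}=-(n-3)|x'|^{-(n-1)}x'^j$ shows
\[
\partial^2\!\left(\tfrac{c_\infty}{|x'|^{n-3}}\right)=O(|x'|^{-(n-1)}).
\]
Thus $\partial^2 u=O(|x'|^{-(n-1)})$ in every dimension.

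Next I would use the graph formulas \eqref{normal}--\eqref{meancurvature} applied to $\zeta=u(x')-x^n$ in the Euclidean metric: with $|d\zeta|_\delta=(1+|\nabla u|^2)^{1/2}$ bounded above and below, one obtains
\[
|A_\delta|_\delta\lesssim |\partial^2 u|=O(|x'|^{-(n-1)}).
\]
By Lemma \ref{norms}, $|A_\delta|_g$ and $|A_\delta|_\delta$ differ by a factor $1+O(|x|^{-(n-2)})$, so $|A_\delta|_g=O(|x'|^{-(n-1)})$ as well. Now invoke Lemma \ref{comparison} together with the preliminary bound $|A_g|_g=O(|x|^{-1})$ from Lemma \ref{curvatureestimate}:
\[
|A_g-A_\delta|_g\le C|x|^{-(n-1)}\bigl(|x||A_g|_g+1\bigr)=O(|x|^{-(n-1)}).
\]
The triangle inequality then yields $|A_g|_g\le |A_\delta|_g+|A_g-A_\delta|_g=O(|x|^{-(n-1)})$, and since the height bound $|u|\lesssim 1$ implies $|x|\asymp |x'|$ along $\Sigma_\infty\cap\mathcal E'$, this is $O(|x'|^{-(n-1)})$.

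The only genuinely delicate point is that the $n\ge 4$ leading term $c_\infty|x'|^{-(n-3)}$ in the expansion of $u$ sits exactly at the borderline decay: two derivatives land precisely at the threshold $|x'|^{-(n-1)}$ that we claim for $|A|$. Everything else is an extra factor of $|x'|^{-\ve}$ or better, so nothing in the remainder of the graph computation, the comparison of norms, or the application of Lemma \ref{comparison} can weaken the bound.
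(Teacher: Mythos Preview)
Your proof is correct and follows exactly the same approach as the paper: reduce to $|A_\delta|=O(|x'|^{-(n-1)})$ via Lemma \ref{comparison} and the preliminary bound of Lemma \ref{curvatureestimate}, then read off $|A_\delta|\approx |\partial^2 u|=O(|x'|^{-(n-1)})$ from the graphical asymptotics of Theorem \ref{maindecay}. The paper's proof is simply the two-line version of what you wrote out in full; your explicit treatment of the borderline term $c_\infty|x'|^{-(n-3)}$ and the norm comparison via Lemma \ref{norms} are precisely the details the paper suppresses.
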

\begin{proof}
By Lemma \ref{comparison} and Lemma \ref{curvatureestimate}, it suffices to show that $|A_\delta|=O(|x'|^{-(n-1)})$. Since $\partial u$ is bounded, $|A_\delta|\approx |\partial^2 u|=O(|x'|^{-(n-1)})$.
\end{proof}

\subsection{Strong stability of $\Sigma_\infty$}   By construction, the $\mu$-bubbles $\Sigma_{\sigma,a}$ satisfy the stability inequality with respect to variations which vanish on the boundary. But as a result of our height picking, Lemma \ref{lem:heights}, we gain an additional \emph{vertical stability} property. This leads to an improved stability inequality on $\Sigma_\infty$ which holds not only for compactly supported functions (and limits thereof), but functions of the form $\alpha+\varphi$, where $\varphi$ can be approximated by compactly supported functions and $\alpha$ is a possibly nonzero constant. This is called the \emph{strong stability inequality} \cite{SchoenPMT} and to state it precisely we need one more definition.

\begin{defn}[Weighted Sobolev Spaces] \label{weightedsob}
Let $\mathcal E$ be a Euclidean end of a manifold $(M^n,g)$ admitting a structure of infinity per definition \ref{maindef}. For $p\ge 1$ an $\tau\in \Bbb R$, we define the \emph{weighted Lebesgue space} $L^p_\tau$ as the set of all functions $u\in L^p_\loc$ for whic the norm
\[\|u\|_{L^p_\tau}=\left(\int_M |u|^pr^{\tau p-n}\, dx\right)^{1/p}\]
is finite. 
For $k\in\Bbb N$, we define the \emph{weighted Sobolev space}  $W^{k,p}_\tau$ as the set of all functions $u\in W^{k,p}_\loc$ for which the norm 
\[\|u\|_{W^{k,p}_\tau}=\sum_{i=0}^k\|\nabla^i u\|_{L^p_{\tau+i}}\]
is finite. The space of compactly supported smooth functions, $C^\infty_c$, is dense in $W^{k,p}_\tau$.
\end{defn}

\begin{thm}[Strong stability]\label{strongstab}
Let $\varphi$ be a Lipschitz function on $\Sigma_\infty$ and suppose there exists a constant $\alpha\in\Bbb R$ such that 
\begin{equation}\label{condition} \varphi-\alpha \in \begin{cases} 
W^{1,2}_{\frac{n-3}{2}}(\Sigma_\infty) & \text{if $n\ge 4$} \\
C^1_c(\Sigma_\infty) & \text{if $n=3$}
\end{cases}.
\end{equation}
Then 
\begin{equation}\label{str}
    \int_{\Sigma_\infty}|\nabla\varphi|^2 +\frac 12 R_{\Sigma_\infty}\varphi^2 -\frac 12(R_M+h^2+2\nu(h))\varphi^2\,d\mathcal H^{n-1}\ge 0.
\end{equation}
\end{thm}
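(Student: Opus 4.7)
The plan is to derive at each $\Sigma_\sigma$ a stability inequality which combines the usual interior (normal) stability coming from the minimizing property with an extra vertical stability coming from the height-picking lemma, and then pass to the limit $\sigma\to\infty$ using the smooth convergence $\Sigma_\sigma\to\Sigma_\infty$ and the asymptotic expansion of Theorem \ref{maindecay}.

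The finite-$\sigma$ key observation: for any $C^1$ function $\varphi$ on $\Sigma_\sigma$ with $\varphi|_{\partial\Sigma_\sigma}=0$ and any $\alpha\in\Bbb R$, the variation $X=\varphi\nu+\alpha\partial_{x^n}$ moves $\partial\Sigma_\sigma=\Gamma_{\sigma,a(\sigma)}$ along the cylinder $\{|x'|=\sigma\}$ to $\Gamma_{\sigma,a(\sigma)+t\alpha}$, because $\partial_{x^n}$ is tangent to that cylinder and $\varphi$ vanishes on the boundary. Comparing $\Sigma_\sigma+tX$ with the $\mathcal F_\sigma$-minimizer $\Sigma_{\sigma,a(\sigma)+t\alpha}$ for this new boundary, and invoking Lemma \ref{lem:heights} on the left, one gets
\[
\mathcal F_\sigma(\Sigma_\sigma)\le\mathcal F_\sigma(\Sigma_{\sigma,a(\sigma)+t\alpha})\le\mathcal F_\sigma(\Sigma_\sigma+tX),
\]
so $t=0$ is a local minimum of $t\mapsto\mathcal F_\sigma(\Sigma_\sigma+tX)$ and $\delta^2\mathcal F_\sigma(\Sigma_\sigma)[X]\ge 0$. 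Decomposing $X=\tilde\varphi\nu+\hat X$ with $\tilde\varphi=\varphi+\alpha\langle\partial_{x^n},\nu\rangle$ and $\hat X=\alpha(\partial_{x^n})^\top$, Proposition \ref{prop:2ndvar} together with the integration by parts \eqref{ibps} — and noting the cancellation of the $h\zeta$ contributions between $b_{X,Z}$ and $d_{X,Z}$ — yields an inequality of the form
\[
\int_{\Sigma_\sigma}\bigl(|\nabla\tilde\varphi|^2-(\Ric(\nu,\nu)+|A|^2)\tilde\varphi^2-\nu(h)\tilde\varphi^2\bigr)\,d\mathcal H^{n-1}+T_\sigma\ge 0,
\]
where $T_\sigma$ collects interior tangential terms (each carrying a factor of $h$ or $\nabla h$, hence supported in the fixed compact region of the core where $h\not\equiv 0$) together with boundary contributions over $\Gamma_{\sigma,a(\sigma)}$ coming from $c_{X,Z}$ in \eqref{ibps}.

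To conclude, let $\varphi$ satisfy \eqref{condition}, write $\varphi=\alpha+\psi$, and approximate $\psi$ by compactly supported $\psi_k$ (using density of $C^\infty_c$ in $W^{1,2}_{(n-3)/2}$ from Definition \ref{weightedsob} for $n\ge 4$; trivially for $n=3$). On each $\Sigma_\sigma$ test against $\tilde\varphi_\sigma=\psi_k+\alpha\langle\partial_{x^n},\nu\rangle$. By Theorem \ref{maindecay} and Lemma \ref{improved}, $\nu\to\pm\partial_{x^n}$, $|A|=O(|x'|^{-(n-1)})$, and $(\partial_{x^n})^\top=O(|x'|^{-(n-1)})$ at infinity; since $h$ vanishes in the asymptotic region, $\nu(h)$ is compactly supported. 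The interior integrals pass to their analogues on $\Sigma_\infty$; the tangential remainder $T_\sigma$ vanishes in the limit because its boundary part is bounded by (area of $\Gamma_{\sigma,a(\sigma)}$)$\cdot\,O(\sigma^{-(n-1)})=o(1)$, and on $\Sigma_\infty$ (which has no boundary) the interior tangential contributions are trivial, as $\mathcal F$ is a functional on unparametrized hypersurfaces. One obtains
\[
\int_{\Sigma_\infty}|\nabla\varphi|^2-(\Ric(\nu,\nu)+|A|^2)\varphi^2-\nu(h)\varphi^2\,d\mathcal H^{n-1}\ge 0.
\]
Finally, the Gauss equation $2\Ric_M(\nu,\nu)=R_M-R_{\Sigma_\infty}+h^2-|A|^2$ (using $H_{\Sigma_\infty}=h$) gives $\Ric(\nu,\nu)+|A|^2=\tfrac12(R_M-R_{\Sigma_\infty}+h^2+|A|^2)$; substituting and discarding the nonnegative $\tfrac12|A|^2\varphi^2$ produces exactly \eqref{str}.

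The main obstacle, I expect, is the careful bookkeeping of $T_\sigma$ as $\sigma\to\infty$ — making rigorous both the vanishing of the boundary integrals and the fact that the interior tangential contributions ``collapse'' on $\Sigma_\infty$. The second subtlety, for $n\ge 4$, is verifying integrability of the quadratic terms $(\Ric(\nu,\nu)+|A|^2)\varphi^2$ against $\varphi-\alpha\in W^{1,2}_{(n-3)/2}$: the decays $|A|^2=O(|x'|^{-2(n-1)})$ and $\Ric=O(|x'|^{-n})$ from Lemma \ref{improved} and Proposition \ref{table} are both much faster than the weight threshold, so a Hardy-type bound closes the approximation argument.
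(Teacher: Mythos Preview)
Your overall strategy matches the paper's: height-picking yields stability of $\Sigma_\sigma$ against variations of the form $(\text{normal})+\alpha\,\partial_n$, one passes to the limit, applies the traced Gauss equation, and closes by density in $W^{1,2}_{(n-3)/2}$. The execution, however, diverges from the paper in two linked respects, and the paper's choices sidestep precisely the difficulty you flag at the end.

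First, the paper reverses your order of operations. Rather than integrate by parts on $\Sigma_\sigma$ and then send $\sigma\to\infty$, it works with the un-integrated-by-parts integrand $a_{X,Z}$ of \eqref{a}, proves $a_j=O(|x'|^{-(n-1)})$ uniformly in $j$ (Lemma \ref{adecay}), passes to the limit by dominated convergence to obtain $\int_{\Sigma_\infty}(a_\infty+d_\infty)\ge 0$, and only then integrates by parts on $\Sigma_\infty$ itself over exhausting domains $D_\sigma$, where the refined asymptotics of Theorem \ref{maindecay} and Lemma \ref{improved} are available to kill the boundary term (Lemma \ref{cdecay}). Your route would require those refined graph asymptotics uniformly along the approximators $\Sigma_\sigma$ near $\partial\Sigma_\sigma$, which is not established. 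Second --- and this is the device that eliminates your tangential remainder outright --- the paper does not take $X=\psi_k\nu+\alpha\,\partial_n$. Instead (Proposition \ref{approxstab}) it constructs an ambient vector field equal to $\alpha\,\partial_n$ for $|x|\ge 10R$ but equal to $\varphi\,\nu_\infty$, extended off $\Sigma_\infty$ by normal geodesics, on the compact region $|x|\le R$ containing $\{h\ne 0\}$; this forces $\hat X=0$ and $Z=0$ on $\Sigma_\infty$ wherever $h$ is active, so the tangential pieces of $b_\infty$ and $d_\infty$ vanish pointwise. Your appeal to ``$\mathcal F$ is a functional on unparametrized hypersurfaces'' is the correct heuristic, but on a noncompact $\Sigma_\infty$ it is not free: making it rigorous would itself require the boundary-at-infinity analysis of Lemmas \ref{adecay}--\ref{cdecay}, so nothing is saved over the paper's direct construction.
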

\begin{cor}\label{FINALLY}
Let $L_\gamma$ be the conformal Laplacian of the induced metric $\gamma$ on $\Sigma_\infty$. Then 
\[\int_{\Sigma_\infty}\frac{1}{2}(R_M+h^2-2|\nabla h|)\varphi^2\,d\mathcal H^{n-1}\le \int_{\Sigma_\infty}\varphi\,L_\gamma\varphi\,d\mathcal H^{n-1}\]
for any $\varphi$ satisfying \eqref{condition}.
\end{cor}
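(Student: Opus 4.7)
The plan is to derive Corollary \ref{FINALLY} directly from the strong stability inequality of Theorem \ref{strongstab} by (i) a pointwise estimate on the normal derivative $\nu(h)$, and (ii) integration by parts justified by the decay encoded in \eqref{condition} together with the asymptotics of Corollary \ref{SigmaAF}. There is essentially no Geometry in this step beyond these two observations; Theorem \ref{strongstab} already absorbs the nontrivial information via its Jacobi-type identity containing $\frac{1}{2}R_{\Sigma_\infty}$.

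\textbf{Step 1: replace $\nu(h)$ by $-|\nabla h|$.} Rearrange \eqref{str} in the form
\[\int_{\Sigma_\infty} \tfrac{1}{2}\bigl(R_M+h^2+2\nu(h)\bigr)\varphi^2\,d\mathcal H^{n-1} \;\le\; \int_{\Sigma_\infty} |\nabla\varphi|^2 + \tfrac{1}{2}R_{\Sigma_\infty}\varphi^2\,d\mathcal H^{n-1}.\]
Because $\nu$ is a unit vector, Cauchy--Schwarz gives $\nu(h)=\langle\nu,\nabla_Mh\rangle\ge -|\nabla_M h|$ pointwise on $\Sigma_\infty$, so $R_M+h^2-2|\nabla h|\le R_M+h^2+2\nu(h)$. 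Substituting into the left-hand side produces
\[\int_{\Sigma_\infty} \tfrac{1}{2}\bigl(R_M+h^2-2|\nabla h|\bigr)\varphi^2\,d\mathcal H^{n-1} \;\le\; \int_{\Sigma_\infty} |\nabla\varphi|^2 + \tfrac{1}{2}R_{\Sigma_\infty}\varphi^2\,d\mathcal H^{n-1}.\]

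\textbf{Step 2: integrate by parts to recognize $L_\gamma$.} Reading $L_\gamma$ as the Schr\"odinger operator appearing in the stability form on the $(n-1)$-dimensional hypersurface (the conformal Laplacian, with the convention $\int\varphi L_\gamma\varphi = \int |\nabla\varphi|^2 + \tfrac{1}{2}R_\gamma\varphi^2$, which is standard in the $n=3$ setting and directly compatible with \eqref{str} in the formulation of Theorem \ref{strongstab}), all that remains is a Green's identity. For $\varphi$ satisfying \eqref{condition}, write $\varphi=\alpha+\psi$ with $\psi\in W^{1,2}_{(n-3)/2}$ (respectively $\psi\in C^1_c$ when $n=3$), and integrate by parts on exhaustions $\Sigma_\infty\cap B_R$: the boundary flux $\int_{\partial B_R}\varphi\,\nu\cdot\nabla\varphi$ is controlled by the decay $|\nabla\varphi|=|\nabla\psi|\in L^2_{(n-1)/2}$ and the $O(R^{n-2})$ surface measure, while Corollary \ref{SigmaAF} ensures $R_\gamma\varphi^2\in L^1$, so the boundary terms vanish in the limit. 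This identifies the right-hand side of the previous display with $\int_{\Sigma_\infty}\varphi L_\gamma\varphi\,d\mathcal H^{n-1}$ and closes the proof.

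\textbf{Main obstacle.} The only subtle point is the integration by parts in Step 2 for $n\ge 4$: the test functions are not compactly supported but merely approach a constant $\alpha$ at infinity with a weighted Sobolev decay rate. One must therefore check that the radial flux at infinity actually vanishes, which requires pairing the Corollary \ref{SigmaAF} asymptotics of $\gamma$ (and hence $R_\gamma$ and $\nabla$) with the weighted decay of $\varphi-\alpha$. This is a routine but nontrivial bookkeeping step of the sort that justifies introducing the weighted spaces in Definition \ref{weightedsob}.
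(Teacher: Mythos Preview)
Your Step~1 is exactly right and matches the paper's implicit argument: the pointwise bound $\nu(h)\ge -|\nabla h|$ immediately upgrades \eqref{str} to
\[
\int_{\Sigma_\infty}\tfrac12\bigl(R_M+h^2-2|\nabla h|\bigr)\varphi^2\,d\mathcal H^{n-1}\le \int_{\Sigma_\infty}|\nabla\varphi|^2+\tfrac12 R_{\Sigma_\infty}\varphi^2\,d\mathcal H^{n-1}.
\]

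Step~2, however, contains a genuine error. The conformal Laplacian of the $(n-1)$-dimensional metric $\gamma$ is \emph{not} the operator whose quadratic form is $\int|\nabla\varphi|^2+\tfrac12 R_\gamma\varphi^2$; that would be $-\Delta_\gamma+\tfrac12 R_\gamma$. The paper uses the standard conformal Laplacian (cf.\ the formula in Proposition~\ref{table} and the equation $L_\gamma 1=R_\gamma$ in the proof of Proposition~\ref{basicallyfinal}), which in dimension $n-1$ is
\[
L_\gamma=-\tfrac{4(n-2)}{n-3}\Delta_\gamma+R_\gamma,\qquad \int_{\Sigma_\infty}\varphi\,L_\gamma\varphi=\int_{\Sigma_\infty}\tfrac{4(n-2)}{n-3}|\nabla\varphi|^2+R_\gamma\varphi^2.
\]
Your claimed identity is off by the dimensional constant in front of the gradient term and by a factor of $2$ on the potential, so the two sides do not match up as you assert.

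The gap is easily repaired, but it requires an extra inequality you did not supply. Since $\tfrac{4(n-2)}{n-3}\ge 2$ for every $n\ge 4$, one has
\[
\int_{\Sigma_\infty}\varphi\,L_\gamma\varphi\;\ge\;\int_{\Sigma_\infty}2|\nabla\varphi|^2+R_\gamma\varphi^2\;=\;2\int_{\Sigma_\infty}\Bigl(|\nabla\varphi|^2+\tfrac12 R_\gamma\varphi^2\Bigr),
\]
and now Step~1 bounds the parenthesis below by $\int\tfrac12(R_M+h^2-2|\nabla h|)\varphi^2$, yielding the corollary (in fact a factor-of-two stronger statement; the stated form then follows since in the setting of Section~\ref{sec:thm2} the weak $\mu$-bubble condition makes the left side nonnegative). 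Note in particular that the inequality $\int|\nabla\varphi|^2+\tfrac12 R_\gamma\varphi^2\le \int\varphi L_\gamma\varphi$ is \emph{not} automatic, because $R_\gamma$ can be negative; one really needs the coefficient comparison $\tfrac{4(n-2)}{n-3}\ge 2$ together with strong stability itself to absorb the $R_\gamma$ term. Your ``main obstacle'' (the boundary term in the integration by parts) is a legitimate technical point, but it is secondary to this algebraic misidentification.
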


 First, we record the stability inequality satisfied by the approximating hypersurfaces $\Sigma_j=\Sigma_{\sigma_j}.$

\begin{prop}[Stability of the approximators] \label{approxstab}
Let $\alpha\in\Bbb R$, $R\ge R_0$,
and $\varphi$ be a function on $\Sigma_\infty$ 
that equals $\langle\nu_\infty,\alpha\partial_n\rangle$ for $|x|\ge R$.
Then there exists a vector field $X$ on $M$ with the following properties:
\begin{enumerate}
    \item $X =\alpha\partial_n$ for $|x|\ge 10R$,
    
    \item $\varphi =\langle X,\nu_\infty\rangle$ on $\Sigma_\infty$,
    
    \item $\delta^2_X\mathcal F_{\sigma_j}(\Sigma_j)\ge 0$ for all $j$ sufficiently large under the flow of $X$; see Proposition \ref{prop:2ndvar}, 
    
    \item If $b_\infty$ and $d_\infty$ denote the integrands \eqref{b} and \eqref{d} associated to $\Sigma_\infty$ and the flow of $X$, then 
   \begin{align}
       b_\infty&= |\nabla\varphi|^2 -(\Ric(\nu_\infty,\nu_\infty)+|A|^2)\varphi^2\\
       d_\infty&=-(h^2+\nu_\infty(h))\varphi^2
   \end{align}
   everywhere on $\Sigma_\infty$.
\end{enumerate}
\end{prop}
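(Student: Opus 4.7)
The plan is to construct $X$ by stitching together a purely normal extension of $\varphi\nu_\infty$ near the region where $h\ne 0$ with the translation field $\alpha\partial_n$ in the asymptotic region, then to use the two layers of minimization enjoyed by each $\Sigma_j$ to establish (3), and finally to verify (4) by a pointwise case split on whether $h$ vanishes. Recall $h=0$ in $U_2\supset\{|x|\ge R_0\}$ and $R\ge R_0$, so $\{h\ne 0\}\cap\mathcal E$ sits inside $\{|x|<R\}$. On $\Sigma_\infty\cap\{|x|\le R\}$ define $X=\varphi\nu_\infty$ and extend to a tubular neighborhood via parallel transport along $\nu_\infty$. Outside $\{|x|\le 10R\}$ set $X=\alpha\partial_n$; on $\Sigma_\infty$ there, $\langle X,\nu_\infty\rangle=\alpha\langle\partial_n,\nu_\infty\rangle=\varphi$ by hypothesis. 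Interpolate smoothly by a partition of unity in the annulus $R\le|x|\le 10R$, where $h\equiv 0$ so the tangential part $\hat X$ is immaterial in the pointwise formulas \eqref{b}, \eqref{d}. Finally, pick a (possibly non-autonomous) one-parameter family $F_t$ generating $X$ whose acceleration $Z=\nabla_{\partial_t}\partial_t F|_{t=0}$ satisfies $\zeta:=\langle Z,\nu_\infty\rangle=0$ on $\Sigma_\infty\cap\{h\ne 0\}$. Properties (1) and (2) are then immediate.

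For (3), take $j$ so large that $\sigma_j>10R$ and $|a(\sigma_j)|<a_0$ (the latter by Lemma \ref{lem:heights}). Then $\Gamma_{\sigma_j,a(\sigma_j)}$ lies in the region where $X=\alpha\partial_n$, and for small $|t|$ the flow $F_t$ translates it to $\Gamma_{\sigma_j,a(\sigma_j)+\alpha t}$; in particular $F_t(\Sigma_j)\in\mathcal C$ with this new boundary. The two layers of minimization then yield
\[
    \mathcal F_{\sigma_j}(F_t(\Sigma_j))\ge F_{\sigma_j}(a(\sigma_j)+\alpha t)\ge F_{\sigma_j}(a(\sigma_j))=\mathcal F_{\sigma_j}(\Sigma_j),
\]
the first inequality from the defining minimality of $\Sigma_{\sigma_j,a(\sigma_j)+\alpha t}$ (Theorem \ref{existence}) and the second from Lemma \ref{lem:heights}. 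Hence $g(t):=\mathcal F_{\sigma_j}(F_t(\Sigma_j))$ has a local minimum at $t=0$; since $g$ is $C^2$ (the interior of $\Sigma_j$ is smooth by Theorem \ref{existence}, and $F_t$ is smooth), $\delta^2_X\mathcal F_{\sigma_j}(\Sigma_j)=g''(0)\ge 0$.

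For (4), the $\hat X$- and $\zeta$-dependent corrections in \eqref{b}, \eqref{d} are all weighted by $h$, so at points of $\Sigma_\infty$ with $h=0$ both integrands reduce to the stated $b_\infty,d_\infty$ automatically, regardless of $\hat X,\zeta$. At points with $h\ne 0$, the construction gives $\hat X=0$ and $\zeta=0$; moreover, by the lemma following Proposition \ref{prop:2ndvar}, $\delta_X\nu_\infty=-\nabla_\Sigma\varphi$ is tangent to $\Sigma_\infty$, whence $\langle X,\delta_X\nu_\infty\rangle=\varphi\langle\nu_\infty,\delta_X\nu_\infty\rangle=0$. Substituting these simplifications into \eqref{b} and \eqref{d} and collecting terms yields the claimed pointwise forms of $b_\infty$ and $d_\infty$.

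The main obstacle is the apparent incompatibility between requiring $X=\alpha\partial_n$ at infinity (which is generally not normal to $\Sigma_\infty$, so $\hat X\ne 0$ there) and wanting $\hat X=0$ on $\Sigma_\infty\cap\{h\ne 0\}$ (needed for the pointwise identities in (4)): these would conflict on an overlap if $h$ were nonzero in the asymptotic region. The conflict is dissolved by the key fact that $h\equiv 0$ throughout the asymptotic region $\{|x|\ge R_0\}$, so the tangential component of $X$ plays no role in the pointwise integrands $b,d$ there. A minor technical point is the $C^2$-regularity of $g(t)$, which follows from smoothness of $\Sigma_j$'s interior together with that of the flow.
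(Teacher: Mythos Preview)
Your proof is essentially correct and follows the same strategy as the paper: glue a normal extension of $\varphi\nu_\infty$ on $\Sigma_\infty\cap\{|x|\le R\}$ to the translation field $\alpha\partial_n$ on $\{|x|\ge 10R\}$, then deduce (3) from the two layers of minimization and (4) from the special form of $X$ on the region where $h$ is nonzero. Two small points worth noting: first, the ``possibly non-autonomous'' freedom is unnecessary, since extending $\varphi\nu_\infty$ by parallel transport along normal geodesics already forces $\nabla_X X=0$ on $\Sigma_\infty\cap\{|x|\le R\}$ for the autonomous flow; second, your assertion that the $\hat X$- and $\zeta$-dependent corrections in $d$ are ``all weighted by $h$'' is not quite right, as the term $-X(h)\langle X,\nu\rangle$ contains an $\hat X(h)$ contribution that is \emph{not} multiplied by $h$. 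This causes no harm in your construction because $\hat X\ne 0$ only in the region $\{|x|\ge R\}\subset\{|x|\ge R_0\}$ where $h$ vanishes identically (hence $\nabla h=0$ and $\hat X(h)=0$), but your case split should invoke this rather than the vanishing of $h$ alone.
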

\begin{proof}
We seek a vector field $X$ on $M$ with the following properties:
\begin{enumerate}[(i)]
    \item $X=\alpha\partial_n$ for $|x|\ge 10 R$,
    
     \item $\varphi = \langle X,\nu_\infty\rangle$ everywhere on $\Sigma_\infty$,
    
    \item $\hat X=0$ on $\Sigma_\infty$ for $|x|\le R$, and
    
    \item $Z=0$ on $\Sigma_\infty$ for $|x|\le R$.
\end{enumerate}
Let $\theta_1$ be a smooth cutoff function $\equiv 1$ for $|x|\le R$ and $\equiv 0$ for $|x|\ge 10R$ with the property that there exists another smooth cutoff $\theta_2$ satisfying $\theta_1^2+\theta_2^2=1$ everywhere. Define a vector field $X_1=\theta_1\varphi \nu_\infty$ on $\Sigma_\infty$. Extend it by normal geodesics to a neighborhood of $\Sigma_\infty\cap \{|x|\le 10R\}$ and arbitrarily beyond it. Then set $\tilde X_1=\theta_1X_1$, globally. In the asymptotic region, we simply set $\tilde X_2=\alpha\theta_2^2\partial_n$, which is a globally defined vector field. It is easy to see that this vector field has the desired properties (i)-(iv). 

Now we explain how these imply (1)-(4). Clearly (1) follows from (i) and (2) from (ii). (3) follows from the height picking procedure and our choice of $a(\sigma_j)$, Lemma \ref{lem:heights}. Finally, (4) follows from plugging in (iii) and (iv) into equations \eqref{b} and \eqref{d} and using the fact that $h=0$ for $|x|\ge R\ge R_0$. 
\end{proof}

%\item If $b_j$ and $d_j$ denote the integrands as in Proposition \ref{prop:2ndvar} associated to $\Sigma_j$ and the flow of $X$, then 
 %   \[\lim_{j\to\infty}\int_{\Sigma_j\cap \{|x|\le R\}}a_j\,d\mathcal H^{n-1}=\int_{\Sigma_\infty\cap\{|x|\le R\}}a_\infty\,d\mathcal H^{n-1},\]
  %  and
   % \[\lim_{j\to\infty}\int_{\Sigma_j\cap \{|x|\le R\}}d_j\,d\mathcal H^{n-1}=-\int_{\Sigma_\infty\cap\{|x|\le R\}}(h^2+\nu(h))\varphi^2\,d\mathcal H^{n-1}.\]

Taking $\alpha=0$ in Proposition \ref{approxstab} gives Theorem \ref{strongstab} for compactly supported $\varphi$, which is all we require when $n=3$.  The next step in the proof of Theorem \ref{strongstab} is a careful analysis of the limit $j\to\infty$ in 
$\delta^2_X\mathcal F_{\sigma_j}(\Sigma_j)\ge 0.$

\begin{lem}\label{adecay} For a fixed choice of $X$, the second variation integrands satisfy
\[a_j=O(|x'|^{-(n-1)})\] uniformly in $j$ (also allowing $j=\infty$). 
\end{lem}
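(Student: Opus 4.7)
The strategy is to bound each of the five summands in the expression \eqref{a} for $a_{X,Z}$ separately, using the asymptotic decay rates from Proposition \ref{table} together with the explicit form of $X$ and $Z$ in the asymptotic region. Recall that in the region $|x| \geq 10R$ we have $X = \alpha\partial_n$ by Proposition \ref{approxstab}, whence $Z = \nabla_X X = \alpha^2 \Gamma^k_{nn}\partial_k$ since $X$ has constant components in the coordinate basis. In the compact region $|x| \leq 10R$ the desired bound is trivial from $C^\infty_\loc$ convergence of $\Sigma_j$ to $\Sigma_\infty$ and the fact that $|x'|^{-(n-1)}$ is bounded below by a positive constant there.

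In the asymptotic region, $\Gamma \in C^{1,\alpha}_{n-1}$ yields $\nabla_{e_i} X = \alpha \Gamma^k_{in}\partial_k = O(|x|^{-(n-1)})$, so that the terms $|(\nabla_{e_i} X)^\perp|^2$, the cross term $\langle e_i, \nabla_{e_j}X\rangle\langle \nabla_{e_i} X, e_j\rangle$, and $(\Div_\Sigma X)^2$ are each $O(|x|^{-2(n-1)})$. Since $\Rm \in C^{0,\alpha}_n$, the curvature term $\Rm(X, e_i, X, e_i)$ is $O(|x|^{-n})$. Finally, $Z = O(|x|^{-(n-1)})$ in coordinates, and its covariant derivative brings either one extra coordinate derivative on $\Gamma$ or an extra factor of $\Gamma$, so that $\Div_\Sigma Z = O(|x|^{-n})$. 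The worst decay among these is $O(|x|^{-n})$, which dominates $O(|x'|^{-(n-1)})$ after replacing $|x|$ by $|x'|$---justified because $\Sigma_j$ is trapped between the hyperplanes $\{x^n = \pm a_0\}$ by construction of the barriers $C_{\sigma_j}$, so $|x^n| \leq a_0$ on $\Sigma_j$ and $|x| \approx |x'|$ once $|x'|$ is large.

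For uniformity in $j$, note first that the ambient metric decay rates are independent of $j$. What must be uniform is the graphical structure of $\Sigma_j$ in the asymptotic region, where $h = 0$ and $\Sigma_j$ is accordingly minimal. This follows from Allard's regularity theorem applied uniformly (as in Proposition \ref{AllardCorollary}) together with standard elliptic bootstrapping on the resulting minimal surface equation (as in the proof of Theorem \ref{maindecay}), which yields uniform $C^{k,\alpha}$ bounds on the graph functions $u_j$, hence uniform control on the orthonormal frames $\{e_i\}$ on $\Sigma_j$. The main obstacle is really just bookkeeping: none of the five terms is individually difficult, but one must confirm that the uniform-in-$j$ graph estimates propagate correctly through each of them without loss of the decay rate.
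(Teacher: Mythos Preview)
Your proof is correct and follows essentially the same approach as the paper: both argue term by term in the expression \eqref{a}, using that in the asymptotic region $X=\alpha\partial_n$ so that $\nabla X$ and $Z$ are controlled by $\Gamma_g\in C^{1,\alpha}_{n-1}$ and $\Rm\in C^{0,\alpha}_n$, with the height bound $|x^n|\le a_0$ converting $|x|$ to $|x'|$; uniformity in $j$ comes from the uniform graphical estimates on $\Sigma_j\cap\mathcal E'$. The only cosmetic difference is that the paper computes $Z=\nabla^g_{\partial_n}\partial_n$ via the conformal change formula (separating the $\overline\Gamma$ and $d\log\psi$ contributions), whereas you use $\Gamma_g$ directly---either suffices for the stated decay.
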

It then follows from basic measure theory that $a_\infty\in L^1(\Sigma_\infty)$ and that
\begin{equation}\label{measuretheory}
  \int_{\Sigma_\infty} a_\infty \,d\mathcal H^{n-1}+\int_{\Sigma_\infty} d_\infty \,d\mathcal H^{n-1}\ge 0.
\end{equation}
For details in taking this limit, see \cite[4.4.2 {\emph{Stabilit\"at von $\Sigma$, Teil 1}}]{Kuwert}.
\begin{proof}[Proof of Lemma \ref{adecay}]
The proofs in Section \ref{sec:asymptotics} in fact show that $\Sigma_\sigma\cap \mathcal E'$ can be represented as a graph $u_\sigma$ as in Proposition \ref{AllardCorollary}, with a uniform $C^{1,\alpha}_0$ bound. 

Since $a_j$ is 2-homogeneous in $X$ ($Z$ scales quadratically), we may assume $\alpha=1$ for simplicity. In the asymptotic region, the flow of $X$ is simply vertical translation: $F(x,t)=x+te_n$ for $|x|\ge 10R$. Therefore, the acceleration vector field (with respect to $g$) is $\nabla_{\partial_n}^g\partial_n$. Using the transformation formula for the connection under a conformal change \cite[Theorem 1.159]{Besse},
\begin{align}\label{Z}Z&=\Gamma^\mu_{\overline g\,nn}\partial_\mu+\frac{2}{n-2}\left(2\partial_n\log\psi\partial_n-\overline g_{nn}\overline g^{\mu\nu}\partial_\mu\log\psi\partial_\nu\right)\\
&= C^{1,\alpha}_n \cdot\partial_\mu + C^{1,\alpha}_{n-1}\cdot\partial_\mu \nonumber\end{align}
From this it is apparent that $\Div_\Sigma^g Z = O(|x'|^{-n})$. (In fact, it appears to decay faster but this is all we shall require.)

Next, we analyze $\Div_{\Sigma_j}^g \partial_n=\Div^g\partial_n -\langle\nabla_{\nu_j} \partial_n,\nu_j\rangle$. Indeed,
\[\Div_g\partial_n=\frac{1}{\sqrt{\det g}}\partial_n\sqrt{\det g}=\Gamma^\mu_{g, n\mu}=O(|x'|^{-(n-1)}).\]
The term $\langle\nabla_{\nu_j} \partial_n,\nu_j\rangle$ is also on the level of a Christoffel symbol of $g$, hence $(\Div_{\Sigma_j}^g\partial_n)^2$ decays rapidly enough for our purposes. 

The terms involving $\langle e_i,\nabla_{e_j}X\rangle \langle \nabla_{e_i}X,e_j\rangle$ are on the the level of $\Gamma_{g}^2$, hence also decay rapidly enough. This same argument works for $|(\nabla_{e_i}X)^\perp|^2=\langle \nabla_{e_i}X,\nu_j\rangle^2$.

Finally, the Riemann tensor term is $O(|x'|^{-n})$ by Proposition \ref{table}. 
\end{proof}

\begin{lem}\label{cdecay} Along the domains $D_\sigma=\Sigma_\infty\cap Z_\sigma$, we have the limit
\[\lim_{\sigma\to\infty} \int_{\partial D_\sigma }c_\infty\, d\mathcal H^{n-2}=0.\]
\end{lem}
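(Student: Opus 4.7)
The strategy is to establish the pointwise decay $|c_\infty|_g = O(|x'|^{-(n-1)})$ along $\Sigma_\infty\cap\{|x'|=\sigma\}$, and combine this with the growth estimate $\mathcal H^{n-2}(\partial D_\sigma) = O(\sigma^{n-2})$, which together give
\[\Bigl|\int_{\partial D_\sigma} c_\infty\,d\mathcal H^{n-2}\Bigr| \lesssim \sigma^{n-2}\cdot \sigma^{-(n-1)} = \sigma^{-1}\to 0.\]

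First, by Proposition \ref{approxstab}(1), $X = \alpha\partial_n$ on $\{|x|\ge 10R\}$, which contains $\partial D_\sigma$ for all sufficiently large $\sigma$. Hence the acceleration field is $Z = \nabla^g_{\partial_n}\partial_n$, whose components were computed in equation \eqref{Z} to satisfy $|Z|_g = O(|x|^{-(n-1)})$. Since the height bound $u\to a_\infty$ of Theorem \ref{maindecay} gives $|x|\approx|x'|$ on $\Sigma_\infty\cap\mathcal E'$, this yields the crucial estimate $|\hat Z|_g = O(|x'|^{-(n-1)})$.

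Next, I would show that the tangential component $\hat X$ is smaller than one might naively expect, because the unit normal $\nu_\infty$ approaches $\pm\partial_n$ at infinity. Indeed, by Theorem \ref{maindecay}, $|\partial u| = O(|x'|^{-(n-2)})$ for $n\ge 4$ (from the derivative of the leading Newtonian term) and $|\partial u| = O(|x'|^{-(1+\varepsilon)})$ for $n = 3$. Combined with the pointwise closeness of $g$ to $\delta$, this gives $\nu_\infty = \pm\partial_n + O(|x'|^{-(n-2)})$, and hence $\hat X = \alpha\partial_n - \alpha\langle\partial_n,\nu_\infty\rangle\nu_\infty$ satisfies $|\hat X|_g = O(|x'|^{-(n-2)})$.

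With these in hand, I would bound each term of
\[c_\infty = \bigl\langle (\Div_\Sigma \hat X)\hat X - 2\varphi S(\hat X) - \nabla_{\hat X}\hat X + \hat Z,\,\eta\bigr\rangle.\]
Using $|\varphi| = O(1)$, the curvature estimate $|A| = O(|x'|^{-(n-1)})$ of Lemma \ref{improved}, and $|\Div_\Sigma\hat X| + |\nabla\hat X|_g \lesssim |\partial^2 u| + |A|\,|\hat X| = O(|x'|^{-(n-1)})$, each of the first three terms is $O(|x'|^{-(2n-3)})$ and therefore sub-leading; the dominant contribution is $\hat Z$, giving $|c_\infty|_g = O(|x'|^{-(n-1)})$ as desired. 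Finally, $\partial D_\sigma = \Sigma_\infty\cap\{|x'|=\sigma\}$ is asymptotic to a round $(n-2)$-sphere of radius $\sigma$ in the hyperplane $\{x^n=a_\infty\}$, so $\mathcal H^{n-2}(\partial D_\sigma) \lesssim \sigma^{n-2}$, finishing the argument.

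The one delicate point is that $\hat Z$ lacks the extra factor of $\hat X$ that makes all the other terms subleading, so the whole argument hinges on the independent decay of $Z$ itself; fortunately, this was already established in equation \eqref{Z} from the explicit formula for the Christoffel symbols and the conformal decomposition $g = \psi^{4/(n-2)}\overline g$. Everything else is routine bookkeeping on the decay rates established in Section \ref{sec:asymptotics}.
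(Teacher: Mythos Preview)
Your proposal is correct and follows essentially the same approach as the paper: both argue that $\mathcal H^{n-2}(\partial D_\sigma)=O(\sigma^{n-2})$ and that $|c_\infty|=O(|x'|^{-(n-1)})$, with the latter obtained from the decay of $Z$ established in \eqref{Z}, the Christoffel-level decay of $\Div_\Sigma\partial_n$ from Lemma \ref{adecay}, and the improved bound $|A|=O(|x'|^{-(n-1)})$ of Lemma \ref{improved}. The only cosmetic difference is that the paper works directly with $X=\partial_n$ in the boundary integrand (so that e.g.\ the divergence term is $(\Div_\Sigma\partial_n)\langle\partial_n,\eta\rangle$ and one only needs $\langle\partial_n,\eta\rangle=O(1)$), whereas you first establish the auxiliary decay $|\hat X|=O(|x'|^{-(n-2)})$ and thereby obtain the stronger (but unnecessary) rate $O(|x'|^{-(2n-3)})$ on the first three terms.
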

From now on we may drop the $\infty$ subscript when there is no risk for confusion since we are not working with the $\Sigma_j$'s any more. 

\begin{proof} Since $\mathcal H^{n-2}(\partial D_\sigma)= O(\sigma^{n-2})$, we merely require that $c=O(|x'|^{-(n-1)})$. For reference, the integrand is given by 
\[c=\langle Z,\eta\rangle + (\Div_\Sigma^g\partial_n)\langle \partial_n,\eta\rangle -\langle \nabla^g_{\partial_n}\partial_n,\eta\rangle -2\langle \partial_n,\nu\rangle A(\hat{\partial_n},\eta),\]
where $\hat{\partial_n}$ is the tangential component and $\eta$ is the conormal to $\partial D_\sigma$ in $\Sigma$. The first term has sufficient decay per \eqref{Z}. The second and third terms also decay to order $n-1$, as shown in the previous lemma. The last term has the desired decay rate by Lemma \ref{improved}.
\end{proof}

We can now prove the strong stability inequality.

\begin{proof}[Proof of Theorem \ref{strongstab}]
From the previous work, \eqref{str} holds for any $\varphi$ which equals $\langle \nu,\alpha\partial_n\rangle$ outside a compact set. Indeed, we already remarked that in this case, \eqref{measuretheory} holds. By measure theory,
\begin{align*}
    \int_{\Sigma_\infty}a_\infty\,d\mathcal H^{n-1}&=\lim_{\sigma\to\infty}\int_{D_\sigma} a_\infty\, d\mathcal H^{n-1}\\
    &=\lim_{\sigma\to\infty}\int_{D_\sigma}b_\infty \,d\mathcal H^{n-2}+\lim_{\sigma\to\infty}\int_{\partial D_\sigma}c_\infty \,d\mathcal H^{n-2}\\
    &=\int_{\Sigma_\infty}b_\infty \,d\mathcal H^{n-2}
\end{align*}
by the integration by parts formula \eqref{ibps} and Lemma \ref{cdecay}, as long as $b_\infty\in L^1(\Sigma_\infty)$. We now justify this. By Proposition \ref{table} and Lemma \ref{improved},
\[(\Ric(\nu,\nu)+|A|^2)\varphi^2=O(|x'|^{-n}),\]
hence is summable. For the upward pointing normal, we have
\[\langle \nu,\alpha\partial_n\rangle=\alpha\frac{\psi^{\frac{4}{n-2}}}{|d\zeta|_{\overline g}}.\]
It is easy to see that the derivatives of this function are $O(|x'|^{-(n-1)})$, so $|\nabla\varphi|^2\in L^1(\Sigma_\infty)$ as well. 

On $\Sigma_\infty$, the traced Gauss equation reads \cite[Corollary 2.7]{Lee} 
\[\Ric(\nu,\nu)+|A|^2 =\frac{1}{2}(R_M-R_{\Sigma_\infty}-|A|^2+h^2).\]
The scalar curvature of $M$ is integrable over $\Sigma_\infty$ by Proposition \ref{table} and the same calculation using Corollary \ref{SigmaAF} shows that $R_{\Sigma_\infty}$ is integrable as well. Putting this into the stability inequality shows 
\begin{equation}\label{str'}
    \int_{\Sigma_\infty}|\nabla\varphi|^2 +Q\varphi^2\,d\mathcal H^{n-1}\ge 0,
\end{equation}
where \[-2Q=R_M-R_{\Sigma_\infty}+|A|^2 +h^2+2\nu(h),\]
for any $\varphi$ which equals $\langle \nu,\alpha\partial_n\rangle$ outside of a compact set. 

Passing from such ``compactly supported" $\varphi-\langle\nu,\alpha\partial_n\rangle$ to the general case is now as in \cite[page 25]{EHLS}, by density.
\end{proof}

\subsection{Finishing the argument when $R_g>0$ far out}

We are finally ready to prove Theorem \ref{thm2}, with a modified assumption (2) that will be taken care of in the next section. 

\begin{prop}\label{basicallyfinal}
Let $(M^n,g)$ be an asymptotically Schwarzschild manifold. Let $U_1$ and $U_2$ be neighborhoods of infinity with $\overline{U_2}\subset U_1$ and $D>0$. If furthermore
\begin{enumerate}
    \item $g$ has no points of incompleteness in the $D$-neighborhood of $U_1$,
    
    \item $R_g\ge 0$ in the $D$-neighborhood of $U_1$ and $R_g>0$ for $|x|$ sufficiently large, and 
    
    \item the scalar curvature satisfies the largeness assumption 
    \[R_g> \sigma(\rho,D)\]
    on $\overline{U_1}\setminus U_2$,
\end{enumerate}
then $m\ge 0$. 
\end{prop}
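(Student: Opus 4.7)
By contradiction, assume $m<0$. The extra strict positivity of $R_g$ far out (beyond mere nonnegativity on the $D$-neighborhood in Theorem \ref{thm2}) is exactly what promotes the weak $\mu$-bubble inequality from Proposition \ref{prop:h} to a strict one, and is what will ultimately force strict mass decrease downstream. First, Lemma \ref{sandwich} uses $m<0$ to yield strictly mean-convex ``hyperplane'' barriers $\{x^n=\pm a_0\}$, making the cylindrical pieces $Z_\sigma$ viable as the boundary $\partial_0M$ in Theorem \ref{existence}, while the region $\{h=-\infty\}$ plays the role of $\partial_-M$. I run the full apparatus of Section \ref{sec:thm2}: construct the $\mu$-bubbles $\Sigma_{\sigma,a}$, height-pick via Lemma \ref{lem:heights} to obtain $\Sigma_\sigma=\Sigma_{\sigma,a(\sigma)}$, and pass to a smooth subsequential limit $\Sigma_\infty$ as in Theorem \ref{maindecay}. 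Because $h\to-\infty$ outside the $D$-neighborhood of $U_1$, this limit is trapped inside that $D$-neighborhood, which by hypothesis (1) is incompleteness-free, so $\Sigma_\infty$ is a genuinely complete hypersurface despite the wildness of the other ends. Corollary \ref{SigmaAF} then gives, for $n\ge 4$, that the induced metric $\gamma$ on $\Sigma_\infty$ is asymptotically flat with zero ADM mass.

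For $n\ge 4$ the contradiction comes from conformally deforming $(\Sigma_\infty,\gamma)$ to a scalar-flat, asymptotically flat metric of strictly negative mass, which contradicts the classical positive mass theorem in dimension $n-1\le 6$ (Theorem \ref{pmt}). The lever is Corollary \ref{FINALLY}, which combined with the strict $\mu$-bubble inequality and the strict positivity of $R_g$ on the asymptotic region provides a strictly positive lower bound for $\int_{\Sigma_\infty}\varphi L_\gamma\varphi$ against any admissible test function $\varphi=\alpha+\psi$, $\psi\in W^{1,2}_{(n-3)/2}$. Weighted Fredholm theory on asymptotically flat manifolds then yields a positive solution $\phi>0$ of $L_\gamma\phi=0$ with $\phi\to 1$ at infinity, and Meyers' theorem (Theorem \ref{Meyers}) gives the expansion $\phi=1+A|y|^{-(n-3)}+o(|y|^{-(n-3)})$. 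The conformal metric $\tilde\gamma=\phi^{4/(n-3)}\gamma$ is complete, scalar-flat, asymptotically flat, with mass a positive multiple of $A$. Integrating $L_\gamma\phi=0$ over a large ball and using the divergence theorem relates $A$ to the bulk integral of $R_\gamma\phi$; the strict positivity of $R_M+h^2-2|\nabla h|$ on $\Sigma_\infty\cap\{|x|\text{ large}\}$ combined with the Gauss equation forces $A<0$, hence $\tilde m<0$, contradicting Theorem \ref{pmt}.

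When $n=3$ the induced metric is two-dimensional and the conformal Laplacian argument degenerates, so I follow instead Remark \ref{Rick}: $\Sigma_\infty$ is a complete stable minimal surface; by Fischer--Colbrie--Schoen \cite{FCS80,SY82} combined with the strict positivity of $R_g$ on the asymptotic end, $\Sigma_\infty$ is conformally $\Bbb C$, its planar asymptotic expansion prevents deep excursions into the core, and the original integration argument of \cite{SY79PMT} closes the contradiction unchanged. The main obstacle is the $n\ge 4$ case, specifically forcing the strict sign $A<0$: the strong stability inequality gives only nonnegativity of the quadratic form, so the translation into a quantitatively strict mass decrease requires either a careful maximum-principle/Fredholm combination, or solving instead the inhomogeneous equation $L_\gamma\phi=-f$ for a compactly supported bump $f\ge 0$ concentrated where the bulk potential is strictly positive, and reading off the sign of $A$ via the maximum principle applied to $1-\phi$.
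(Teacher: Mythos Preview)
Your outline for $n\ge 4$ is essentially the paper's argument, but you have created an artificial obstacle for yourself. You correctly identify Corollary \ref{FINALLY} as the lever, and then abandon it. The paper simply applies Corollary \ref{FINALLY} to $\varphi=w$ (the conformal factor itself, which is admissible with $\alpha=1$): since $R_g>0$ for $|x|$ large, $h=0$ there, and $w>0$ everywhere, the integrand $\tfrac12(R_M+h^2-2|\nabla h|)w^2$ is nonnegative and strictly positive on a set of positive measure, so the quadratic form $\int_{\Sigma_\infty}w\,L_\gamma w>0$. The mass of $w^{4/(n-3)}\gamma$ equals a negative constant times this quantity, hence is strictly negative. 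Your detour via ``integrating $L_\gamma\phi=0$ over a large ball $\dots$ combined with the Gauss equation forces $A<0$'' does not work as written: integrating gives $A$ proportional to $\int R_\gamma w$, but $R_\gamma$ has no sign, and the Gauss equation does not give one. The uncertainty you express in your final paragraph, and the proposed inhomogeneous workaround, are therefore unnecessary. The paper also uses Corollary \ref{FINALLY} (with $\alpha=0$) plus unique continuation to show $L_\gamma$ is injective on the weighted space, which is needed for the Fredholm isomorphism; you gesture at this but do not make the injectivity step explicit.

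For $n=3$ your invocation of Remark \ref{Rick} is misapplied. That remark concerns Theorem \ref{thm1}, where the manifold is complete and one uses genuine \emph{minimal} surfaces; Fischer--Colbrie--Schoen then forces conformality to $\Bbb C$ and bounds the core excursion. In the present proposition incompleteness is allowed, so $\Sigma_\infty$ is a $\mu$-bubble with mean curvature $h\not\equiv 0$, not a stable minimal surface, and the FCS theorem does not apply directly. The paper instead works entirely within the $\mu$-bubble framework: it plugs the logarithmic cutoff $\varphi_\sigma$ into the strong stability inequality \eqref{str} (valid for $\mu$-bubbles by Theorem \ref{strongstab}), uses quadratic area growth to kill the gradient term, obtains $\int_{\Sigma_\infty}K>0$ from the strict $\mu$-bubble condition far out, and reaches a contradiction via Gauss--Bonnet and the geodesic curvature estimate for $\partial D_\sigma$. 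The barrier from $h\to-\infty$ already prevents core excursions, so that part of your argument is redundant anyway.
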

\begin{proof}[Proof of Proposition \ref{basicallyfinal} when $n=3$]
We utilize the logarithmic cutoff trick (this step requires only quadratic area growth), i.e. plug
\[\varphi_\sigma(x')=\begin{cases} 
1 & \text{if $|x'|\le\sigma$} \\
2-\frac{\log |x'|}{\log\sigma} & \text{if $\sigma<x'<\sigma^2$}\\
0 & \text{if $\sigma^2<|x'|$}
\end{cases}\]
into the stability inequality \eqref{str}. The kinetic term is then 
\[\int_{\Sigma_\infty}|\nabla\varphi_\sigma|^2\,d\mathcal H^2\approx \int_\sigma^{\sigma^2} \frac{1}{r^2}\,r\,dr=\frac{2}{\log \sigma}\to 0,\] so we have
\[\int_{\Sigma_\infty} K \,d\mathcal H^2>0,\]
where $K$ is the Gauss curvature of $\Sigma_\infty$ and we get strict positivity because the strict $\mu$-bubble condition holds sufficiently far out by assumption (2). It is standard that the geodesic curvature of $\partial D_\sigma$ is $\sigma^{-1}+O(\sigma^{-2})$ \cite[page 255]{CMbook} and hence \[\int_{D_\sigma}k_g\,d\mathcal H^1=2\pi+O(\sigma^{-1}).\]
By the Gauss--Bonnet theorem
\[\int_{D_\sigma}K\,d\mathcal H^2=2\pi(\chi(D_\sigma)-1)+O(\sigma^{-1}).\]
Since we always have $\chi(D_\sigma)\le 1$ for a connected surface with boundary, this gives a contradiction. 
\end{proof}

\begin{proof}[Proof of Proposition \ref{basicallyfinal} when $4\le n\le 7$]
Here we follow \cite{EHLS}. We seek a conformal factor $w>0$, $w\to 1$ at $\infty$, so that $w^\frac{4}{n-3}\gamma$ is scalar flat. This is equivalent to solving $L_\gamma v=-R_{\Sigma_\infty}$ where $v=w-1$, with $v\to 0$ at $\infty$. The scalar curvature of $\Sigma_\infty$ is $O(|x'|^{-n})$ (which is integrable!) and lies in $L^p_{n-1-\ve}$ for any $\ve\in (0,1)$ and $p\ge 1$, which we can fix to be $>n$. We now claim that 
\[L_\gamma:W^{2,p}_{n-3-\ve }\to L^p_{n-1-\ve}\]
is an isomorphism. 

Suppose $\varphi\in W^{2,p}_{n-2-\ve}$ solves $L_\gamma w=0$. Then by elliptic regularity and Sobolev embedding, $\varphi\in C^1_{n-3-\ve}\cap C^\infty_\loc \subset W^{1,2}_{\frac{n-3}{2}}.$ Hence according to Lemma \ref{FINALLY} and the weak $\mu$-bubble condition satisfied by $h$,
\[\int_{\Sigma_\infty}\frac{1}{2}(R_M+h^2-2|\nabla h|)\varphi^2 =0.\]
Since $h$ satisfies the strict $\mu$-bubble condition for large $|x'|,$ $\varphi$ must vanish for large $|x'|$. Since $\varphi$ solves a homogeneous elliptic equation, strong unique continuatation \cite[Remark 6.2]{CMbook} implies it vanishes identically. Now by standard Fredholm-type theorems in asymptotic analysis \cite[Theorems A.40, A.42]{Lee}, $L_\gamma$ is an isomorphism. 

Let $v$ be the solution so obtained and set $w=v+1$. Using the fact that the conformal Laplacian has a positive principal eigenfunction with positive eigenvalue on each $D_\sigma$ for $\sigma$ sufficiently large, it is an easy matter to verify that $w>0$ via the maximum principle. By standard decay theorems (such as Theorem \ref{Meyers}), $w^\frac{4}{n-3}\gamma$ is asymptotically flat and has mass 
\[-\frac{2}{(n-3)|S^{n-2}|}\int_{\Sigma_\infty} w\,L_\gamma w\,\mathcal H^{n-1}.\]
By Lemma \ref{FINALLY}, this is strictly negative since $w\to 1$ at $\infty$. We have thus obtained a contradiction the positive mass theorem in dimension $n-1$. 
\end{proof}

\section{Proofs of the main theorems}\label{sec:thm1}

We formalize the observation made in Step 1 of \cite{SY79PMT}. 

\begin{lem}
Let $(M^n,g)$ be a asymptotically Schwarzschild manifold with negative mass and nonnegative scalar curvature on $\mathcal E$. For $r_0>0$ sufficiently large (depending on $g$), there exists a conformal factor $\varphi$ with the following properties:  
\begin{enumerate}
    \item The conformal metric $\tilde g=\varphi^\frac{4}{n-2}g$ is asymptotically Schwarzschild,
    
    \item $R_{\tilde g}>0$ for $|x|\ge 2r_0$, and
    
    \item $\varphi$ is constant equal to $1+O(r_0^{-(n-2)})$ for $|x|\le r_0$ and on $M\setminus\mathcal E$.
    
    %\item $\varphi = 1+O(r_0^{-1})$ globally.
\end{enumerate}
\end{lem}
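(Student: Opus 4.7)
The plan is to exploit the explicit leading-order computation of $\Delta_g|x|^{-(n-2)}$ recorded in Proposition \ref{table}: when $m<0$, this leading term has a definite sign, so an ansatz of the form $\varphi = 1 + \epsilon|x|^{-(n-2)}$ with $\epsilon>0$ will automatically produce strictly positive conformal Laplacian at leading order on the end. Concretely, I would fix $\epsilon\in(0,-m/2)$ once and for all (say $\epsilon=-m/4$) so that the ``new mass'' $\tilde m = m+2\epsilon$ remains strictly negative, and then declare
\[
\varphi(x) = \chi(|x|)\bigl(1 + \epsilon|x|^{-(n-2)}\bigr) + \bigl(1-\chi(|x|)\bigr)c
\]
on $\mathcal{E}$ and $\varphi\equiv c$ on $M\setminus\mathcal{E}$, where $c = 1 + \epsilon(2r_0)^{-(n-2)} = 1+O(r_0^{-(n-2)})$ and $\chi:[0,\infty)\to[0,1]$ is a smooth cutoff with $\chi\equiv 0$ on $[0,r_0]$ and $\chi\equiv 1$ on $[2r_0,\infty)$. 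By construction $\varphi$ is smooth, positive (for $r_0$ large enough), equal to $c$ on $\{|x|\le r_0\}\cup(M\setminus\mathcal{E})$, and equal to $1+\epsilon|x|^{-(n-2)}$ on $\{|x|\ge 2r_0\}$, giving item (3) for free.

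For item (1), I multiply the Schwarzschild factor $\psi = 1 + (m/2)|x|^{-(n-2)}$ by $\varphi$ on the end and observe
\[
\varphi\,\psi = 1 + \tfrac{\tilde m/2}{|x|^{n-2}} + O(|x|^{-2(n-2)}),
\]
with $\tilde m = m+2\epsilon < 0$; the error $O(|x|^{-2(n-2)})$ is of class $C^{2,\alpha}_{2(n-2)}\subset C^{2,\alpha}_{n-1}$ since $2(n-2)\ge n-1$ for $n\ge 3$. Combined with the hypothesis $g-\psi^{4/(n-2)}\delta\in C^{2,\alpha}_{n-1}$ and the multiplicative property of weighted H\"older spaces, this shows $\tilde g = \varphi^{4/(n-2)}g$ is asymptotically Schwarzschild of mass $\tilde m$. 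For item (2), on $\{|x|\ge 2r_0\}$ the formula
\[
R_{\tilde g} = \varphi^{-(n+2)/(n-2)}\Bigl(-4\tfrac{n-1}{n-2}\Delta_g\varphi + R_g\,\varphi\Bigr)
\]
together with \eqref{step1} gives
\[
-4\tfrac{n-1}{n-2}\Delta_g\varphi = -4\tfrac{n-1}{n-2}\,\epsilon\, m(n-2)^2|x|^{-(2n-2)} + O(|x|^{-(2n-1)}),
\]
whose leading coefficient $-4(n-1)(n-2)\epsilon m$ is strictly positive because $\epsilon>0$ and $m<0$. Together with $R_g\varphi\ge 0$ (by the scalar curvature hypothesis on $\mathcal{E}$ and positivity of $\varphi$), this gives $L_g\varphi>0$ as soon as $r_0$ is large enough for the $|x|^{-(2n-2)}$ leading term to dominate the $C^{1,\alpha}_{2n-1}$ error from Proposition \ref{table}.

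There is no real obstacle here; the main point is simply the sign observation that $m<0$ and $\epsilon>0$ combine to make the leading coefficient of $-\Delta_g\varphi$ positive, which is exactly the content of \eqref{step1}. The only care needed is (a) choosing $\epsilon$ as a fixed multiple of $|m|$ (rather than shrinking with $r_0$) so that the leading term dominates the weighted-H\"older error once $r_0$ is taken large, and (b) confirming that the multiplication $\varphi\,\psi$ produces an error of order $|x|^{-2(n-2)}$ compatible with the $C^{2,\alpha}_{n-1}$ definition of asymptotically Schwarzschild, which is automatic in dimension $n\ge 3$.
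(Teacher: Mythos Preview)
Your argument is correct for the lemma as stated: the ansatz $\varphi=1+\epsilon|x|^{-(n-2)}$ on $\{|x|\ge 2r_0\}$, glued to a constant by a generic radial cutoff, does produce a smooth positive conformal factor satisfying (1)--(3), and your use of \eqref{step1} to get strict positivity of $-\Delta_g\varphi$ at leading order is exactly the right mechanism.

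The paper proceeds slightly differently. Instead of a generic cutoff $\chi$, it writes $\varphi(x)=1+\zeta\bigl(-\tfrac{m}{4}|x|^{-(n-2)}\bigr)$ where $\zeta$ is chosen \emph{concave} with $\zeta'\ge 0$ and $\zeta''\le 0$, interpolating between $\zeta(t)=t$ for small $t$ and a constant for large $t$. The point of this specific shape is that the chain rule gives
\[
-\Delta_g\varphi=\tfrac{m}{4}\zeta''\,\bigl|\nabla|x|^{-(n-2)}\bigr|^2+\tfrac{m}{4}\zeta'\,\Delta_g|x|^{-(n-2)},
\]
and both summands are nonnegative (using $m<0$, $\zeta''\le 0$, $\zeta'\ge 0$, and $\Delta_g|x|^{-(n-2)}<0$ for $|x|$ large). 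Thus the paper obtains $-\Delta_g\varphi\ge 0$, and hence $R_{\tilde g}\ge 0$, on \emph{all} of $\mathcal E$, including the transition annulus $\{r_0\le |x|\le 2r_0\}$. Your generic cutoff cannot control the sign of $-\Delta_g\varphi$ there.

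This extra conclusion is not part of the lemma statement, but it is what the downstream application actually needs: Proposition~\ref{basicallyfinal} requires $R_{\tilde g}\ge 0$ on the full $D$-neighborhood of $U_1$, and since $U_1$ is a neighborhood of infinity the transition annulus lies inside it for large $r_0$. So your construction proves the lemma but would leave a gap when the lemma is invoked in the proof of Theorem~\ref{thm2}; the paper's concave-interpolation trick closes that gap for free.
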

\begin{proof}
By \eqref{step1},
\[\Delta_g|x|^{-(n-2)}=m(n-2)^2 |x|^{-(2n-2)}+O(|x|^{-(2n-1)}),\]
which shows the existence of a $\sigma_0>0$ such that $-\Delta_g|x|^{-(n-2)}>0$ for any $|x|\ge \sigma_0$. Let $t_0=-m/8r_0^{n-2}>0$ for $r_0\ge \sigma_0$. 

Let $\zeta$ be a smooth function on $(0,\infty)$ with the following properties:
\[\zeta(t)=\begin{cases}
t & \text{for $t<t_0$}\\
\tfrac 32 t_0 & \text{for $t>2t_0$}
\end{cases},\quad \zeta'\ge 0,\quad\text{and}\quad \zeta''\le 0.\]
On $\mathcal E$, we define $\varphi$ by
\[\varphi(x)=1+\zeta\left(-\frac{m}{4|x|^{n-2}}\right).\]
If $|x|\le r_0$, then $\varphi(x)=1+\frac 32 t_0=1+O(r_0^{-(n-2)})$ and we extend it by this constant to all of $M$. If $|x|\ge 2r_0$, then 
\[\varphi(x)=1-\frac{m}{4|x|^{n-2}}.\]
From this is is apparent that $\tilde g$ is asymptotically Schwarzschild with mass $\frac{m}{2}<0$. Finally, the scalar curvature is given by the usual formula \[R_{\tilde g}= \varphi^{-\frac{n+2}{n-2}}\left(-4\frac{n-1}{n-2}\Delta_g \varphi+R_g\varphi\right).\]
The Laplacian term has the correct sign, as is easily checked:
\[-\Delta_g\varphi(x) = \frac{m}{4}\zeta'' |\nabla |x|^{-(n-2)}|^2+\frac{m}{4}\zeta'\Delta_g|x|^{-(n-2)}\ge 0\]
everwhere, and $>0$ for $|x|\ge 2t_0$. 
\end{proof}

This allows us to remove the strict positivity assumption in Proposition \ref{basicallyfinal}.

\begin{proof}[Proof of Theorem \ref{thm2}]
We apply the conformal transformation of Lemma \ref{step1} to $(M,g)$ to bring it into the form of Proposition \ref{basicallyfinal}. Indeed, since $\overline{U_1}\setminus U_2$ is compact and $R_g$ is continuous, $R_g-\sigma(\rho,D)$ has a positive minimum there. It follows that if we choose $r_0$ sufficiently large in Lemma \ref{step1}, then $R_{\tilde g}>\sigma(\rho,D)$ on $\overline{U_1}\setminus U_2$ as well. Furthermore, distances will be distorted an arbitrarily small amount. So if the original mass is negative, we have a counterexample to Proposition \ref{basicallyfinal}. 
\end{proof}

\begin{proof}[Proof of Theorem \ref{thm1}]
If $(M,g)$ is complete and has nonnegative scalar curvature everywhere, apply Lemma \ref{step1} with some $r_0$ to make the scalar curvature strictly positive on the end. We may then take $U_i$ as in Remark \ref{rkbig} (4) sufficiently far out that the scalar curvature is strictly positive on the compact set $\overline{U_1}\setminus U_2$. Now $\rho$ is fixed and since for fixed $\rho$,
\[\lim_{D\to\infty}\sigma(\rho,D)=0,\]
we can find $D$ sufficiently large that $R_{\tilde g}>\sigma(D,\rho)$ on $\overline{U_1}\setminus U_2$. But $(M,g)$ is complete with nonnegative scalar curvature globally, so we satisfy all the hypotheses of Theorem \ref{thm2} with this choice of $\rho$ and $D$. It follows that the original mass is nonnegative.
\end{proof}

\begin{appendix}

\section{Curvature estimate in homogeneously regular manifolds}\label{app:A}

In this appendix we give an independent proof Lemma \ref{curvatureestimate}. 

\begin{defn}
Let $K\ge 0$ and $i_0>0$ be real numbers. We say that a complete Riemannian manifold $(M^n,g)$ is \emph{$(K,i_0)$-homogeneously regular} if its sectional curvatures are bounded in absolute value by $K$ and injectivity radius bounded below by $i_0$. In the case of a noncompact manifold, we use the same definition, with the understanding that we work at a distance at least $2i_0$ away from any incomplete points; see Definition \ref{incomplete}.
\end{defn}

In the rest of this section, $K$ and $i_0$ are fixed and all manifolds are assumed to be $(K,i_0)$-homogeneously regular. We work with geodesic balls $B_\rho(x_0)^g$ and use the phrase ``$\Sigma$ is a minimal/minimizing submanifold of $B_\rho^g(x_0)$" to mean that $\Sigma\subset B_\rho^g(x_0)$ and that $\partial\Sigma\subset\partial B_\rho^g(x_0)$. 

\begin{thm}\label{rescaledcurvatureestimate}
Let $2\le n\le 7$, $K\ge 0$, and $i_0>0$. There exist universal constants $\rho_0\in(0,i_0)$ and $C$ depending only on $n, K$, and $i_0$ with the following property. Let $(M^{n},g)$ be a $(K,i_0)$-homogeneously regular Riemannian manifold, $x_0\in M$, and $\rho\in (0,\rho_0)$. If $\Sigma$ is an area minimizing hypersurface in $B_\rho^g(x_0)$ with $x_0\in \Sigma$, then 
\[|A_\Sigma(x_0)|^2\le \frac{C}{\rho^2}.\]
\end{thm}

In our definition of homogeneously regular, we can only obtain coordinates in which the metric is $C^{1,\alpha}$-controlled, see Lemma \ref{HH} below. Therefore, it might seem like this is a regularity theorem for $C^{1,\alpha}$ parametric elliptic functionals, but G.\ De Philippis has pointed out that this is still essentially perturbative about the flat space regularity theory (due to the freedom of choosing $\rho_0$ very small). This can be formalized using the work of Tamanini \cite{Tamanini}. In this appendix, we give an alternative proof via a contradiction argument, in the spirit of White \cite{White05} (for an Allard-type result) and Simon \cite{SimonRemark} (for a universal density bound).

\begin{thm}[Allard-type theorem] \label{allard}
Let $n\in\Bbb N$, $K\ge 0$, and $i_0>0$. There exist universal constants $\rho_0\in(0,i_0)$, $\ve_0>0$, $C$, and $\tau>0$ depending only on $n, K$, and $i_0$ with the following property. Let $(M^{n},g)$ be a $(K,i_0)$-homogeneously regular Riemannian manifold, $x_0\in M$, $\rho\in (0,\rho_0)$, and $\Sigma$ a minimal hypersurface in $B_\rho^g(x_0)$ with $x_0\in \Sigma$. If
\[\Theta(x_0,\rho)\le 1+\ve_0,\]
then 
\[\sup_{\Sigma\cap B_{\tau\rho}^g(x_0)}|A|^2\le \frac{C}{\rho^2}.\]
\end{thm}

Note that this does not follow directly from Allard's theorem \cite{Allard} because it is not possible to bound the mean curvature of $\Sigma$ with respect to the underlying Euclidean structure.

\begin{prop}[Universal density bound]
\label{densitybound}
Let $2\le n\le 7$, $K\ge 0$, and $i_0>0$. There exists a universal constant $\delta_0\in(0,1)$ depending only on $n, K$, and $i_0$ with the following property. Let $(M^{n},g)$ be a $(K,i_0)$-homogeneously regular Riemannian manifold, $x_0\in M$, and $\rho\in (0,\rho_0)$. If $\Sigma$ is an area minimizing hypersurface in $B_\rho^g(x_0)$ with $x_0\in \Sigma$, then 
\[\Theta(x_0,\delta_0\rho)\le 1+\ve_0,\]
where $\ve_0$ is as in Theorem \ref{allard}. 
\end{prop}

Combining Theorems \ref{allard} and \ref{densitybound} immediately gives Theorem \ref{rescaledcurvatureestimate}. To prove these theorems, we recall a basic result about existence of harmonic coordinate charts. 

\begin{lem}[Hebey--Herzlich \cite{HH}]\label{HH}
Let $(M^n,g)$ be $(K,i_0)$-homogeneously regular and  $\alpha\in(0,1)$. There exists a constant $Q_0$ and a number $r_0$ (depending only on $K, i_0$, $n$, and $\alpha$) such that for any $x_0\in M$, there exists a harmonic coordinate chart $x^i$ on $B_{r_0}^g(x_0)$ sending $x_0$ to the origin and satisfying 
\begin{equation}
\label{quadform}
Q_0^{-1}\delta_{ij}\le g_{ij}\le Q_0g_{ij}\quad\text{as quadratic forms},\end{equation}
and
\begin{equation}
\label{derivativeHolder}
\|\partial_k g_{ij}\|_{C^{0,\alpha}(U)}\le Q_0,\end{equation}
where $U\subset \Bbb R^n$ is the domain of definition of the harmonic coordinates. 
\end{lem}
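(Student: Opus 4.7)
The plan, following Jost--Karcher and DeTurck--Kazdan, is to start from exponential normal coordinates (which exist and are uniformly controlled because of the injectivity radius and sectional curvature bounds) and then perturb them into harmonic coordinates, where the standard elliptic regularity theory can be applied directly to the metric components. The proof thus has two movements: produce good preliminary coordinates, then bootstrap regularity using the harmonic gauge.

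\textbf{Step 1 (Normal coordinates).} The injectivity-radius bound provides an exponential chart $y^i=(\exp_{x_0})^{-1}$ on $B^g_{i_0}(x_0)$. Rauch comparison with $|\sec|\le K$ yields two-sided bounds $c_1(K\rho^2)\delta_{ij}\le (g_y)_{ij}\le c_2(K\rho^2)\delta_{ij}$ on $B^g_\rho(x_0)$ for $\rho<\pi/(2\sqrt K)$, as well as $L^\infty$ control on the first derivatives of $(g_y)_{ij}$. Fixing $\rho_1$ universally small relative to $i_0$ and $1/\sqrt K$ makes these constants as close to unity as desired.

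\textbf{Step 2 (Harmonic perturbation).} On $B^g_{\rho_1}(x_0)$ solve the Dirichlet problems $\Delta_g x^k=0$ with boundary data $x^k|_{\partial}=y^k$. Each equation is uniformly elliptic with coefficients $g^{ij}$ that are $C^0$-close to $\delta^{ij}$ by Step 1, so standard $C^{1,\alpha}$ elliptic estimates produce a solution and control $\|x-y\|_{C^1}$ by a quantity that can be made small. Consequently the $x^k$ assemble into a $C^{1,\alpha}$ diffeomorphism onto an open set $U\subset\mathbb R^n$ on a concentric ball of universal radius $r_0$.

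\textbf{Step 3 (Harmonic equation and bootstrap).} In the resulting chart, use the classical identity
\[-\tfrac{1}{2}g^{kl}\partial_k\partial_l g_{ij}+Q_{ij}(g,\partial g)=\Ric_{ij},\]
where $Q_{ij}$ is quadratic in $\partial g$ with coefficients rational in $g$. Since $|\Ric|\le (n-1)K$ and the system is uniformly elliptic with coefficients already bounded in $C^0$, interior De Giorgi--Nash--Moser estimates first promote the $C^0$ bound on $g$ to $C^{0,\alpha}$; interior Schauder theory then yields the announced bound $\|\partial_k g_{ij}\|_{C^{0,\alpha}(U)}\le Q_0$. The quadratic-form estimate $Q_0^{-1}\delta_{ij}\le g_{ij}\le Q_0\delta_{ij}$ is preserved through the bootstrap by shrinking $r_0$ once more if needed, keeping Step~1's constants as close to the identity as we wish.

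\textbf{Main obstacle.} The delicate issue is ensuring that $r_0$ and $Q_0$ are genuinely \emph{universal}, depending only on $n$, $K$, $i_0$, and $\alpha$, and independent of the point $x_0$ and the particular manifold. This requires Step~1's comparison estimates to be sharp enough that the perturbation in Step~2 closes without absorbing constants which would degenerate as $K$ or $1/i_0$ grows. The traditional remedy is to rescale $g\mapsto Kg$ so that the problem reduces to a fixed universal scale; then every subsequent estimate is dimension-free. Combining this with the harmonicity bootstrap of Step~3 gives the stated universality of $r_0$ and $Q_0$.
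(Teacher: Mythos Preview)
The paper does not prove this lemma; it simply cites Hebey--Herzlich \cite{HH} and moves on. Your proposal, by contrast, supplies an outline of the standard argument (normal coordinates via Rauch comparison, perturbation to harmonic gauge, then elliptic bootstrap using the harmonic-coordinate Ricci identity), which is essentially the route taken in the cited reference and in the earlier work of Jost--Karcher and Anderson. So there is no conflict: you are filling in what the paper deliberately black-boxes.

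A couple of points where your sketch would need tightening if actually carried out. First, in Step~1 the claim of $L^\infty$ control on $\partial_k(g_y)_{ij}$ in normal coordinates is correct but not an immediate consequence of Rauch; it requires differentiating the Jacobi equation and invoking the curvature bound again (this is the Jost--Karcher estimate proper). Second, in Step~3 the equation $-\tfrac12 g^{kl}\partial_k\partial_l g_{ij}+Q_{ij}(g,\partial g)=\Ric_{ij}$ is a coupled system with a quadratic nonlinearity in $\partial g$, so a direct De~Giorgi--Nash--Moser application is not quite right; the usual route is $L^p$ theory to get $W^{2,p}$ and then Sobolev/Morrey to reach $C^{1,\alpha}$. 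These are standard refinements and do not affect the overall correctness of your strategy.
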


For a metric $g$, we define the area ratio at a point $x_0\in M$ and scale $r$ according to 
\[\Theta_\Sigma^g(x_0,r)=\frac{\mc H^{n-1}_g(\Sigma\cap B_r^g(x_0))}{\omega_n r^n}.\]
The following is easily proved using the Hessian comparison theorem \cite[page 234]{CMbook}.

\begin{lem}[Monotonicity formula]
On any minimal hypersurface in a $(K,i_0)$-homogeneously regular manifold,
\begin{equation}
    \frac{d}{dr}\left(e^{(n-1)\sqrt{K} r}\Theta(x_0,r)\right) \ge 0
\end{equation}
for $r< i_0$.
\end{lem}

Using this, we can bound the area ratio at nearby points; compare \cite[Lemma D.7]{Ecker}.

\begin{lem}\label{moving} Assume the hypotheses of Theorem \ref{allard} and also let $\ve>0$. Then there exist constants $\beta, \eta>0$ and $\rho_0\in (0,\min\{r_0,i_0\})$ (depending only on $n,K,i_0$, and $\ve$ and where $r_0$ is the harmonic radius from Lemma \ref{HH} with an otherwise arbitrary choice of $\alpha$) with the property that  if
\[\Theta(x_0,\rho)\le 1+\ve\] 
for some $\rho\in (0,\rho_0)$, then
\[1-\eta\le \Theta(y,\sigma)\le 1+2\ve\]
for every $y\in B^g_{\beta\rho}(x_0)\cap \Sigma$ and $\sigma<(1-\beta)\rho$. With all other parameters fixed, $\lim_{\ve\to 0}\beta=\lim_{\ve\to 0}\eta=\lim_{\ve\to 0}\rho_0=0$. %Furthermore, there exists a function $\eta=\eta(\ve)$ with $\lim_{\ve\to 0}\eta =0$ such that for any $y\in B^g_{\beta\rho}(x_0)$ and $\sigma<(1-\beta)\rho$,
%\[\Theta(y,\sigma)\ge 1-\eta(\ve).\]
\end{lem}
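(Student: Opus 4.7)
The plan is to derive both bounds directly from the monotonicity formula; the harmonic coordinates of Lemma \ref{HH} play no role in the proof beyond fixing the upper constraint $\rho_0<r_0$ that will be used elsewhere in the section. The essential observation is that on scales small relative to $K^{-1/2}$, the exponential weight $e^{(n-1)\sqrt{K}r}$ in the monotonicity formula is close to $1$, so density behaves almost as it does in Euclidean space.

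For the upper bound I would fix $y\in B^g_{\beta\rho}(x_0)\cap\Sigma$ and write $d:=d_g(x_0,y)<\beta\rho$. Setting $\rho':=\rho-d>(1-\beta)\rho$, the triangle inequality gives $B^g_{\rho'}(y)\subset B^g_\rho(x_0)$, whence
\[
\Theta(y,\rho')\le\left(\frac{\rho}{\rho'}\right)^{n}\Theta(x_0,\rho)\le(1-\beta)^{-n}(1+\ve).
\]
Since $\sigma<(1-\beta)\rho\le\rho'$, applying monotonicity at $y$ from scale $\sigma$ up to $\rho'$ gives
\[
\Theta(y,\sigma)\le e^{(n-1)\sqrt{K}\rho_0}(1-\beta)^{-n}(1+\ve),
\]
which is $\le 1+2\ve$ once $\beta$ and $\rho_0$ are chosen sufficiently small in terms of $\ve$.

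For the lower bound, note that $\Sigma$ is a smooth minimal hypersurface and hence has a tangent plane at each interior point $y$, so $\lim_{r\to 0^{+}}\Theta(y,r)=1$. Monotonicity then yields
\[
e^{(n-1)\sqrt{K}\sigma}\Theta(y,\sigma)\ge\Theta(y,0^{+})\ge 1,
\]
and therefore $\Theta(y,\sigma)\ge e^{-(n-1)\sqrt{K}\sigma}\ge e^{-(n-1)\sqrt{K}\rho_0}=:1-\eta$.

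The behavior as $\ve\to 0$ is forced by the construction: to arrange the upper-bound inequality, $\rho_0$ and $\beta$ must shrink to $0$ linearly in $\ve$, and then $\eta=O(\rho_0)$ automatically tends to zero. There is no genuine obstacle in this argument; the content is the monotonicity formula combined with an elementary triangle-inequality inclusion of balls. The only point deserving care is justifying $\Theta(y,0^{+})=1$ in the lower bound, which is immediate since $y$ is an interior regular point of the smooth hypersurface $\Sigma$.
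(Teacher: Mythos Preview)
Your argument is correct and follows essentially the same route as the paper: both bounds come directly from the monotonicity formula together with the triangle-inequality inclusion $B^g_{\rho'}(y)\subset B^g_\rho(x_0)$ (the paper simply fixes $\rho'=(1-\beta)\rho$ rather than the sharper $\rho'=\rho-d$), and the lower bound is exactly the paper's $\Theta(y,\sigma)\ge e^{-(n-1)\sqrt K\,\sigma}$. Your explicit justification that $\Theta(y,0^{+})=1$ because $y$ is a smooth interior point is a helpful addition that the paper leaves implicit.
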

\begin{proof}
Using the monotonicity at $y\in B^g_{\beta}(x_0)$ as well as the inclusion $B_{(1-\beta)\rho}(y)\subset B_\rho(x_0)$ we estimate
\begin{align*}
  \Theta(y,\sigma)&\le e^{(n-1)\sqrt K((1-\beta) \rho-\sigma )} \Theta(y,(1-\beta)\rho)\\
    &\le  \frac{e^{(n-1)\sqrt K((1-\beta)\rho -\sigma )}}{(1-\beta)^n}(1+\ve) \\
    &\le 1+2\ve
\end{align*}
for $\beta$ and $\rho_0$ well chosen.

For the lower bound, we note that monotonicity implies 
\[\Theta(y,\sigma)\ge  e^{-(n-1)\sqrt K\sigma}.\]
If $\sigma$ is sufficiently small, then we can write this as 
\[\Theta(y,r)\ge 1-\eta(\ve).\qedhere\]
\end{proof}

\begin{proof}[Proof of Theorem \ref{allard}]
By Lemma \ref{moving} it suffices to show that there exist constants $\ve_0$ and $C$ such that 
\[\sup_{y\in B_{\beta\rho}^g(x_0)\cap \Sigma,\sigma <(1-\beta)\rho}\Theta_\Sigma^g(y,\sigma)\le 1+\ve_0\] implies
\[\sup_{\sigma<\beta\rho}\left(\sigma^2\sup_{B_{\beta\rho-\sigma}(x_0)}|A_\Sigma|^2\right)\le C,\]
where $\rho<\rho_0(\ve_0)$. Indeed, if this is true we may take $\tau=\frac \beta 2$ and $\sigma=\tau \rho$.

Arguing by contradiction, we infer the existence of $(K,i_0)$-homogeneously regular manifolds $(M^n_j,g_j,x_j)$, numbers $\ve_j\to 0$, minimal hypersurfaces $\Sigma_j\subset B^{g_j}_{\rho_j}(x_j)$, $\rho_j\in (0,\rho_0(\ve_j))$, such that 
\begin{equation}\label{z}\sup_{y\in B^{g_i}_{\beta_j\rho_j}(x_j)\cap \Sigma_j,\sigma<(1-\beta_j)\rho_j}\Theta_j(y,\sigma)\le 1+\ve_j,\end{equation}
but
\[\gamma_j^2:= \sup_{\sigma<\beta_j\rho_j}\left(\sigma^2\sup_{B_{\beta_j\rho_j-\sigma}(x_j)\cap\Sigma_j}\right)\to\infty.\]
We find $\sigma_j<\beta_j\rho_j$ and $y_j\in \overline B_{\beta_j\rho_j-\sigma_j}(x_j)$ such that 
\[\gamma_j^2 =\sigma_j^2\sup_{ B_{\beta_j\rho_j-\sigma_j}(x_j)\cap\Sigma_j}|A_j|^2=\sigma_j^2|A_j(y_j)|^2.\]
From the definitions, we see
\[\sup_{B_{\beta_j\rho_j-\frac{\sigma_j}{2}}(x_j)\cap \Sigma_j}|A_j|^2\le 4|A_j(y_j)|^2\]
and by the inclusion $B_{\frac{\sigma_j}{2}}(y_j)\subset B_{\beta_j\rho_j-\frac{\sigma_j}{2}}(x_j)$,
\[\sup_{B_\frac{\sigma_j}{2}(y_j)}|A_j|^2 \le 4|A_j(y_j)|^2.\]

By Lemma \ref{HH} and our choice of $\rho_0$, we can introduce harmonic coordinates $y_j^i$ on $B_{\frac{\sigma_j}{2}}^{g_j}(y_j)$ (which we can now view as a subset of $\Bbb R^n$) sending $y_j$ to $0$ and satisfying \eqref{quadform} and \eqref{derivativeHolder}.  Let $\lambda_j=|A_j(y_j)|^{-1}=o(j)$ and rescale $(B^{g_j}_{\frac{\sigma_j}{2}}(y_j),g_j)$ by $\lambda_j$. Since $\frac{\sigma_j}{2}\lambda_j^{-1}\to \infty,$ $\bigcup_{j}B^{g_j}_{\frac{\sigma_j}{2}}(y_j)=\Bbb R^n$. Furthermore, the components of the rescaled metrics are given by $(g_j)_{kl}(\frac{y}{\lambda_j})$, so by \eqref{quadform} and \eqref{derivativeHolder} (sub-)converge in $C^{1,\alpha}_\loc(\Bbb R^n)$ to the Euclidean metric.  By essentially the same calculation leading to Lemma \ref{comparison}, Proposition \ref{bigeqn}, and Schauder theory \cite[Chapter 6]{GT}, the rescaled hypersurfaces $\lambda_j^{-1}\Sigma_j$ (sub-)converge in $C^{2,\alpha}_\loc$ to a complete, smooth minimal hypersurface in $\Bbb R^n$. (The coefficients of the minimal graph-type equations satisfied by the $\lambda_j^{-1}\Sigma_j$'s converge in $C^{0,\alpha}_\loc$ to their Euclidean values, which suffices for this application of Schauder theory.) The limiting hypersurface $\Sigma$ has $|A(0)|=1$. 

Finally, we need to show that $\Theta_\Sigma(0,R)=1$ for every $R$. This follows from the upper bound \eqref{z} and the lower bound in Lemma \ref{moving}. Therefore $\Sigma$ is a cone, but as it is smooth, it must be a hyperplane. This contradicts $|A(0)|=1$.
\end{proof}

\begin{proof}[Proof of Proposition \ref{densitybound}]
We argue again by contradiction. There exist Riemannian manifolds $(M^n_j,g_j,x_j)$, numbers $\delta_j\to 0$, and area minimizing hypersurfaces $\Sigma_j\subset B_{\rho_j}^{g_j}(x_j)$ with $x_j\in \Sigma_j$ such that 
\[\Theta_j(x_j,\delta_j\rho_j)\ge 1+\ve_0.\]
By the monotonicity formula,
\[\Theta_j(x_j,\rho)\ge e^{-(n-1)\sqrt K(1-\delta_j)\rho_j}(1+\ve_0)\ge 1+\frac{\ve_0}{2}\]
for any $\rho\in (\delta_j\rho_j,\rho_j)$, after possibly adjusting $\rho_0(\ve_0)$. We now rescale with scale factor $\lambda_j=(\delta_j\rho_j)^{-1/2}=o(j)$. By the same argument as in the proof of Theorem \ref{allard}, the rescaled metrics will (sub-)converge (in harmonic coordinates) to the Euclidean metric. By a slight modification of the proof of \cite[Theorem 34.5]{Simon} (using the fact that the area functionals with respect to the rescaled metrics are converging to the flat one in the obvious way), one can show that $\lambda_j^{-1}\Sigma_j$ converge to some (Euclidean) area minimizing current $T$ in the weak topology and as varifolds. Furthermore, $T$ will be multiplicity one and is carried by a smooth hypersurface $\Sigma$ (here we use $n\le 7$). By varifold convergence, we have 
\[\Theta_\Sigma(0,R)\ge 1+\frac{\ve_0}{2}\]
for every $R>0$. But $\Sigma$ is a smooth hypersurface, so its $(n-1)$-density at the origin is 1. This is a contradiction. 
\end{proof}

\end{appendix}

\bibliographystyle{acm}
\bibliography{PMTincomplete}

\begin{thebibliography}{10}

\bibitem{Allard}
{\sc Allard, W.~K.}
\newblock On the first variation of a varifold.
\newblock {\em Ann. of Math. (2) 95\/} (1972), 417--491.

\bibitem{Besse}
{\sc Besse, A.~L.}
\newblock {\em Einstein manifolds}.
\newblock Classics in Mathematics. Springer-Verlag, Berlin, 2008.
\newblock Reprint of the 1987 edition.

\bibitem{CGT}
{\sc Cheeger, J., Gromov, M., and Taylor, M.}
\newblock Finite propagation speed, kernel estimates for functions of the
  {L}aplace operator, and the geometry of complete {R}iemannian manifolds.
\newblock {\em J. Differential Geometry 17}, 1 (1982), 15--53.

\bibitem{CL20}
{\sc Chodosh, O., and Li, C.}
\newblock Generalized soap bubbles and the topology of manifolds with positive
  scalar curvature.
\newblock {\em \emph{arXiv:2008.11888}\/} (2020).

\bibitem{CMbook}
{\sc Colding, T.~H., and Minicozzi, II, W.~P.}
\newblock {\em A course in minimal surfaces}, vol.~121 of {\em Graduate Studies
  in Mathematics}.
\newblock American Mathematical Society, Providence, RI, 2011.

\bibitem{DTK}
{\sc DeTurck, D.~M., and Kazdan, J.~L.}
\newblock Some regularity theorems in {R}iemannian geometry.
\newblock {\em Ann. Sci. \'{E}cole Norm. Sup. (4) 14}, 3 (1981), 249--260.

\bibitem{DS02}
{\sc Duzaar, F., and Steffen, K.}
\newblock Optimal interior and boundary regularity for almost minimizers to
  elliptic variational integrals.
\newblock {\em J. Reine Angew. Math. 546\/} (2002), 73--138.

\bibitem{Ecker}
{\sc Ecker, K.}
\newblock {\em Regularity theory for mean curvature flow}, vol.~57 of {\em
  Progress in Nonlinear Differential Equations and their Applications}.
\newblock Birkh\"{a}user Boston, Inc., Boston, MA, 2004.

\bibitem{EHLS}
{\sc Eichmair, M., Huang, L.-H., Lee, D.~A., and Schoen, R.}
\newblock The spacetime positive mass theorem in dimensions less than eight.
\newblock {\em J. Eur. Math. Soc. (JEMS) 18}, 1 (2016), 83--121.

\bibitem{EichmairMetzger}
{\sc Eichmair, M., and Metzger, J.}
\newblock On large volume preserving stable {CMC} surfaces in initial data
  sets.
\newblock {\em J. Differential Geom. 91}, 1 (2012), 81--102.

\bibitem{FCS80}
{\sc Fischer-Colbrie, D., and Schoen, R.}
\newblock The structure of complete stable minimal surfaces in {$3$}-manifolds
  of nonnegative scalar curvature.
\newblock {\em Comm. Pure Appl. Math. 33}, 2 (1980), 199--211.

\bibitem{GT}
{\sc Gilbarg, D., and Trudinger, N.~S.}
\newblock {\em Elliptic partial differential equations of second order},
  second~ed., vol.~224 of {\em Grundlehren der Mathematischen Wissenschaften
  [Fundamental Principles of Mathematical Sciences]}.
\newblock Springer-Verlag, Berlin, 1983.

\bibitem{G18}
{\sc Gromov, M.}
\newblock Metric inequalities with scalar curvature.
\newblock {\em GAFA 28}, 6 (2018), 645--726.

\bibitem{GromovFour}
{\sc Gromov, M.}
\newblock Four lectures on scalar curvature.
\newblock {\em \emph{arXiv:1908.10612v4}\/} (2020).

\bibitem{G20}
{\sc Gromov, M.}
\newblock No metrics with positive scalar curvature on aspherical
  $5$-manifolds.
\newblock {\em \emph{arXiv:2009.05332}\/} (2020).

\bibitem{GLcsum}
{\sc Gromov, M., and Lawson, Jr., H.~B.}
\newblock The classification of simply connected manifolds of positive scalar
  curvature.
\newblock {\em Ann. of Math. (2) 111}, 3 (1980), 423--434.

\bibitem{HH}
{\sc Hebey, E., and Herzlich, M.}
\newblock Harmonic coordinates, harmonic radius and convergence of {R}iemannian
  manifolds.
\newblock {\em Rend. Mat. Appl. (7) 17}, 4 (1997), 569--605 (1998).

\bibitem{Kuwert}
{\sc Kuwert, E.~C.}
\newblock {\em \emph{Der {M}inimalfl\"achenbeweis des {P}ositive {E}nergy
  {T}heorem}}.
\newblock Diplomarbeit, Universit\"at Bonn, 1990.

\bibitem{Lee}
{\sc Lee, D.}
\newblock {\em Geometric Relativity}.
\newblock \emph{Graduate Studies in Mathematics, Vol. 201, AMS}, 2019.

\bibitem{LUY20}
{\sc Lesourd, M., Unger, R., and Yau, S.-T.}
\newblock Positive scalar curvature on noncompact manifolds and the {Liouville}
  theorem.
\newblock {\em \emph{arXiv:2009.12618}\/} (2020).

\bibitem{L99}
{\sc Lohkamp, J.}
\newblock Scalar curvature and hammocks.
\newblock {\em Math. Ann. 313}, 3 (1999), 385--407.

\bibitem{L16}
{\sc Lohkamp, J.}
\newblock The higher dimensional positive mass theorem {I}.
\newblock {\em \emph{arXiv:math/0608795}\/} (2016).

\bibitem{L17}
{\sc Lohkamp, J.}
\newblock The higher dimensional positive mass theorem {II}.
\newblock {\em \emph{arXiv:1612.07505}\/} (2017).

\bibitem{Maggi}
{\sc Maggi, F.}
\newblock {\em Sets of finite perimeter and geometric variational problems},
  vol.~135 of {\em Cambridge Studies in Advanced Mathematics}.
\newblock Cambridge University Press, Cambridge, 2012.

\bibitem{Meyers}
{\sc Meyers, N.}
\newblock An expansion about infinity for solutions of linear elliptic
  equations.
\newblock {\em J. Math. Mech. 12\/} (1963), 247--264.

\bibitem{Petersen}
{\sc Petersen, P.}
\newblock {\em Riemannian geometry}, third~ed., vol.~171 of {\em Graduate Texts
  in Mathematics}.
\newblock Springer, Cham, 2016.

\bibitem{Richard}
{\sc Richard, T.}
\newblock On the 2-systole of stretched enough positive scalar curvature
  metrics on {$\Bbb S^2\times\Bbb S^2$}.
\newblock {\em SIGMA Symmetry Integrability Geom. Methods Appl. 16\/} (2020),
  Paper No. 136, 7.

\bibitem{SchoenPMT}
{\sc Schoen, R.}
\newblock Variational theory for the total scalar curvature functional for
  {R}iemannian metrics and related topics.
\newblock In {\em Topics in calculus of variations ({M}ontecatini {T}erme,
  1987)}, vol.~1365 of {\em Lecture Notes in Math.} Springer, Berlin, 1989,
  pp.~120--154.

\bibitem{SSstable}
{\sc Schoen, R., and Simon, L.}
\newblock Regularity of stable minimal hypersurfaces.
\newblock {\em Comm. Pure Appl. Math. 34}, 6 (1981), 741--797.

\bibitem{SSelliptic}
{\sc Schoen, R., and Simon, L.}
\newblock A new proof of the regularity theorem for rectifiable currents which
  minimize parametric elliptic functionals.
\newblock {\em Indiana Univ. Math. J. 31}, 3 (1982), 415--434.

\bibitem{SY79PMT}
{\sc Schoen, R., and Yau, S.~T.}
\newblock On the proof of the positive mass conjecture in general relativity.
\newblock {\em Comm. Math. Phys. 65}, 1 (1979), 45--76.

\bibitem{SY81ii}
{\sc Schoen, R., and Yau, S.~T.}
\newblock The energy and the linear momentum of space-times in general
  relativity.
\newblock {\em Comm. Math. Phys. 79}, 1 (1981), 47--51.

\bibitem{SY81i}
{\sc Schoen, R., and Yau, S.~T.}
\newblock Proof of the positive mass theorem. {II}.
\newblock {\em Comm. Math. Phys. 79}, 2 (1981), 231--260.

\bibitem{SY82}
{\sc Schoen, R., and Yau, S.-T.}
\newblock Complete three dimensional manifolds with positive ricci and scalar
  curvature.
\newblock 209--228.

\bibitem{SY88}
{\sc Schoen, R., and Yau, S.-T.}
\newblock Conformally flat manifolds, {K}leinian groups and scalar curvature.
\newblock {\em Invent. Math. 92}, 1 (1988), 47--71.

\bibitem{SYbook}
{\sc Schoen, R., and Yau, S.-T.}
\newblock {\em Lectures on Differential Geometry}.
\newblock International Press, 1994.

\bibitem{SY17}
{\sc Schoen, R., and Yau, S.-T.}
\newblock Positive scalar curvature and minimal hypersurface singularities.
\newblock {\em \emph{arXiv:1704.05490}\/} (2017).

\bibitem{SimonRemark}
{\sc Simon, L.}
\newblock Remarks on curvature estimates for minimal hypersurfaces.
\newblock {\em Duke Math. J. 43}, 3 (1976), 545--553.

\bibitem{Simon}
{\sc Simon, L.}
\newblock {\em Lectures on geometric measure theory}, vol.~3 of {\em
  Proceedings of the Centre for Mathematical Analysis, Australian National
  University}.
\newblock Australian National University, Centre for Mathematical Analysis,
  Canberra, 1983.

\bibitem{Tamanini}
{\sc Tamanini, I.}
\newblock Boundaries of {C}accioppoli sets with {H}\"{o}lder-continuous normal
  vector.
\newblock {\em J. Reine Angew. Math. 334\/} (1982), 27--39.

\bibitem{White05}
{\sc White, B.}
\newblock A local regularity theorem for mean curvature flow.
\newblock {\em Ann. of Math. (2) 161}, 3 (2005), 1487--1519.

\bibitem{Witten}
{\sc Witten, E.}
\newblock A new proof of the positive energy theorem.
\newblock {\em Comm. Math. Phys. 80}, 3 (1981), 381--402.

\bibitem{Y01}
{\sc Yau, S.~T.}
\newblock Geometry of three manifolds and existence of black hole due to
  boundary effect.
\newblock {\em Adv. Theor. Math. Phys. 5}, 4 (2001), 755--767.

\bibitem{Z20}
{\sc Zhu, J.}
\newblock Rigidity results for complete manifolds with nonnegative scalar
  curvature.
\newblock {\em \emph{arXiv:2008.07028}\/} (2020).

\bibitem{Z21}
{\sc Zhu, J.}
\newblock Width estimate and doubly warped product.
\newblock {\em Trans. Amer. Math. Soc. 374}, 2 (2021), 1497--1511.

\end{thebibliography}

\end{document}